\newtheorem{theorem}{Theorem}[section]
\newtheorem{corollary}[theorem]{Corollary}
\newtheorem{proposition}[theorem]{Proposition}
\newtheorem{lemma}[theorem]{Lemma}
\newtheorem{conjecture}[theorem]{Conjecture}
\theoremstyle{definition}
\newtheorem{definition}[theorem]{Definition}
\newtheorem{remark}[theorem]{Remark}
\theoremstyle{property}
\DeclareFontFamily{OT1}{rsfs}{}
\DeclareFontShape{OT1}{rsfs}{n}{it}{<-> rsfs10}{}
\DeclareMathAlphabet{\curly}{OT1}{rsfs}{n}{it}
\newcommand\I{\mathcal I}
\renewcommand\O{\mathcal O}
\newcommand\PP{\mathbb P}
\newcommand\cA{\mathcal A}
\newcommand\E{\mathbb E}
\newcommand\C{\mathbb C}
\newcommand\sfT{\mathsf T}
\newcommand\FF{\mathbb F}
\newcommand\sfZ{\mathsf Z}
\newcommand\Q{\mathbb Q}
\newcommand\Z{\mathbb Z}
\newcommand\cZ{\mathcal Z}
\newcommand\Eu{\mathrm{Eu}}
\newcommand\Res{\mathrm{Res}}
\newcommand\inst{\mathrm{inst}}
\newcommand\mono{\mathrm{mono}}
\newcommand\s{\mathfrak s}
\newcommand\SU{\mathrm{SU}}
\newcommand\vd{\mathrm{vd}}
\newcommand\vir{\mathrm{vir}}
\newcommand\SW{\mathrm{SW}}
\newcommand\VW{\mathrm{VW}}
\newcommand\td{\mathrm{td}}
\newcommand\rk{\operatorname{rk}}
\newcommand\tr{\operatorname{tr}}
\newcommand\ch{\operatorname{ch}}
\renewcommand\hom{\mathcal{H}{\it{om}}}
\newcommand\Hilb{\operatorname{Hilb}}
\newcommand\INTO{\ar@{^{(}->}[r]}
\DeclareRobustCommand{\SkipTocEntry}[4]{}
\begin{document}
\title[Refined $\SU(3)$ Vafa-Witten invariants and modularity]{Refined $\SU(3)$ Vafa-Witten invariants and modularity}
\author[L.~G\"ottsche and M.~Kool]{Lothar G\"ottsche and Martijn Kool}
\maketitle

\vspace{-0.8cm}

\begin{abstract}
We conjecture a formula for the refined $\SU(3)$ Vafa-Witten invariants of any smooth surface $S$ satisfying $H_1(S,\Z) = 0$ and $p_g(S)>0$. The unrefined formula corrects a proposal by Labastida-Lozano and involves unexpected algebraic expressions in modular functions. We prove that our formula satisfies a refined $S$-duality modularity transformation.

We provide evidence for our formula by calculating virtual $\chi_y$-genera of moduli spaces of rank 3 stable sheaves on $S$ in examples using Mochizuki's formula. Further evidence is based on the recent definition of refined $\mathrm{SU}(r)$ Vafa-Witten invariants by Maulik-Thomas and subsequent calculations on nested Hilbert schemes by Thomas (rank 2) and Laarakker (rank 3). 
\end{abstract}
\thispagestyle{empty}

\vspace{-0.2cm}

\section{Introduction} \label{intro}

\subsection{Physics background} 

In 1994, C.~Vafa and E.~Witten proposed tests for $S$-duality of $N=4$ supersymmetric Yang-Mills theory on a real 4-manifold $M$ \cite{VW}. This theory involves coupling constants $\theta, g$ combined as follows
$$
\tau := \frac{\theta}{2 \pi} + \frac{4 \pi i}{g^2}. 
$$
$S$-duality predicts that the transformation $\tau \mapsto -1/\tau$ maps the partition function for gauge group $G$ to the partition function with Langlands dual gauge group $^{L}G$. Vafa-Witten consider $M$ underlying a smooth projective
surface $S$ over $\C$ and $G = \mathrm{SU}(r)$. Furthermore, they consider a topological twist of the original theory. Roughly speaking, the partition function is the generating function of topological Euler characteristics of moduli spaces of
instantons and the transformation property implies that it is a modular form.

Referring in part to the mathematics literature \cite{Kly, Yos1, Yos2, Nak1, Nak2}, Vafa-Witten perform non-trivial modularity checks for $S = \PP^2, K3$, blow-ups, and ALE spaces. Most of these checks are for rank $r=2$. In \cite[Sect.~5]{VW}, they predict a formula for their invariants when $r=2$ and $S$ contains a smooth curve in its canonical linear system. This formula was generalized to arbitrary $S$ satisfying $p_g(S)>0$ by R.~Dijkgraaf, J.-S.~Park, and B. Schroers \cite{DPS}. At the time, there were no purely mathematical verifications of these ``general type'' formulae, due to the lack of a precise algebro-geometric definition of Vafa-Witten invariants.

\subsection{Tanaka-Thomas's definition} Recently, Y.~Tanaka and R.~P.~Thomas proposed an algebro-geometric definition of $\SU(r)$ Vafa-Witten invariants \cite{TT1, TT2}. Let $S$ be a smooth projective surface over $\C$ with polarization $H$. For any line bundle $L$, consider the moduli space of $H$-stable Higgs pairs
$$
N^{\perp} := N_S^H(r,L,c_2) = \big\{ (E,\phi) \, : \, \det E \cong L, \ \tr \phi = 0, \ c_2(E) =c_2 \big\}.
$$
Here $E$ is a rank $r$ torsion free sheaf, $\phi : E \rightarrow E \otimes K_S$ is a morphism, called the Higgs field, and the pair $(E,\phi)$ satisfies a (Gieseker) stability condition with respect to $H$. Assuming stability and semistability coincide, Tanaka-Thomas show that $N^{\perp}$ admits a (delicate) symmetric perfect obstruction theory. The $\C^*$ scaling action on the Higgs field lifts to $N^\perp$, which is therefore non-compact. However, the fixed locus $(N^\perp)^{\C^*}$ is compact and the $\SU(r)$ Vafa-Witten invariants are defined by virtual localization
\begin{equation} \label{defloc}
\VW_S^H(r,L,c_2) := \int_{[N_S^H(r,L,c_2)^{\C^*}]^{\vir}} \frac{1}{e(N^\vir)} \in \Q.
\end{equation}
There are two types of $\C^*$ fixed Higgs pairs $(E,\phi)$. Higgs pairs with $\phi = 0$ form a component isomorphic to the moduli space $M:=M_S^H(r,L,c_2)$ of $H$-stable rank $r$ torsion free sheaves $E$ with $\det E \cong L$ and $c_2(E) = c_2$. We call this the \emph{instanton branch} \cite{DPS}. The contribution of the instanton branch to \eqref{defloc} is
$$
(-1)^{\vd(M)} e^\vir(M) \in \Z,
$$ 
i.e.~the virtual Euler characteristic defined by B.~Fantechi and the first-named author in \cite{FG} (see also \cite{CFK}) and where
\begin{equation} \label{vd}
\vd = 2r c_2 - (r-1) c_1^2 - (r^2-1)\chi(\O_S)
\end{equation}
is the virtual dimension of $M$. We refer to the connected components of $(N^\perp)^{\C^*}$ consisting of Higgs pairs with $\phi \neq 0$ as the \emph{monopole branch} \cite{DPS}. Since Vafa-Witten invariants are defined by localization, the contribution of the monopole branch is in general a rational number. When $\deg K_S < 0$ or $K_S \cong \O_S$, there are no contributions from the monopole branch,
$M$ is smooth, and $e^{\vir}(M)$ equals the topological Euler characteristic $e(M)$ of $M$ \cite[Prop.~7.4]{TT1}.

Tanaka-Thomas's definition has been generalized in two directions:
\begin{itemize}
\item adding strictly $H$-semistable Higgs pairs \cite{TT2}, 
\item $y$-refinement $\VW_S^H(r,L,c_2,y)$ defined by D.~Maulik and Thomas \cite{MT} (see also \cite{Tho}).
\end{itemize}
The contribution of the instanton branch to $\VW_S^H(r,L,c_2,y)$ equals \cite{Tho}
\begin{equation*} \label{MTinstanton}
(-1)^{\vd(M)} \overline{\chi}_{-y}^{\vir}(M) := (-1)^{\vd(M)} y^{-\frac{\vd(M)}{2}} \chi_{-y}^{\vir}(M) \in \Z[y^{\pm \frac{1}{2}}],
\end{equation*}
which is the (normalized) virtual $\chi_y$-genus defined in \cite{FG}.

\subsection{Previous calculations} When $\deg K_S < 0$ or $K_S \cong \O_S$, Vafa-Witten invariants are Euler characteristics of smooth moduli spaces (assuming ``stable equals semistable''). Modularity of generating functions of Euler characteristics of smooth moduli spaces of sheaves has been verified by direct calculation in many examples; mostly for rank 2 (see references in \cite{GK1}). For some higher rank calculations, see \cite{BN, Koo, Man, Moz, Wei}.

Henceforth, $S$ is a smooth projective surface such that $b_1(S) = 0$ and $p_g(S) >0$. Except for $S = K3$ or an elliptic surface \cite{GH, Yos3}, until recently very few calculations of invariants of moduli spaces $M_S^H(r,L,c_2)$ were known. The following invariants were recently calculated for some examples of general type surfaces:
\begin{itemize}
\item $\chi_{y}^{\vir}(M_S^H(2,c_1,c_2))$ for roughly $c_2 \leq 7$, and $c_2 \leq 30$ when $y=-1$ \cite{GK1},\footnote{See \cite{GK2} for refinements to virtual elliptic genus / cobordism class.}
\item monopole contribution to $\VW_S^H(2,L,c_2)$ for $c_2 \leq 3$ in \cite{TT1}.
\end{itemize}
In fact, conjectural formulae exist in both cases following from (generalizations of) a formula from Vafa-Witten \cite[(5.38)]{VW}. See Remark \ref{rank2conj}. The above-mentioned calculations all match the conjectural formulae, which provides strong evidence that Tanaka-Thomas's definition is correct.

\subsection{Instanton branch}

Our first conjecture concerns the virtual $\chi_y$-genera of $M_S^H(3,c_1,c_2)$ when there are no rank 3 strictly Gieseker $H$-semistable sheaves on $S$ with Chern classes $c_1,c_2$. The rank 1 case was covered by \cite{GS} and, as just mentioned, a conjectural formula for the rank 2 case was proposed in \cite{GK1}.

Denote by $\SW(a)$ the Seiberg-Witten invariant of $S$ for class $a \in H^2(S,\Z)$.\footnote{We use Mochizuki's convention: $\SW(a) = \widetilde{\SW}(2a-K_S)$ with $\widetilde{\SW}(b)$ the usual Seiberg-Witten invariant in class $b$.} We refer to $a$ as a Seiberg-Witten basic class when $\SW(a) \neq 0$. 

The $A_2$-lattice consists of $\Z^2$ together with bilinear form $\langle v , w \rangle := v^t A w$ defined by
\begin{equation} \label{matrixA}
A:= \left(\begin{array}{cc} 2 & -1 \\ -1 & 2 \end{array}\right).
\end{equation}
We also need the dual lattice $A_2^\vee$ consisting of $\Z^2$ and  $\langle v , w \rangle^\vee := v^t A^\vee w$ where
\begin{equation} \label{matrixAinv}
A^\vee = A^{-1} =  \frac{1}{3} \left(\begin{array}{cc} 2 & 1 \\ 1 & 2 \end{array}\right).
\end{equation}
Let $\epsilon := e^{\frac{2 \pi i}{3}}$. The following theta functions feature in our conjectures
\begin{align*}
\Theta_{A_2,(0,0)}(x,y) :=& \, \sum_{v \in \Z^2} (x^2)^{\frac{1}{2} \langle v,v\rangle} e^{2 \pi i \langle v, (z,z) \rangle} \\
=&\, \sum_{(m,n) \in \Z^2} x^{2(m^2 -mn +n^2)} y^{m+n}, \\
\Theta_{A_2,(1,0)}(x,y) :=& \, \sum_{v \in \Z^2} (x^2)^{\frac{1}{2}\langle v+(\frac{1}{3},-\frac{1}{3}),v +(\frac{1}{3},-\frac{1}{3}) \rangle} e^{2 \pi i \langle v, (z,z) \rangle} \\
=& \, \sum_{(m,n) \in \Z^2} x^{2(m^2 -mn +n^2+m - n +\frac{1}{3})} y^{m+n}, \\
\Theta_{A_2^\vee,(0,0)}(x,y) :=& \, \sum_{v \in \Z^2} (x^6)^{\frac{1}{2} \langle v,v\rangle^\vee} e^{2 \pi i \langle v, (z,z) \rangle^\vee} \\
=& \, \sum_{(m,n) \in \Z^2} x^{2(m^2 +mn +n^2)} y^{m+n},  \\
\Theta_{A_2^\vee,(0,1)}(x,y) :=& \, \sum_{v \in \Z^2} (x^6)^{\frac{1}{2} \langle v,v\rangle^\vee} e^{2 \pi i \langle v, (z,z) + (1,-1) \rangle^\vee} \\
=& \,\sum_{(m,n) \in \Z^2} \epsilon^{m-n} x^{2(m^2 +mn +n^2)} y^{m+n},
\end{align*}
where $x = e^{\frac{\pi i \tau}{3}}$, $y = e^{2 \pi i z}$, $(\tau,z) \in \mathfrak{H} \times \C$ are the modular parameters (which feature later), and $\mathfrak{H}$ denotes the upper half plane. We also use the normalized Dedekind eta function $\overline{\eta}(x) = \prod_{n>0} (1-x^n)$. Furthermore, we abbreviate $\chi := \chi(\O_S)$, $K := K_S$, $b_i(S):=b_i$, $p_g:=p_g(S)$, and $e:=e(S) = \int_S c_2(S)$.
\begin{conjecture} \label{conj1}
Let $S$ be a smooth projective surface satisfying $b_1 = 0$ and $p_g>0$. Let $H, c_1, c_2$ be chosen such that there are no rank 3 strictly Gieseker $H$-semistable sheaves on $S$ with Chern classes $c_1,c_2$ and let $M:=M_S^H(3,c_1,c_2)$. Then $\overline{\chi}_{-y}^{\vir}(M)$ equals the coefficient of $x^{\vd(M)}$ of 
\begin{align*}
&9 \Bigg( \frac{1}{3 \prod_{n=1}^{\infty} (1-x^{2n})^{10}(1-x^{2n}y)(1-x^{2n} y^{-1})} \Bigg)^{\chi} \Bigg( \frac{\Theta_{A^\vee_2, (0,1)}(x,y)}{3 \overline{\eta}(x^6)^3} \Bigg)^{-K^2} \\
&\qquad \times \sum_{a,b} \SW(a) \, \SW(b) \, \epsilon^{(a-b)c_1} \, Z_{+}(x,y)^{ab} \, Z_{-}(x,y)^{(K-a)(K-b)}, 
\end{align*}
where the sum is over all $(a,b) \in H^2(S,\Z) \times H^2(S,\Z)$ and $Z_{\pm}(x,y)$ are the solutions to the following quadratic equation in $\zeta$
\begin{align*}
\zeta^2 - (Z(x,y)^2+3Z(x,y)Z(x,1)) \, \zeta +Z(x,y)+3Z(x,1) = 0,
\end{align*}
where $Z(x,y) := \frac{\Theta_{A^\vee_2, (0,0)}(x,y)}{\Theta_{A^\vee_2, (0,1)}(x,y)}$.
\end{conjecture}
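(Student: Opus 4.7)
My plan is to reduce the conjecture to a universal generating-series identity via Mochizuki's formula applied to rank $3$, and then determine the universal series by toric localization. Under the assumption that stable equals semistable for the chosen $(c_1,c_2)$, Mochizuki's wall-crossing expresses any integral against $[M_S^H(3,c_1,c_2)]^{\vir}$ as a sum over triples $(a_1,a_2,a_3)\in H^2(S,\Z)^3$ with $a_1+a_2+a_3=c_1$, weighted by $\SW(a_1)\SW(a_2)\SW(a_3)$ times a universal contribution from an integral over a product of nested Hilbert schemes of $S$. Combined with the universality principle of Ellingsrud--G\"ottsche--Lehn, this universal contribution must be of the form $U_1(x,y)^{\chi}U_2(x,y)^{K^2}U_3(x,y)^{a_ia_j}U_4(x,y)^{(K-a_i)(K-a_j)}\cdots$ for undetermined universal series in $x,y$ depending only on Chern numbers. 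The right-hand side of the conjecture has precisely this shape (with the third $\SW$-factor rearranged into the $\Z/3\Z$-centre phase $\epsilon^{(a-b)c_1}$), so the task reduces to matching universal series and carrying out a combinatorial reorganisation of the triple sum.

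\textbf{Explicit evaluation.} To determine the $U_i$, I would localise on $\C^2$ with its $(\C^*)^2$-action, reducing each universal series to a sum over triples of Young diagrams weighted by $SU(3)$ tangent weights. The $A_2$ root lattice enters naturally because the virtual tangent space at a torus-fixed point decomposes according to the roots of $\mathfrak{sl}_3$, and the weights of rank-$3$ representations live in $A_2^\vee$. Jacobi-triple-product manipulations should then identify the torus sums with the theta quotients built from $\Theta_{A_2,(0,0)}$, $\Theta_{A_2^\vee,(0,0)}$, $\Theta_{A_2^\vee,(0,1)}$ and $\overline{\eta}$ that appear in the conjecture. The two branches $Z_{\pm}$ are expected to arise from the two non-trivial orbits of the $\Z/3\Z$ centre acting on rank-$3$ spectral data; this group acts as the Galois group of the $SU(3)$ triple cover, which is the geometric explanation for why the generating function is algebraic of degree $2$ over $\Q(Z(x,y),Z(x,1))$.

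\textbf{Main obstacle.} The hardest step will be twofold. First, converting Mochizuki's triple sum $\sum_{a_1+a_2+a_3=c_1}\SW(a_1)\SW(a_2)\SW(a_3)(\cdots)$ into the unconstrained pair sum $\sum_{a,b}\SW(a)\SW(b)\,\epsilon^{(a-b)c_1}(\cdots)$ of the conjecture requires a genuinely new $A_2$-theta identity, analogous to but substantially more subtle than the $A_1$-identity implicit in the rank-$2$ case of \cite{GK1}. Second, one must establish the algebraic (as opposed to merely modular) structure of $Z_\pm$: the appearance of a quadratic equation whose coefficients are themselves ratios of $A_2^\vee$-theta functions is unusual in intersection theory on moduli of sheaves and strongly suggests a hidden geometric input (e.g.~the spectral cover of the $SU(3)$ Hitchin system, or a branched triple cover on the modular parameter side). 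In practice I would try to verify the identification by computing enough low-$c_2$ cases by Mochizuki's formula to pin down the algebraic function uniquely --- which seems to be the strategy by which the paper assembles its evidence --- while hoping that a conceptual proof eventually emerges from the spectral-cover perspective.
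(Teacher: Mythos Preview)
The statement you are trying to prove is a \emph{conjecture}; the paper does not prove it. What the paper does is precisely what you sketch in your final paragraph: it uses Mochizuki's rank $3$ formula to rewrite $\chi_{-y}^{\vir}(M)$ in terms of Seiberg--Witten invariants and integrals over $S^{[n_1]}\times S^{[n_2]}\times S^{[n_3]}$, proves (via \cite{EGL}) that these integrals are governed by eleven universal power series $A_1,\ldots,A_{11}$, computes the $A_i$ on $\PP^2$ and $\PP^1\times\PP^1$ by torus localization up to finite order in $(s_1,s_2,y,q)$, and then checks that the resulting numbers agree with the coefficient-extraction formula of the conjecture in a long list of examples. So your overall outline matches the paper's evidence-gathering methodology, but neither you nor the paper arrives at a proof.

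There is one concrete inaccuracy in your description of the reduction step. Mochizuki's rank $3$ formula (Theorem~\ref{mocthm}) is a sum over decompositions $c_1=a_1+a_2+a_3$ weighted by $\SW(a_1)\,\SW(a_2)$, \emph{not} by a triple product $\SW(a_1)\,\SW(a_2)\,\SW(a_3)$: the third class $a_3$ is determined by the constraint and carries no Seiberg--Witten weight. Consequently the ``main obstacle'' you identify is not the conversion of a constrained triple-$\SW$ sum into the unconstrained pair sum $\sum_{a,b}\SW(a)\SW(b)\epsilon^{(a-b)c_1}(\cdots)$. Both sides are already double sums in Seiberg--Witten basic classes; the genuine difficulty is that on the Mochizuki side the universal integrand still depends on the two equivariant parameters $s_1,s_2$ through an iterated residue, and one has no closed form for the universal functions $A_i(s_1,s_2,y,q)$ --- the paper only computes them numerically to bounded order. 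Your proposed identification via ``Jacobi-triple-product manipulations'' and the spectral-cover heuristic for $Z_\pm$ are interesting suggestions, but nothing like them appears in the paper, and carrying them out would require ideas well beyond what is currently available.
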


Let $S,H,c_1$ be chosen such that there exist no rank 3 strictly Gieseker $H$-semistable sheaves on $S$ with first Chern class $c_1$. We define 
\begin{equation} \label{instantongenfun}
\sfZ_{S,H,r,c_1}^{\inst}(q,y) := \, q^{-\frac{1}{2r}\chi + \frac{r}{24} K^2} \sum_{c_2} \overline{\chi}_{-y}^{\vir}(M_S^H(r,c_1,c_2)) \, q^{\frac{\vd}{2r}},
\end{equation}
which we refer to as the instanton contribution to the Vafa-Witten generating function and where $\vd$ is given by \eqref{vd}. The normalization becomes important in Section \ref{modintro} when we discuss modularity. Let $\phi_{-2,1}$ be the weak Jacobi form of weight $-2$ and index $1$ with Fourier expansion 
$$
\phi_{-2,1}(q,y) = (y^{\frac{1}{2}} - y^{-\frac{1}{2}})^2 \prod_{n=1}^{\infty} \frac{(1-q^ny)^2(1-q^n y^{-1})^2}{(1-q^n)^4}.
$$
Denote the discriminant modular form by $\Delta(q) = q \prod_{n>0} (1-q^n)^{24}$.
\begin{corollary} \label{corconj1}
Let $S$ be a smooth projective surface satisfying $b_1 = 0$ and $p_g>0$. Let $H, c_1$ be such that there are no rank 3 strictly Gieseker $H$-semistable sheaves on $S$ with first Chern class $c_1$. Assume Conjecture \ref{conj1} holds for $S,H,c_1$ and all $c_2$. Then we have
{\scriptsize{
\begin{align*}
\frac{\sfZ_{S,H,3,c_1}^{\inst}(q,y)}{(y^{\frac{1}{2}} - y^{-\frac{1}{2}})^{\chi}} =\, & 3\Bigg( \frac{1}{3\phi_{-2,1}(q^{\frac{1}{3}},y)^{\frac{1}{2}} \Delta(q^{\frac{1}{3}})^{\frac{1}{2}} } \Bigg)^{\chi} \Bigg(\frac{\Theta_{A_2^\vee,(0,1)}(q^{\frac{1}{6}},y)}{3\eta(q)^3}  \Bigg)^{-K^2} \\
&\qquad\qquad \times \sum_{a,b} \SW(a) \, \SW(b) \,\epsilon^{(a-b) c_1} \, Z_+(q^{\frac{1}{6}},y)^{ab} \, Z_-(q^{\frac{1}{6}},y)^{(K-a)(K-b)} \\
&+3\epsilon^{2c_1^2} \Bigg( \frac{1}{3\phi_{-2,1}(\epsilon^2 q^{\frac{1}{3}},y)^{\frac{1}{2}}  \Delta(\epsilon^2 q^{\frac{1}{3}})^{\frac{1}{2}} } \Bigg)^{\chi} \Bigg(\frac{\Theta_{A_2^\vee,(0,1)}(\epsilon q^{\frac{1}{6}},y)}{3\eta(q)^3}  \Bigg)^{-K^2} \\
&\qquad\qquad \times \sum_{a,b} \SW(a) \, \SW(b) \,\epsilon^{(a-b) c_1} \, Z_+(\epsilon q^{\frac{1}{6}},y)^{ab} \, Z_-(\epsilon q^{\frac{1}{6}},y)^{(K-a)(K-b)}  \\
&+3(-1)^\chi \epsilon^{c_1^2} \Bigg( \frac{1}{3\phi_{-2,1}(\epsilon q^{\frac{1}{3}},y)^{\frac{1}{2}} \Delta(\epsilon q^{\frac{1}{3}})^{\frac{1}{2}} } \Bigg)^{\chi} \Bigg(\frac{\Theta_{A_2^\vee,(0,1)}(\epsilon^2 q^{\frac{1}{6}},y)}{3\eta(q)^3}  \Bigg)^{-K^2} \\
&\qquad\qquad \times \sum_{a,b} \SW(a) \, \SW(b) \,\epsilon^{(a-b) c_1} \, Z_+(\epsilon^2 q^{\frac{1}{6}},y)^{ab} \, Z_-(\epsilon^2 q^{\frac{1}{6}},y)^{(K-a)(K-b)}. 
\end{align*}
}}
\end{corollary}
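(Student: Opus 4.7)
The plan is to substitute the formula of Conjecture~\ref{conj1} for $\overline{\chi}_{-y}^{\vir}(M)$ into the definition \eqref{instantongenfun} and then apply a standard cube-root-of-unity filter to extract the correct exponents.

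Let $F(x,y)$ denote the expression on the right-hand side of Conjecture~\ref{conj1} inside the coefficient extraction $[x^{\vd(M)}]$. From the explicit formula, $F$ is a power series in $x^{2}$: the bracketed theta-quotient factors involve only $x^{2n}$'s (and $\overline{\eta}(x^{6}) = \overline{\eta}((x^{2})^{3})$), while $Z_{\pm}(x,y)$ depend on $x$ only through $Z(x,y)$ and $Z(x,1)$, which are ratios of theta series in $x^{2}$. As $c_{2}$ ranges over $\Z$, $\vd(c_{2}) = 6c_{2} - 2c_{1}^{2} - 8\chi$ runs through an arithmetic progression of common difference $6$, and since $F$ contains only even powers of $x$, extracting its coefficients at these exponents is equivalent to a cube-root-of-unity filter:
$$\sum_{c_{2}\in\Z} [x^{\vd(c_{2})}]F(x,y)\cdot q^{\vd(c_{2})/6} \;=\; \tfrac{1}{3}\sum_{j=0}^{2} \epsilon^{-j(c_{1}^{2}+\chi)}\, F(\epsilon^{j} q^{1/6}, y),$$
using $\vd(c_{2})\equiv c_{1}^{2}+\chi\pmod 3$.

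Next I evaluate each summand $F(\epsilon^{j} q^{1/6},y)$ by rewriting its theta-quotient prefactor via the identities
\begin{align*}
\phi_{-2,1}(u,y)^{1/2}\Delta(u)^{1/2} &= u^{1/2}(y^{1/2}-y^{-1/2}) \prod_{n\geq 1}(1-u^{n} y)(1-u^{n} y^{-1})(1-u^{n})^{10},\\
\eta(q)^{3} &= q^{1/8}\,\overline{\eta}(q)^{3},
\end{align*}
specialised at $u = \epsilon^{2j}q^{1/3}$ and $q = (\epsilon^{j} q^{1/6})^{6}$. These convert the two bracketed theta-quotients of Conjecture~\ref{conj1} into the $\phi_{-2,1},\Delta,\eta,\Theta_{A_{2}^{\vee},(0,1)}$-combinations appearing in the corollary, at the cost of extracting a prefactor
$$\tau_{j}^{\chi}\, q^{\chi/6-K^{2}/8}\,(y^{1/2}-y^{-1/2})^{\chi},$$
where $\tau_{j} := (\epsilon^{2j}q^{1/3})^{1/2}/q^{1/6}$ is a chosen square root of $\epsilon^{2j}$. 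With the branch convention $\arg\in[0,2\pi)$ one computes $(\tau_{0},\tau_{1},\tau_{2})=(1,\epsilon,-\epsilon^{2})$. The power $q^{\chi/6-K^{2}/8}$ cancels against the normalization $q^{-\chi/6+K^{2}/8}$ in \eqref{instantongenfun}, and the factor $(y^{1/2}-y^{-1/2})^{\chi}$ matches the denominator on the left-hand side of the claim.

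Collecting all phases, the $j$-th summand carries the overall scalar
$$3\cdot\epsilon^{-j(c_{1}^{2}+\chi)}\cdot\tau_{j}^{\chi},$$
which for $(\tau_{0},\tau_{1},\tau_{2})=(1,\epsilon,-\epsilon^{2})$ evaluates (using $\epsilon^{3}=1$) to $3$, $3\epsilon^{2c_{1}^{2}}$, and $3(-1)^{\chi}\epsilon^{c_{1}^{2}}$ respectively — exactly the three coefficients of the corollary; the arguments $u=\epsilon^{2j}q^{1/3}$ and $x=\epsilon^{j} q^{1/6}$ match those inside $\phi_{-2,1}^{1/2}\Delta^{1/2}$ and inside $\Theta_{A_{2}^{\vee},(0,1)}$ and $Z_{\pm}$ in the three lines of the statement. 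The conceptual steps — substitution, cube-root filter, and rewriting in $\phi_{-2,1},\Delta,\eta$ — are standard; the main obstacle is the disciplined bookkeeping of fractional powers of $q$ and, especially, of the sign $\tau_{j}^{\chi}$ coming from the branch of $\sqrt{\epsilon^{2j}}$, which is precisely what produces the asymmetric factor $(-1)^{\chi}$ appearing in only the third line.
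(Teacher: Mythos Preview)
Your proof is correct and follows the same approach as the paper: apply a cube-root-of-unity filter to the formula of Conjecture~\ref{conj1} (after noting that it is a series in $x^2$, so extracting exponents congruent to $\vd \pmod 6$ reduces to a mod~$3$ filter), then set $x=q^{1/6}$ and absorb the $q^{\chi/6-K^2/8}$ prefactor into the normalization of \eqref{instantongenfun}. The paper's own proof is terser and simply writes $\sfZ^{\inst}_{S,H,3,c_1}(x,y)=\tfrac{1}{3}\sum_{k=0}^{2}\epsilon^{k(2c_1^2+8\chi)}\psi_{S,c_1}(\epsilon^k x,y)$ and says ``the result follows''; your explicit tracking of the branch $\tau_j=(\epsilon^{2j})^{1/2}$ and the resulting phases $3,\,3\epsilon^{2c_1^2},\,3(-1)^{\chi}\epsilon^{c_1^2}$ fills in exactly the bookkeeping the paper omits.
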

\begin{proof}
Define 
$$
\sfZ_{S,H,3,c_1}^{\inst}(x,y) := \sum_{c_2} \overline{\chi}_{-y}^{\vir}(M_S^H(3,c_1,c_2)) \, x^{\vd}.
$$
Denote the formula of Conjecture \ref{conj1} by $\psi_{S,c_1}(x,y) = \sum_{n \geq 0} \psi_n(y) \, x^n$. Then 
$$
\sfZ_{S,H,3,c_1}^{\inst}(x,y) = \sum_{n \equiv -2c_1^2 - 8\chi \mod 3} \psi_n(y) \, x^{n} = \sum_{k=0}^{2} \frac{1}{3}\epsilon^{k(2c_1^2+ 8\chi)} \psi_{S,c_1}(\epsilon^k x,y).
$$
The result follows by setting $x = q^{\frac{1}{6}}$ and multiplying by $q^{-\frac{1}{6} \chi + \frac{1}{8}K^2}$.
\end{proof}

\begin{remark} \label{rank2cor}
The rank 2 analog of Conjecture \ref{conj1} was given in \cite[Conj.~6.7]{GK1}. Assuming the absence of strictly Gieseker semistables sheaves, conjecturally
{\scriptsize{
\begin{align*}
\frac{\sfZ_{S,H,2,c_1}^{\inst}(q,y)}{(y^{\frac{1}{2}} - y^{-\frac{1}{2}})^{\chi}} =&\, 2\Bigg( \frac{1}{2\phi_{-2,1}(q^{\frac{1}{2}},y)^{\frac{1}{2}} \Delta(q^{\frac{1}{2}})^{\frac{1}{2}}} \Bigg)^{\chi} \Bigg( \frac{\theta_3(q,y)+\theta_2(q,y)}{2\eta(q)^2}\Bigg)^{-K^2} \\
&\qquad\qquad \times \sum_{a} \SW(a) \, (-1)^{ac_1} \, \Bigg( \frac{\theta_3(q,y)+\theta_2(q,y)}{\theta_3(q,y)-\theta_2(q,y)} \Bigg)^{aK} \\
&+2 i^{c_1^2} \Bigg( \frac{1}{2\phi_{-2,1}(-q^{\frac{1}{2}},y)^{\frac{1}{2}} \Delta(-q^{\frac{1}{2}})^{\frac{1}{2}}} \Bigg)^{\chi} \Bigg( \frac{\theta_3(q,y)+i\theta_2(q,y)}{2\eta(q)^2}\Bigg)^{-K^2} \\
&\qquad \qquad \times \sum_{a} \SW(a) \, (-1)^{ac_1} \, \Bigg( \frac{\theta_3(q,y)+i\theta_2(q,y)}{\theta_3(q,y)-i\theta_2(q,y)} \Bigg)^{aK},
\end{align*}
}}
where $i = \sqrt{-1}$ and 
\begin{align*}
\theta_3(q,y) = \sum_{n \in \Z} q^{n^2} y^n, \qquad \theta_2(q,y) = \sum_{n \in \Z+ \frac{1}{2}} q^{n^2} y^n.
\end{align*}
We note that $\theta_3(q,y)+\theta_2(q,y)$ is the theta function of the lattice $A_1^\vee$. 
This is a refinement of lines 2+3 of \cite[(5.38)]{VW}, which inspired this formula.
\end{remark}

\begin{remark} \label{strictlyss}
Let $S$ be any smooth projective surface satisfying $b_1=0$ and $p_g>0$. As mentioned earlier, for any $H,r,c_1$, Tanaka-Thomas define $\SU(r)$ Vafa-Witten invariants 
in the presence of Gieseker strictly $H$-semistable Higgs pairs \cite{TT2} (combined with \cite{Tho} for the $y$-refinement). 
We conjecture that the formulae of Corollary \ref{corconj1} and Remark \ref{rank2cor} also holds when there are strictly semistable sheaves. This expectation is based on the fact that Vafa-Witten's original formula \cite[(5.38)]{VW} should hold for \emph{any} $c_1$. However, we have done no verifications when strictly semistable sheaves are present. See \cite{TT2,Tho} for refined/unrefined calculations on $K3$ in the semistable case.
\end{remark}

In Section \ref{chiysec}, we verify Conjecture \ref{conj1}, modulo $x^N$ for some $N$, for:  
\begin{enumerate}
\item $K3$ surfaces, and their blow-ups in one or two points,
\item elliptic surfaces of type $E(3)$, $E(4)$, $E(5)$, and blow-ups of elliptic surfaces of type $E(3)$,
\item double covers of $\PP^2$ branched along a smooth octic and their blow-ups, 
\item double covers of $\PP^1\times\PP^1$ branched along a smooth curve of bidegree $(6,6)$ and their blow-ups,
\item smooth quintic surfaces in $\PP^3$ and their blow-ups.
\end{enumerate}
We also present a numerical version of Conjecture \ref{conj1}, which can be seen as a statement about intersection numbers on Hilbert schemes of points (Section \ref{verif}). This numerical conjecture implies Conjecture \ref{conj1} for surfaces satisfying $b_1=0$, $p_g>0$, and whose only Seiberg-Witten basic classes are $0$ and $K \neq 0$. We test our numerical conjecture in various examples, which include some minimal general type surfaces found by V.~Kanev, F.~Catanese and O.~Debarre, and U.~Persson (Section \ref{verif}).

These verifications are obtained by writing $\chi_{y}^{\vir}(M)$ in terms of (rank 3 descendent) Donaldson invariants of $S$. By Mochizuki's rank 3 formula \cite[Thm.~7.5.2]{Moc}, the latter can be expressed in terms of Seiberg-Witten invariants and integrals over products of Hilbert schemes of points on $S$. 
We show that these integrals are determined by their values on $S = \PP^2$ and $\PP^1 \times \PP^1$, which can be calculated by localization. A similar strategy was employed in the rank 2 case in \cite{GK1, GK2} (which in turn was inspired by \cite{GNY1, GNY3}).

\subsection{Monopole branch} 

For any $H,r,c_1,c_2$, we denote the generating functions of $y$-refined Vafa-Witten invariants by
\begin{align*}
\sfZ_{S,H,r,c_1}(q,y) := q^{-\frac{1}{2r}\chi + \frac{r}{24} K^2} \sum_{c_2} (-1)^{\vd} \, \VW_S^H(r,c_1,c_2,y) \, q^{\frac{\vd}{2r}},
\end{align*}
where $\vd$ is given by \eqref{vd}. We write 
$$
\sfZ_{S,H,r,c_1}(q,y) = \sfZ_{S,H,r,c_1}^{\inst}(q,y) + \sfZ_{S,H,r,c_1}^{\mono}(q,y),
$$
where the first term on the RHS is the contribution from the instanton branch (i.e.~\eqref{instantongenfun}) and the second term is the contribution from the monopole branch. We view these as Fourier expansions in modular parameters 
$$
q=e^{2\pi i \tau}, \quad y=e^{2\pi i z}, \quad (\tau, z) \in \mathfrak{H} \times \C.
$$ 
For all $a, b \in H^2(S,\Z)$, define (suppressing $r$ from the notation)
\begin{equation} \label{deltadef}
\delta_{a,b} := \left\{\begin{array}{cc} 1 & \qquad \textrm{if \ } a-b \in r H^2(S,\Z)  \\ 0 & \qquad \textrm{otherwise}.  \end{array}\right.
\end{equation}
\begin{conjecture} \label{conj2}
Let $S$ be a smooth projective surface satisfying $H_1(S,\Z)= 0$ and $p_g>0$. For any $H,c_1$ we have
{\scriptsize{
\begin{align*}
\frac{\sfZ_{S,H,3,c_1}^{\mono}(q,y)}{(y^{\frac{1}{2}} - y^{-\frac{1}{2}})^{\chi}} = &\Bigg( \frac{1}{\phi_{-2,1}(q^3,y^3)^{\frac{1}{2}} \Delta(q^3)^{\frac{1}{2}}} \Bigg)^{\chi} \Bigg(\frac{\Theta_{A_2,(1,0)}(q^{\frac{1}{2}},y)}{\eta(q)^3}  \Bigg)^{-K^2}  \\
&\qquad\qquad \times\sum_{a,b} \SW(a) \, \SW(b) \,\delta_{c_1+a,b} \, W_+(q^{\frac{1}{2}},y)^{ab} \, W_-(q^{\frac{1}{2}},y)^{(K-a)(K-b)}, 
\end{align*}
}}
\!\!\!\!\! where the sum is over all $(a,b) \in H^2(S,\Z) \times H^2(S,\Z)$ and $W_{\pm}(x,y)$ are the solutions of the following quadratic equations in $\omega$
\begin{align*}
\omega^2 - (W(x,y)^2+3W(x,y)W(x,1)) \, \omega +W(x,y)+3W(x,1) = 0,
\end{align*}
where $W(x,y) := \frac{\Theta_{A_2, (0,0)}(x,y)}{\Theta_{A_2, (1,0)}(x,y)}$.
\end{conjecture}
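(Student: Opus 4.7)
The plan is to verify this conjecture by combining a recent rank-$3$ nested Hilbert scheme calculation of Laarakker with a reduction-to-model-surfaces argument in the spirit of \cite{GNY1, GNY3, GK1, GK2}. I outline the steps below.

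The first step is to describe the monopole branch explicitly. A $\C^*$-fixed Higgs pair $(E,\phi)$ in rank $3$ with $\phi \neq 0$ splits under the scaling action either as a length-$3$ chain of rank-$1$ eigensheaves $E = E_0 \oplus E_1 \oplus E_2$ with $\phi\colon E_i \to E_{i+1} \otimes K_S$, or as a length-$2$ chain with one rank-$2$ and one rank-$1$ summand. After fixing the determinant and recording the $c_1$-classes of the summands, each $\C^*$-fixed component is identified with a nested Hilbert scheme on $S$ labelled by a pair $(a,b) \in H^2(S,\Z) \times H^2(S,\Z)$. The condition $\det E \cong L$ then forces $c_1 + a \equiv b \pmod 3$, producing exactly the indicator $\delta_{c_1+a,b}$ of \eqref{deltadef}.

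The second step is to invoke the refined virtual localisation of Maulik--Thomas \cite{MT, Tho} on each such component, in Laarakker's formulation. This expresses $\sfZ_{S,H,3,c_1}^{\mono}(q,y)$ as a sum over $(a,b)$ of universal power series in the numerical data $a^2,\,b^2,\,ab,\,aK,\,bK,\,K^2,\,\chi$, weighted by $\SW(a)\SW(b)$, the Seiberg--Witten pairing appearing through the evaluation of the Maulik--Thomas cosection on the basic classes. A cobordism-type universality argument then reduces the determination of these universal series to a finite list of generating functions, which can be pinned down by evaluation on surfaces with simple Seiberg--Witten theory (for instance, minimal general-type surfaces whose only basic classes are $0$ and $K$), using Mochizuki-type formulas or toric localisation as in Section~\ref{verif}.

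The third and hardest step is to match the universal series against the conjectured closed form. The appearance of $W_{\pm}(x,y)$ as the two roots of a quadratic is a genuine algebraic feature, reflecting that the two monopole decomposition types intertwine under the $S_3$ Weyl symmetry of the $A_2$-lattice. Concretely, I would try to establish the identities
\[
W_+ + W_- = W(x,y)^2 + 3 W(x,y) W(x,1), \qquad W_+ W_- = W(x,y) + 3 W(x,1),
\]
at the level of ratios of $A_2$-theta series, by expanding each side as a sum over the weight lattice and matching orbits under the permutation of the simple roots. This is the $A_2$-analog of the rank-$2$ identities involving $\theta_2 \pm \theta_3$ underlying Remark~\ref{rank2cor}, and I expect it to be the main obstacle; once it is in hand, the remainder is a bookkeeping exercise matching Laarakker's expansion against the conjectured one order by order in $q$ and $y$.
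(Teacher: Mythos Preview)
The statement is a \emph{conjecture} in the paper, not a theorem; the paper does not claim a proof, only partial evidence via Laarakker's calculations. Your outline follows exactly the route the paper uses to gather that evidence, but it does not close the gap between evidence and proof.

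The essential gap is in your Step~3. Laarakker's universality (stated in Section~\ref{monsec}) reduces $\sfZ^{\mono}_{S,H,3,c_1}$ to a finite list of universal series $A^{(3)}, B^{(3)}, C^{(3)}_{ij}$, determined by their values on toric surfaces via localisation. The paper reports that these have been computed only modulo $q^6$ and found to agree with the conjectured closed forms to that order; no mechanism is known for determining them to all orders, and your proposal offers none. The identities you single out as ``the main obstacle'',
\[
W_+ + W_- = W(x,y)^2 + 3 W(x,y) W(x,1), \qquad W_+ W_- = W(x,y) + 3 W(x,1),
\]
are not an obstacle at all: they are Vieta's formulas for the quadratic that \emph{defines} $W_\pm$, and hold tautologically. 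The genuine difficulty is to prove that the universal series extracted from nested Hilbert schemes on $\PP^2$ and $\PP^1\times\PP^1$ are \emph{equal}, as full power series, to the specific $A_2$-theta combinations in the conjecture. Nothing in your outline addresses this, and it remains open.

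A secondary point: in Step~1 you list the rank-$(1,2)$ and $(2,1)$ chains as components to be computed alongside the $(1,1,1)$ chain. A cosection argument of Thomas (recorded in the paper around equation~\eqref{1^rconj}) shows these contribute zero in rank~3, so only the $(1,1,1)$ component matters. This simplifies the structure but does not affect the main gap above.
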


\begin{remark}
Note that the formula for the instanton branch (Conjecture \ref{conj1}) only features the lattice $A_2^\vee$ whereas the formula for the monopole branch only involves the lattice $A_2$. We will see later that (part of) the instanton branch gets swapped with the monopole branch under $\tau \mapsto -1 / \tau$ (Section \ref{modsec}). Moreover, we have (Lemma \ref{thetalemma})
$$
W(x^3,y) = \frac{Z(x,y)+2}{Z(x,y)-1}.
$$
\end{remark}

\begin{remark} \label{rank2conj}
We have a parallel conjecture in the rank 2 case:
{\scriptsize{
\begin{align*}
\frac{\sfZ_{S,H,2,c_1}^{\mono}(q,y)}{(y^{\frac{1}{2}} - y^{-\frac{1}{2}})^{\chi}} = &\Bigg( \frac{1}{\phi_{-2,1}(q^2,y^2)^{\frac{1}{2}} \Delta(q^2)^{\frac{1}{2}}} \Bigg)^{\chi} \Bigg( \frac{\theta_3(q,y)}{\eta(q)^2} \Bigg)^{-K^2} (-1)^\chi \sum_{a} \SW(a) \, \delta_{c_1,a} \,  \Bigg( \frac{\theta_3(q,y)}{\theta_2(q,y)} \Bigg)^{a K},
\end{align*}
}}
where we note that $\theta_3(q,y)$ is the theta function of the $A_1$ lattice.
\end{remark}

Altogether \ref{rank2cor}+\ref{rank2conj} and  \ref{corconj1}+\ref{conj2} provide closed conjectural formulae for the $y$-refined $\SU(2)$ and $\SU(3)$ Vafa-Witten invariants of any polarized surface $(S,H)$ satisfying $b_1=0$ and $p_g>0$, and any $c_1$. 
We explore some consequences of these formulae, e.g.~to blow-ups, in Section \ref{conseq}.

\begin{remark}
For any prime rank $r>2$, there exists a formula for $\mathrm{SU}(r)$ Vafa-Witten invariants in the physics literature \cite[(5.13)]{LL}. This formula supposedly applies to any smooth projective surface $S$ such that $H_1(S,\Z) = 0$ and $|K|$ contains a smooth connected curve. However, this formula is incorrect as can be seen from the following example. Let $S \rightarrow \PP^1$ be an elliptic surface with section, 36 rational nodal fibres, and no further singular fibres.
Let $F$ be the class of a fibre and $B$ the class of a section. Then $|K| = |F|$ and taking $c_1 = B$, Labastida-Lozano's formula reduces to zero. However, taking $c_2=3$ and a suitable polarization $H$, a result of T.~Bridgeland \cite{Bri} implies that $M_S^H(3,B,3)$ is smooth of expected dimension and consists of a single reduced point, so $e^{\vir}(M) = e(M) = 1$ (consistent with Conjecture \ref{conj1}). 
\end{remark}

We have the following evidence for Conj.~\ref{conj2} and Remark \ref{rank2conj} (Section \ref{monsec}):
\begin{itemize}
\item Let $(S,H)$ be a polarized surface satisfying $b_1=0$, $|K|$ contains a smooth connected curve, and any line bundle $L$ on $S$ satisfying $0 \leq \deg L \leq \frac{1}{2} \deg K$ is trivial. Take $c_1 = K$. Then the monopole branch of $N_S^H(2,c_1,c_2)^{\C^*}$ is smooth if and only if $c_2 = 0,1,2,3$ \cite{TT1}. For $c_2=0,1,2$, Thomas calculates the monopole contribution to $\VW_S^H(2,c_1,c_2,y)$ \cite{Tho}. His result matches the prediction of Remark \ref{rank2conj}.
\item Suppose $S,H,c_1$ are as in the previous item. The monopole branch of $N_S^H(3,c_1,c_2)^{\C^*}$ is smooth if and only if $c_2 = 0,1,2$ \cite{Laa}. For $c_2=0,1,2$, T.~Laarakker determines the monopole contribution to $\VW_S^H(3,c_1,c_2,y)$. His result matches the prediction of Conjecture \ref{conj2}.
\item Let $(S,H)$ be polarized surface satisfying $H_1(S,\Z)=0$ and  $p_g>0$. Let $r,c_1$ be chosen such that $r$ is prime and there exist no rank $r$ strictly Gieseker $H$-semistable Higgs pairs on $S$ with first Chern class $c_1$. Then A.~Gholampour and Thomas \cite{GT1, GT2} express the monopole contribution to the Vafa-Witten invariants in terms of (virtual) intersection numbers on nested Hilbert schemes of points and curves on $S$ (see also \cite{GSY,Tho}). Based on this result, Laarakker expresses $\sfZ_{S,H,r,c_1}^{\mono}(q,y)$ in terms of Seiberg-Witten invariants and universal power series, which can be written in terms of intersection numbers on $S^{[n_1]} \times \cdots \times S^{[n_r]}$. The latter are entirely determined on $S = \PP^2, \ \PP^1 \times \PP^1$ much like in Section \ref{chiysec} of this paper. Localization calculations allow him to verify Remark \ref{rank2conj} and Conjecture \ref{conj2} up to certain orders (Section \ref{monsec}). 
\end{itemize}

\subsection{Refined modularity} \label{modintro}

Let $r=1$ or $r>1$ prime. Assume $H_1(S,\Z) = 0$. Motivated by $S$-duality, physicists predict that $\sfZ_{S,H,r,c_1}(q)$ only depends on $[c_1] \in H^2(S,\Z) / r H^2(S,\Z)$ and is the Fourier expansion of a meromorphic function $\sfZ_{S,H,r,c_1}(\tau)$ on $\mathfrak{H}$ satisfying
 \cite[(5.39)]{VW}, \cite[(5.22)]{LL}
\begin{align} \label{modunref}
\begin{split}
\sfZ_{S,H,r,c_1}(\tau+1) &= (-1)^{r \chi} \, e^{\frac{\pi i r}{12} K^2} \, e^{-\frac{\pi i (r-1)}{r} c_1^2} \, \sfZ_{S,H,r,c_1}(\tau), \\
\sfZ_{S,H,r,c_1}(-1/\tau) &= (-1)^{(r-1)\chi} \, r^{1-\frac{e}{2}} \, \Big( \frac{\tau}{i} \Big)^{-\frac{e}{2}} \, \sum_{[a]} e^{\frac{2 \pi i}{r} c_1 a} \sfZ_{S,H,r,a}(\tau),
\end{split}
\end{align}
where the sum is over all $[a] \in H^2(S,\Z) / r H^2(S,\Z)$. We refer to the second transformation in \eqref{modunref} as the $S$-duality transformation. 

\begin{remark}
There is a subtlety in the interpretation of these statements (and the statement of the conjecture below). More precisely: conjecturally there exists a series $\widetilde{\sfZ}_{S,H,r,c_1}(q)$ defined for any $S,H,r$ as above and any \emph{possibly non-algebraic} $c_1 \in H^2(S,\Z)$ such that:
\begin{itemize}
\item $\widetilde{\sfZ}_{S,H,r,c_1}(q)$ only depends on $[c_1] \in H^2(S,\Z) / rH^2(S,\Z)$,
\item $\widetilde{\sfZ}_{S,H,r,c_1}(q) = \sfZ_{S,H,r,c_1}(q)$ for algebraic classes $c_1 \in H^2(S,\Z)$,
\item $\widetilde{\sfZ}_{S,H,r,c_1}(q)$ is the Fourier expansion of a meromorphic function $\widetilde{\sfZ}_{S,H,r,c_1}(\tau)$ on $\mathfrak{H}$ satisfying \eqref{modunref}.
\end{itemize}
Indeed, after multiplying by $(y^{\frac{1}{2}} - y^{-\frac{1}{2}})^\chi$ and setting $y=1$, the expression for $\widetilde{\sfZ}_{S,H,2,c_1}(q)$ is obtained by summing the RHS of \ref{rank2cor}+\ref{rank2conj}, which makes sense for \emph{any} $c_1 \in H^2(S,\Z)$ and which only depends on $[c_1] \in H^2(S,\Z) / 2 H^2(S,\Z)$. Similarly for $\widetilde{\sfZ}_{S,H,3,c_1}(q)$ using \ref{corconj1}+\ref{conj2}. 
\end{remark}

We conjecture the following $y$-refinement of \eqref{modunref}:
\begin{conjecture} \label{conj3}
Let $S$ be a smooth projective surface satisfying $H_1(S,\Z) = 0$ and $p_g>0$. Let $H$ be a polarization on $S$, $r=1$ or $r$ prime, and $c_1 \in H^2(S,\Z)$. Then $\sfZ_{S,H,r,c_1}(q,y)$ only depends on $[c_1] \in H^2(S,\Z) / r H^2(S,\Z)$ and is the Fourier expansion of a meromorphic function $\sfZ_{S,H,r,c_1}(\tau,z)$ on $\mathfrak{H} \times \C$ satisfying
\begin{align} \label{modref}
\begin{split}
\sfZ_{S,H,r,c_1}(\tau,z)\Big|_{(\tau+1,z)} =& \, (-1)^{r \chi} e^{\frac{\pi i r}{12} K^2} e^{-\frac{\pi i (r-1)}{r} c_1^2} \sfZ_{S,H,r,c_1}(\tau,z), \\
\frac{\sfZ_{S,H,r,c_1}(\tau,z)}{(y^{\frac{1}{2}} - y^{-\frac{1}{2}})^{\chi}}\Big|_{(-1/\tau,z/\tau)} =& \, (-1)^{r \chi} r^{1-\frac{e}{2}} i^{-\frac{K^2}{2}} \tau^{-5\chi+\frac{K^2}{2}} e^{\frac{2 \pi i z^2}{\tau}\Big(-\frac{r}{2} \chi - \frac{r(r^2-1)}{24} K^2 \Big)} \\
& \, \times \sum_{[a]} e^{\frac{2 \pi i}{r} c_1 a} \frac{\sfZ_{S,H,r,a}(\tau,z)}{(y^{\frac{1}{2}} - y^{-\frac{1}{2}})^{\chi}},
\end{split}
\end{align}
where the sum is over all $[a] \in H^2(S,\Z) / r H^2(S,\Z)$.
\end{conjecture}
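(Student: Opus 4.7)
The plan is to prove the transformation law not for the a priori definition of $\sfZ_{S,H,r,c_1}(q,y)$ (which is only known as a sum of invariants), but for the explicit closed-form expression obtained by summing the instanton formula of Corollary \ref{corconj1} with the monopole formula of Conjecture \ref{conj2} in rank $3$ (and Remarks \ref{rank2cor} and \ref{rank2conj} in rank $2$; the rank $1$ case is classical). This expression is well-defined for any $c_1\in H^2(S,\Z)$, algebraic or not, and depends on $c_1$ only through the phase $\epsilon^{(a-b)c_1}$ and the delta $\delta_{c_1+a,b}$, each of which manifestly factors through $H^2(S,\Z)/rH^2(S,\Z)$. The first assertion of the conjecture is thus immediate; the rest reduces to checking the two explicit functional equations in \eqref{modref}.

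For the $T$-transformation $\tau\mapsto\tau+1$, one uses that the three summands in Corollary \ref{corconj1} are related by the substitution $q^{1/6}\mapsto\epsilon^k q^{1/6}$ with the explicit phase prefactors $\epsilon^{k(2c_1^2+8\chi)}$, so that $\tau\mapsto\tau+1$ cyclically permutes them up to a global scalar. Combining $\Delta(\tau+1)=\Delta(\tau)$, $\eta(\tau+1)=e^{\pi i/12}\eta(\tau)$, the invariance of $\phi_{-2,1}$ under $\tau\mapsto\tau+1$, and the quadratic shift of $\Theta_{A_2^\vee,(0,1)}$ under the glue vector, one checks that the global scalar equals the predicted multiplier $(-1)^{r\chi}e^{\pi i rK^2/12}e^{-\pi i(r-1)c_1^2/r}$; the monopole piece is handled identically with $\Theta_{A_2,(1,0)}$ in place of $\Theta_{A_2^\vee,(0,1)}$.

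The heart of the argument is the $S$-transformation. The strategy is to invoke Poisson summation for the lattice $A_2$, which yields Jacobi transformation formulas expressing $\Theta_{A_2,\mu}(-1/\tau,z/\tau)$ as a $\tau$-multiple of a Gaussian in $z$ times a linear combination of $\Theta_{A_2^\vee,\nu}(\tau,z)$ over the finite group $A_2^\vee/A_2\cong\Z/3$, with coefficients $e^{-2\pi i\langle\mu,\nu\rangle^\vee}$. Together with $\eta(-1/\tau)=(\tau/i)^{1/2}\eta(\tau)$, $\Delta(-1/\tau)=\tau^{12}\Delta(\tau)$, and $\phi_{-2,1}(-1/\tau,z/\tau)=\tau^{-2}e^{2\pi i z^2/\tau}\phi_{-2,1}(\tau,z)$, this maps the instanton formula (built from $A_2^\vee$ theta functions) into an expression built from $A_2$ theta functions --- precisely the structure of the monopole formula, and conversely. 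The key algebraic bridge between the two branches is the identity $W(x^3,y)=(Z(x,y)+2)/(Z(x,y)-1)$ in the remark after Conjecture \ref{conj2}, which, combined with the quadratics defining $Z_\pm$ and $W_\pm$, realises $W_\pm$ as explicit M\"obius transforms of $Z_\pm$ under $x\mapsto x^3$; the Seiberg--Witten double sums interchange via finite Fourier duality on $H^2(S,\Z)/rH^2(S,\Z)$, under which $\epsilon^{(a-b)c_1}$ and $\delta_{c_1+a,b}$ are dual with respect to the sum over $[a]$ weighted by $e^{2\pi i c_1 a/r}$ in \eqref{modref}. The main obstacle I foresee is the careful matching of scalar, $\tau$-power, and $e^{2\pi i z^2/\tau}$ prefactors: the weight $-5\chi+K^2/2$ must assemble from $\phi_{-2,1}^{-\chi/2}$, $\Delta^{-\chi/2}$, $\eta^{3K^2}$ and the rank-$2$ theta quotients, while the quadratic-in-$z$ exponent $-r\chi/2-(r-1)^2K^2/4$ must combine the index $1$ of $\phi_{-2,1}$ (yielding the $\chi$-contribution) with the Gaussians from Poisson summation (yielding the $K^2$-contribution via the shift $a\mapsto K-a$ in the exponents of $Z_\pm$ versus $W_\pm$); verifying this simultaneously for all three $T$-branches, with correct $\epsilon$-phases, is the bulk of the calculation.
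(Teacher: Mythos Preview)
Your overall approach matches the paper's: it too proves the transformation law only for the explicit closed-form sum of the instanton and monopole formulae (assuming Conjectures \ref{conj1}, \ref{conj2} and Remarks \ref{rank2cor}, \ref{strictlyss}, \ref{rank2conj}), using the theta transformation relating $A_2$ and $A_2^\vee$ (Lemma \ref{thetalemma}), the identity $W(x^3,y)=(Z(x,y)+2)/(Z(x,y)-1)$, the Seiberg--Witten relations $aK=a^2$ and $\SW(K-a)=(-1)^\chi\SW(a)$, and the substitution $(a,b)\mapsto(K-a,K-b)$ in the double sum.

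There is however a genuine gap in your $S$-transformation argument. You describe the mechanism as ``finite Fourier duality on $H^2(S,\Z)/rH^2(S,\Z)$, under which $\epsilon^{(a-b)c_1}$ and $\delta_{c_1+a,b}$ are dual''. This linear identity (the first equation of Proposition \ref{LLlattice}) is what exchanges the $m=0$ instanton branch with the monopole branch. But the other two instanton branches (the $\epsilon q^{1/6}$ and $\epsilon^2 q^{1/6}$ terms in Corollary \ref{corconj1}) do not interact with the monopole branch under $S$; they are exchanged with \emph{each other}, and reproducing the correct phase $\epsilon^{kc_1^2}$ after summing over $[a]$ requires the \emph{quadratic} Gauss sum identity
\[
\sum_{[x]\in H^2(S,\Z_r)} e^{\frac{2\pi i}{r}(c_1 x)} e^{\frac{\pi i(r-1)}{r}m x^2} = \epsilon(m)^{b_2} r^{b_2/2} e^{-\frac{\pi i}{8}(r-1)^2\sigma} e^{\frac{\pi i(r-1)}{r}n c_1^2}
\]
(the second equation of Proposition \ref{LLlattice}), together with the explicit Legendre-symbol values $\epsilon(1)=-1$, $\epsilon(2)=1$ for $r=3$. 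Your sketch contains no mechanism producing the quadratic phase $e^{\pi i(r-1)nc_1^2/r}$, and without it the two shifted instanton branches do not close up.

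A second, more technical, missing ingredient is the determination of the sign in the $S$-transformation of $\phi_{-2,1}(\epsilon^k q^{1/3},y)^{1/2}\Delta(\epsilon^k q^{1/3})^{1/2}$ for $k=1,2$. This is not immediate from the transformation of $\phi_{-2,1}$ and $\Delta$ separately; the paper computes it via Dedekind sums (Lemma \ref{Dedekind}), yielding the opposite signs for $k=0$ versus $k=1,2$, which is essential for the final bookkeeping you allude to.
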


In Section \ref{modsec}, we provide the following evidence for this conjecture:
\begin{itemize}
\item Conjecture \ref{conj3} holds for $r=1$ and implies \eqref{modunref}.
\item For $r$ prime, we conjecture a formula for $\sfZ_{K3,H,r,c_1}(\tau,z)$, refining an existing formula for $\sfZ_{K3,H,r,c_1}(\tau)$, which satisfies Conjecture \ref{conj3}.
\item Assume Remarks \ref{rank2cor}, \ref{strictlyss}, \ref{rank2conj}. Then Conjecture \ref{conj3} holds for $r=2$.
\item Assume Conj.~\ref{conj1}, \ref{conj2}, and Rem.~\ref{strictlyss}. Then Conj.~\ref{conj3} holds for $r=3$.
\end{itemize}

\noindent \textbf{Acknowledgements.} We warmly thank Richard Thomas for providing drafts of \cite{Tho}. We also thank Ties Laarakker for crucial discussions. In the early stages of the project, there was no direct evidence for our formulae on the monopole branch (Conjecture \ref{conj2}, Remark \ref{rank2conj}) other than the fact that the unrefined formulae transform according to the physicists' predictions \eqref{modunref}. However, around the same time Maulik-Thomas \cite{MT,Tho} defined the monopole contribution to the $y$-refined VW invariants and Thomas \cite{Tho} calculated the monopole contribution for rank 2 and $c_2=0,1,2$, which were consistent with our predictions, and later Laarakker \cite{Laa} extended these calculations to rank 3 and more values of $c_2$ (Section \ref{monsec}). This in turn enabled us to present  ``$y$-refined modularity'' as a compelling separate conjecture (Conjecture \ref{conj3}). 

We also thank Jan Manschot for useful discussions and for pointing out an error in the coefficient of $(2 \pi i z^2 / \tau)K^2$ in formula \eqref{modref} in a previous version of this paper. See \cite{AMP}.

This material is based upon work supported by the National Science Foundation under Grant No.~DMS-1440140 while M.K. was in residence at the Mathematical Sciences Research Institute in Berkeley, California, during the Spring 2018 Semester.

\section{Instanton branch} \label{chiysec}

\subsection{Descendent Donaldson invariants} \label{Donaldson} Let $S$ be a smooth projective surface satisfying $b_1 = 0$. For a polarization $H$, we denote by $M:=M_S^H(r,c_1,c_2)$ the moduli space of rank $r$ Gieseker $H$-stable sheaves on $S$ with Chern classes $c_1,c_2$. We assume there are no rank $r$ strictly Gieseker $H$-semistable sheaves on $S$ with Chern classes $c_1,c_2$. 
Consider the projections
\begin{displaymath}
\xymatrix
{
& M \times S \ar_{\pi_M}[dl] \ar^{\pi_S}[dr] & \\
M & & S  
}
\end{displaymath}
The moduli space $M$ admits a perfect obstruction theory with virtual tangent bundle \cite{Moc}
$$
T^\vir = R\hom_\pi(\E,\E)_0[1],
$$
where $\E$ denotes the universal sheaf on $M \times S$, $(\cdot)_0$ denotes trace-free part, and $R\hom_{\pi_M}(\cdot, \cdot) := R\pi_{M*} R\hom(\cdot,\cdot)$. Although the universal sheaf $\E$ may only exist \'etale locally, the complex $T^\vir$ always exists globally \cite[Sect.~10.2]{HL}. We have a corresponding virtual cycle
$$
[M]^\vir \in H_{2\vd}(M),
$$
where $\vd = \vd(M)$ is given by \eqref{vd}.

Next, we assume the universal sheaf $\E$ exists \emph{globally} on $M \times S$.\footnote{We will get rid of this assumption in Remark \ref{univexists}.} For any $\sigma \in H^*(S,\Q)$ and $\alpha \geq 0$, we define the descendent insertion
$$
\tau_{\alpha}(\sigma) := \pi_{M*} \big( \ch_{2+\alpha}(\E) \cap \pi_{S}^{*} \, \sigma \big).
$$
Let $P(\E)$ be any polynomial in descendent insertions. Then we refer to
$$
\int_{[M]^{\vir}} P(\E) \in \Q
$$
as a (descendent, algebraic) Donaldson invariant. These Donaldson invariants were studied in depth by T.~Mochizuki. In \cite{GK1}, we observe that $\chi_y^\vir(M)$ can be expressed in terms of Donaldson invariants. We recall the precise statement.

Let $X$ be a projective $\C$-scheme and denote by $K^0(X)$ the $K$-group generated by locally free sheaves on $X$. For any vector bundle $E$ on $X$, define
$$
\Lambda_y E := \sum_{p=0}^{\rk(E)} [\Lambda^p E] \, y^p \in K^0(X)[[y]].
$$
See \cite{FG} for an extension of this definition to arbitrary elements of $K^0(X)$. Furthermore, for any element $E$ of $K^0(X)$ we set
\begin{align} \label{defT}
\sfT_{y}(E,t) := t^{-\rk E} \sum_k \left\{ \ch(\Lambda_{y} E^{\vee}) \, \td(E) \right\}_k t^{k},
\end{align}
where $\{\cdot\}_k \in A^k(X)_{\Q}$ denotes the degree $k$ part in the Chow ring. We have the following list of basic properties \cite{GK1}:
\begin{itemize}
\item $\sfT_y(E_1 + E_2,t) = \sfT_y(E_1,t) \, \sfT_y(E_2,t)$ for all $E_1, E_2 \in K^0(X)$,
\item $\sfT_y(L,t) = \frac{x (1+y e^{-x t})}{1-e^{-x t}}$ for any line bundle $L$ on $X$ with $c_1(L) = x$,
\item $\sfT_{y}(E,1+y) \in \Q[1+y]$ for all $E \in K^0(X)$,
\item $\sfT_{y}(E - \O_X^{\oplus r},1+y) = \sfT_{y}(E,1+y)$ for all $E \in K^0(X)$ and $r\geq0$, 
\item $\sfT_{y}(E,1+y) \Big|_{y=-1} = c(E)$, i.e.~the total Chern class of $E \in K^0(X)$. 
\end{itemize}
Using virtual Hirzebruch-Riemann-Roch, Grothendieck-Riemann-Roch, and K\"unneth decomposition, one can show the following \cite[Prop.~2.1]{GK1}:
\begin{proposition} \label{chiyinsert}
For $S,H,r,c_1,c_2$ as above, there exists a polynomial expression $P(\E)$ in certain descendent insertions $\tau_{\alpha}(\sigma)$ and $y$ such that
$$
\chi^{\vir}_{-y}(M_{S}^{H}(r,c_1,c_2)) = \int_{[M_{S}^{H}(r,c_1,c_2)]^{\vir}} P(\E).
$$
\end{proposition}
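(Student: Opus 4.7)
The plan is to adapt the rank~$2$ argument of \cite[Prop.~2.1]{GK1} essentially verbatim, since its structure does not depend on $r$. The starting point is Fantechi--G\"ottsche's virtual Hirzebruch--Riemann--Roch formula \cite{FG},
$$
\chi_{-y}^{\vir}(M) \;=\; \int_{[M]^\vir} \ch\bigl(\Lambda_{-y}(T^\vir)^\vee\bigr)\,\td(T^\vir).
$$
By the definition \eqref{defT} and the listed properties of $\sfT_y$, the integrand is the total class $\sfT_{-y}(T^\vir, 1)$, hence a universal polynomial in the Chern characters $\ch_k(T^\vir)$ with coefficients in $\Q[y]$.

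The next step is to rewrite each $\ch_k(T^\vir)$ as a polynomial in descendent insertions. Since $T^\vir = R\hom_{\pi_M}(\E,\E)_0[1]$ and the trace part contributes only a constant $\chi(\O_S)$, Grothendieck--Riemann--Roch applied to $\pi_M$ gives
$$
\ch(T^\vir) \;=\; -\pi_{M*}\!\bigl(\ch(\E^\vee)\,\ch(\E)\,\pi_S^*\td(S)\bigr) + \chi(\O_S).
$$
Fixing a homogeneous $\Q$-basis $\{\sigma_i\}$ of $H^*(S,\Q)$ and K\"unneth-decomposing $\ch(\E) = \sum_i \gamma_i \otimes \sigma_i$ with $\gamma_i \in H^*(M,\Q)$, the projection formula turns the push-forward into a polynomial in the descendents $\tau_\alpha(\sigma_i)$, with coefficients determined by the intersection pairings $\int_S \sigma_i \cdot \sigma_j \cdot \td(S)$. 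Substituting this into the polynomial produced in the first step yields the desired $P(\E)$.

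There is no genuine obstacle: every step is a formal consequence of universal operations on Chern characters. The only bookkeeping that needs attention is the sign from the degree shift $[1]$, the subtraction of the trace in $(\cdot)_0$, and keeping K\"unneth signs consistent when splitting $\ch(\E^\vee)\,\ch(\E)$ across $M \times S$. Polynomial dependence on $y$ is automatic from the construction of $\Lambda_{-y}$, and independence from $r$ is automatic as well, since $r$ enters only through $\vd$ and the rank of $\E$, neither of which affects the mere existence of a polynomial expression $P(\E)$ of the required form.
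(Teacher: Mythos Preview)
Your proposal is correct and follows exactly the approach the paper indicates: the paper does not give a full proof either but simply cites \cite[Prop.~2.1]{GK1} and names virtual Hirzebruch--Riemann--Roch, Grothendieck--Riemann--Roch, and K\"unneth decomposition as the three ingredients, which is precisely what you spell out. Your observation that nothing in the argument is rank-specific is also the paper's implicit point in referring back to the rank~$2$ case.
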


\subsection{Mochizuki's rank 3 formula}

In his remarkable book \cite{Moc}, Mochizuki derives a formula for (descendent, algebraic) Donaldson invariants \emph{for any rank} in terms of Seiberg-Witten invariants and integrals over Hilbert schemes of points \cite[Thm.~7.5.2]{Moc}. For rank $r=2$, his formula has interesting applications to Witten's conjecture \cite{GNY3}, $\SU(2)$ Vafa-Witten invariants \cite{GK1}, and refinements thereof \cite{GK1, GK2}. In this paper, we apply Mochizuki's formula for rank $r=3$ to $y$-refined $\SU(3)$ Vafa-Witten invariants.

Let $S$ be a smooth projective surface satisfying $b_1 = 0$ and $p_g > 0$. As in the introduction, we denote the Seiberg-Witten invariants of $S$ by $\SW(a)$ and the Hilbert scheme of points $n$ points on $S$ by $S^{[n]}$. The latter has a universal subscheme
\begin{displaymath}
\xymatrix
{
& \cZ \ar@{^(->}[r] \ar_q[dl] \ar^p[dr] & S \times S^{[n]}    \\
S & & S^{[n]}.
}
\end{displaymath}
We denote the universal ideal sheaf by $\I_{\cZ}$. When $L$ is a line bundle on $S$, we denote the corresponding rank $n$ tautological vector bundle by
$
L^{[n]} := p_* q^* L.
$

Consider a product of three Hilbert schemes
\begin{equation} \label{tripleHilb}
S^{[n_1]} \times S^{[n_2]} \times S^{[n_3]}.
\end{equation}
Denote the pull-backs of the various universal ideal sheaves on $S \times \prod_{i=1}^{3} S^{[n_i]}$ by $\I_1$, $\I_2$, $\I_3$. We endow \eqref{tripleHilb} with a trivial $\C^* \times \C^*$ action and we denote the generators of the corresponding character group by $\s_1, \s_2$. Moreover 
$$
H^*(B(\C^* \times \C^*),\Q) = H^*_{\C^* \times \C^*}(pt,\Q) \cong \Q[s_1,s_2],
$$
where 
$$
s_1 = c_1^{\C^* \times \C^*}(\s_1), \quad s_2 = c_1^{\C^* \times \C^*}(\s_2)
$$
are the corresponding equivariant parameters. For later use, we introduce ``characters''\footnote{These are elements of $X(\C^* \times \C^*) \otimes_\Z \Q$, where $X(\C^* \times \C^*)$ denotes the character lattice.}
\begin{equation} \label{Tfrak}
\mathfrak{T}_1  = \s_1^{-1} \quad \mathfrak{T}_2 = \s_1^{\frac{1}{2}} \s_2^{-1} \quad \mathfrak{T}_3 = \s_1^{\frac{1}{2}} \s_2
\end{equation}
and we define $T_i := c_1^{\C^* \times \C^*}(\mathfrak{T}_i)$.

Some more notation. Let $a \in A^1(S)$ be a divisor class on $S$, then we denote the corresponding line bundle (up to isomorphism) by $\O(a)$. Furthermore
$$
\chi(a) := \frac{a^2 - aK}{2} + \chi,
$$
where $K:=K_S$ and $\chi:=\chi(\O_S)$. Furthermore, for $\ch = (r,c_1,\frac{1}{2}c_1^2 - c_2) \in \bigoplus_i H^{2i}(S,\Q)$, we write
$$
\chi(\ch) :=  \int_S \ch \cdot \td(S).
$$
When $H$ is a polarization on $S$, we denote the reduced Hilbert polynomials associated to $a$ and $\ch$ by (provided $r>0$)
$$
p_{a}(m) := \chi(e^{mH+a}) = \chi(mH+a) \qquad p_{\ch}(m) := \chi(e^{mH} \cdot \ch) / r.
$$

Let $P(\E)$ be \emph{any} polynomial in descendent insertions $\tau_{\alpha}(\sigma)$ arising from a polynomial in Chern numbers of $T^\vir$ (e.g.~like in Prop.~\ref{chiyinsert}). For any $a_1, a_2 \in A^1(S)$ and $n_1, n_2 \in \Z_{\geq 0}$, define $\Psi(a_1,a_2,a_3,n_1,n_2,n_3)$ by the following expression
{\scriptsize{
\begin{align} 
\begin{split} \label{Psi}
&\Res_{s_2 = 0} \Res_{s_1 = 0} \Bigg( P\big(\bigoplus_{i=1}^{3} \I_i(a_i) \otimes \mathfrak{T}_i\big)  \, \prod_{i=1}^{2} \Eu(\O(a_i)^{[n_i]}) s_i^{-1+\sum_{j \geq i} \chi(y_j)} \prod_{1\leq i<j \leq 3} \frac{\Eu(\O(a_j)^{[n_j]} \otimes \mathfrak{T}_j \otimes \mathfrak{T}_i^{-1})}{(T_j-T_i)^{\chi(a_j)}   Q(\I_i(a_i) \otimes \mathfrak{T}_i, \I_j(a_j) \otimes \mathfrak{T}_j) } \Bigg) \\
&\mathrm{where} \ y_i = (1,a_i,\frac{1}{2}a_i^2 - n_i).
\end{split}
\end{align}
}}

\noindent We explain the notation. In this formula, $\I_i(a_i)$ is short-hand for $\I_i \otimes \pi_S^* \O(a_i)$. Furthermore, $\Eu(\cdot)$ denotes $\C^* \times \C^*$ equivariant Euler class. Note that $\I_i(a_i)$ and $\O(a_i)^{[n_i]}$ have trivial $\C^* \times \C^*$ equivariant structures, so the equivariant structures come entirely from the characters $\mathfrak{T}_i$. Next, $\Res_{s_i=0}(\cdot)$ is the residue at $s_i = 0$, i.e.~the coefficient of $s_i^{-1}$ of $(\cdot)$ viewed as a Laurent series in $s_i$. For any $\C^* \times \C^*$ equivariant sheaves $\E_1$, $\E_2$ on $S \times \prod_{i=1}^{3} S^{[n_i]}$, flat over $\prod_{i=1}^{3} S^{[n_i]}$, define
$$
Q(\E_1,\E_2) :=\Eu(- R\hom_{\pi}(\E_1,\E_2) - R\hom_{\pi}(\E_2,\E_1)), 
$$
where $\pi : S \times \prod_{i=1}^{3} S^{[n_i]} \rightarrow \prod_{i=1}^{3} S^{[n_i]}$ denotes projection.
Moreover, $P(\cdot)$ is the expression obtained from $P(\E)$ by formally replacing $\E$ by $\cdot$. For later use, we define
$$
\widetilde{\Psi}(a_1,a_2,a_3,n_1,n_2,n_3,s_1,s_2)
$$
by expression \eqref{Psi} but \emph{without} applying $\Res_{s_2 = 0} \Res_{s_1 = 0}$. 

Fix a Chern character $\ch = (3,c_1,\frac{1}{2}c_1^2 - c_2)$. For any decomposition $c_1 = a_1 + a_2 + a_3 \in A^1(S)$, define
\begin{equation*} 
\cA(a_1,a_2,a_3,c_2) := \sum_{n_1 + n_2 + n_3 = c_2 - \sum_{i<j} a_i a_j} \int_{\prod_{i=1}^{3} S^{[n_i]}} \Psi(a_1,a_2,a_3,n_1,n_2,n_3).
\end{equation*}
Denote the same expression, with $\Psi$ replaced by $\widetilde{\Psi}$, by $\widetilde{\cA}(a_1,a_2,a_3,c_2,s_1,s_2)$.

\begin{theorem}[Mochizuki] \label{mocthm}
Let $S$ be a smooth projective surface satisfying $b_1 = 0$ and $p_g >0$. Let $H, c_1,c_2$ be chosen such that there exist no rank 3 strictly Gieseker $H$-semistable sheaves on $S$ with Chern classes $c_1,c_2$. Suppose the following conditions hold:
\begin{enumerate}
\item[(i)] There exists a universal sheaf $\E$ on $M_{S}^{H}(3,c_1,c_2) \times S$.
\item[(ii)] $\chi(\ch) > \chi$, where $ \chi:=\chi(\O_S)$.
\item[(iii)] $p_{\ch} > p_{K}$.
\item[(iv)] For all Seiberg-Witten basic classes $a_1, a_2, a_3$ satisfying $a_2H \leq a_3H$ and $a_1 H \leq \frac{1}{2}(a_2+a_3)H$, both inequalities are strict. 
\end{enumerate}
Let $P(\E)$ be any polynomial in descendent insertions, which arises from a polynomial in Chern numbers of $T^\vir$ (e.g.~like in Prop.~\ref{chiyinsert}). Then\footnote{Our formula differs by a factor $3$ from Mochizuki's. Mochizuki works on the moduli stack of oriented sheaves which maps to $M$ via a degree $\frac{1}{3}:1$ \'etale morphism.}
$$
\int_{[M_{S}^{H}(3,c_1,c_2)]^{\vir}} P(\E) = 3  \sum_{{\scriptsize{\begin{array}{c} c_1 = a_1 + a_2 + a_3 \\ a_1 H < \frac{1}{2}(a_2+a_3)H \\ a_2 H < a_3 H \end{array}}}} \SW(a_1) \, \SW(a_2) \, \cA(a_1,a_2,a_3,c_2).
$$
\end{theorem}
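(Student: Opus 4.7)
The plan is to derive Theorem \ref{mocthm} as the rank $r=3$ specialization of Mochizuki's general formula \cite[Thm.~7.5.2]{Moc} for (descendent, algebraic) Donaldson invariants, combined with the $1:3$ \'etale comparison between Mochizuki's moduli stack of oriented sheaves and our moduli space $M$ noted in the footnote. The bulk of the proof is a notational translation; no new ideas are required beyond those already present in \cite{Moc}.

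First I would recall Mochizuki's statement. For general rank $r$ (with no strictly Gieseker semistables), his theorem expresses the Donaldson-type integral against $[M^{\mathrm{or}}]^{\vir}$ as a sum over ordered decompositions $c_1 = a_1 + \cdots + a_r$ of the first Chern class into Seiberg-Witten basic classes, weighted by $\prod_{i=1}^{r-1}\SW(a_i)$ (the last factor being automatic since $a_r$ is determined), with each term a $(\C^*)^{r-1}$-equivariant residue of an integral over $\prod_{i=1}^{r} S^{[n_i]}$ summed over $n_1 + \cdots + n_r = c_2 - \sum_{i<j} a_i a_j$. The formula is proved by wall-crossing on moduli of stable parabolic sheaves interpolating between $M$ and a totally split chamber; conditions (ii) and (iii) here are precisely Mochizuki's positivity hypotheses that ensure no wall-crossing contribution survives on the $M$-side, while (iv) guarantees strict ordering of the Seiberg-Witten data so the residue expression is boundary-free. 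Specializing to $r=3$, the torus $(\C^*)^2$ acts through the three characters $\mathfrak{T}_1, \mathfrak{T}_2, \mathfrak{T}_3$ of \eqref{Tfrak}, normalized so that $\mathfrak{T}_1 \mathfrak{T}_2 \mathfrak{T}_3 = 1$, reflecting the trace-free $\SU(3) \subset U(3)$ constraint.

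Next I would match Mochizuki's integrand with \eqref{Psi} term by term. The diagonal factors $\Eu(\O(a_i)^{[n_i]})$ together with the powers $s_i^{-1+\sum_{j \geq i}\chi(y_j)}$ come from the deformation-obstruction complex of the summand $\I_i(a_i)\otimes\mathfrak{T}_i$, while the off-diagonal factors $\Eu(\O(a_j)^{[n_j]} \otimes \mathfrak{T}_j \otimes \mathfrak{T}_i^{-1})$, $(T_j - T_i)^{-\chi(a_j)}$, and $Q(\I_i(a_i) \otimes \mathfrak{T}_i, \I_j(a_j) \otimes \mathfrak{T}_j)^{-1}$ package the cross-term $\Ext$-groups between distinct summands, with $(T_j - T_i)^{-\chi(a_j)}$ capturing the constant (locally free) part of $R\hom_\pi(\I_i(a_i), \I_j(a_j))$ not absorbed by $Q$. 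The iterated residue $\Res_{s_2=0}\Res_{s_1=0}$ implements Atiyah-Bott localization along the torus-fixed locus of the universal split object, and condition (i) guarantees the universal sheaf $\E$ on $M \times S$ exists so that $P(\E)$ is globally defined. Finally, the overall factor of $3$ comes from the fact that Mochizuki's moduli stack of oriented sheaves maps to $M$ by an \'etale morphism of degree $1/r = 1/3$, reflecting the $\mu_3$-ambiguity in choosing the orientation isomorphism $\det \E \cong L$.

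The main obstacle is therefore purely combinatorial and notational: tracking equivariant structures, signs, and the precise placement of the characters $\mathfrak{T}_i$ on universal ideal sheaves so as to match \eqref{Psi} with Mochizuki's formula in his conventions. Once this bookkeeping is complete and conditions (i)-(iv) have been seen to imply the corresponding hypotheses of \cite[Thm.~7.5.2]{Moc}, the theorem follows directly.
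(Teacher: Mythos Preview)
Your proposal is correct and matches the paper's treatment: the paper does not give its own proof of this theorem but simply cites it as the rank $3$ case of \cite[Thm.~7.5.2]{Moc}, with the footnote explaining the factor of $3$ from the degree $\frac{1}{3}:1$ \'etale morphism between Mochizuki's stack of oriented sheaves and $M$. Your sketch of the notational dictionary between Mochizuki's integrand and the expression $\Psi$ in \eqref{Psi} is exactly the bookkeeping the paper leaves implicit.
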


\begin{remark} \label{univexists}
In this theorem, assumption (i) can be dropped. Since $\E$ always exists \'etale locally, the complex $T^{\vir} = -R\hom_{\pi}(\E,\E)_0$ exists globally so the left-hand side of Mochizuki's formula makes sense. Furthermore, Mochizuki \cite{Moc} works over the Deligne-Mumford stack of oriented sheaves,
which always has a universal sheaf. From this it can be seen that global existence of the universal sheaf can be omitted from the assumptions. Another advantage, when working on the stack, is that $P$ can be \emph{any} polynomial in descendent insertions defined using the universal sheaf of the stack.
\end{remark}

\begin{remark} \label{assumpmocthm}
In \cite{GNY3}, the authors conjecture that assumptions (iii) and (iv) can be dropped and the sum can be replaced by a sum over \emph{all} Seiberg-Witten basic classes. Assumption (ii) is necessary. We call this the strong form of Mochizuki's formula.
\end{remark}

\subsection{Eleven universal functions}

In this section, we want to isolate the part of Mochizuki's formula (Theorem \ref{mocthm}), which involves integrals over Hilbert schemes of points. These are best studied by combining them into a generating function. 

Let $S$ be \emph{any} smooth projective surface. We recall that the tangent bundle to the Hilbert scheme satisfies
$$
T_{S^{[n]}} \cong R\hom_\pi(\I,\I)_0[1],
$$
where $\pi : S \times S^{[n]} \rightarrow S^{[n]}$ denotes projection. Furthermore, on the triple product of Hilbert schemes we use the projections
\begin{displaymath}
\xymatrix
{
& S^{[n_1]} \times S^{[n_2]} \times S^{[n_3]} & \\
& S \times S^{[n_1]} \times S^{[n_2]} \times S^{[n_3]} \ar_{\pi}[u] \ar_{\pi_1}[dl] \ar^{\pi_2}[d] \ar^{\pi_3}[dr] & \\
S^{[n_1]} & S^{[n_2]} & S^{[n_3]}.
}
\end{displaymath}

\begin{definition}
Let $a_1,a_2,a_3 \in A^1(S)$. Define:
\begin{align*}
&\sfZ_S(a_1,a_2,a_3,s_1,s_2,y,q):= \sum_{n_1,n_2,n_3 \geq 0} (q/s_1)^{n_1} (q/(s_1s_2))^{n_2} (q/(s_1s_2))^{n_3} \times \\
&\int_{\prod_{i=1}^{3} S^{[n_i]}} \frac{\sfT_{-y}^{\C^* \times \C^*}(E_{n_1,n_2,n_3},1-y)}{\Eu(E_{n_1,n_2,n_3} - \sum_{i=1}^{3} \pi_i^* T_{S^{[n_i]}})} \, \prod_{i=1}^{2} \Eu(\O(a_i)^{[n_i]}) \prod_{i<j} \Eu(\O(a_j)^{[n_j]} \otimes \mathfrak{T_j} \otimes \mathfrak{T}_i^{-1}).
\end{align*} 
Here $\sfT_{-y}^{\C^* \times \C^*}$ denotes the $\C^* \times \C^*$ equivariant analog of \eqref{defT}, $\pi_i$ denote the projections from the various factors of $\prod_{i=1}^{3} S^{[n_i]}$, and
\begin{align*}
E_{n_1,n_2,n_3}:= \sum_{i=1}^{3} \pi_i^* T_{S^{[n_i]}}  + \sum_{i \neq j} \Bigg( \chi(a_j-a_i) \otimes \O - R\hom_\pi(\I_i(a_i),\I_j(a_j)) \Bigg) \otimes \mathfrak{T}_j \otimes \mathfrak{T}_i^{-1},
\end{align*}
was $\mathfrak{T}_i$ were defined in \eqref{Tfrak} in terms of $\s_1,\s_2$. The generating function $\sfZ_S$ is normalized, i.e. it satisfies
$$
\sfZ_S(a_1,a_2,a_3,s_1,s_2,y,q) \in 1 + q \, \Q(\!(s_1,s_2)\!)[y][[q]].
$$
We define the following normalization term
\begin{align*}
n_S(a_1,a_2,a_3,s_1,s_2) = &\, s_1^{-1 + \sum_{i \geq 1} \chi(a_i)} s_2^{-1 + \sum_{i \geq 2} \chi(a_i)} \prod_{1\leq i<j \leq 3} \frac{1}{(T_j-T_i)^{\chi(a_j)}} \\
&\times \prod_{i \neq j} \frac{1}{\sfT_{-y}^{\C^* \times \C^*}(\chi(a_j-a_i) \otimes \mathfrak{T}_j \otimes \mathfrak{T}_i^{-1},1-y)}. 
\end{align*}
\end{definition}

Let $S$ be a surface satisfying $b_1 = 0$, $p_g > 0$, and suppose the assumptions of Theorem \ref{mocthm} are satisfied. Then $\chi_{-y}(M_S^H(3,c_1,c_2))$ is given by the coefficient of $x^{\vd}$ of the following series
\begin{align}
\begin{split} \label{keyeqn}
&3\sum_{{\scriptsize{\begin{array}{c} c_1 = a_1 + a_2 + a_3 \\ a_1 H < \frac{1}{2}(a_2+a_3)H \\ a_2 H < a_3 H \end{array}}}} \SW(a_1) \, \SW(a_2) \, \Res_{s_2=0} \, \Res_{s_1=0}  \\
&\qquad\times x^{-8 \chi + 2\sum_{i<j} a_i a_j - 2 \sum_i a_i^2} \, n_S(a_1,a_2,a_3,s_1,s_2) \, \sfZ_S(a_1,a_2,a_3,s_1,s_2,x^6).
\end{split}
\end{align}

Let us go back to an arbitrary surface $S$ and arbitrary $a_1,a_2,a_3 \in A^1(S)$. Then $\sfZ_S(a_1,a_2,a_3,s_1,s_2,q)$ has two significant properties:
\begin{itemize}
\item As a power series in $q$, the coefficients of $\sfZ_S(a_1,a_2,a_3,s_1,s_2,q)$ are universal polynomials in $$a_1^2,a_1a_2,a_2^2,a_2a_3,a_3^2,a_1a_3,a_1K,a_2K,a_3K,K^2,\chi,$$ where $K:=K_S$ and $\chi:=\chi(\O_S)$. This essentially follows from \cite{EGL}.
\item Suppose $S = S' \sqcup S^{''}$ is a disjoint union of two surfaces and $a_i = a_i' + a_i''$ with $a_i \in A^1(S')$, $a_i''\in A^1(S'')$. Then
$$
\sfZ_S(a_1,a_2,a_3,s_1,s_2,q) = \sfZ_{S'}(a_1',a_2',a_3',s_1,s_2,q) \sfZ_{S''}(a_1'',a_2'',a_3'',s_1,s_2,q).
$$
This essentially follows from the property $\sfT_y(E_1 + E_2,t) = \sfT_y(E_1,t) \sfT_y(E_2,t)$ discussed in Section \ref{Donaldson}.
\end{itemize}

These facts imply the following proposition (for the proof, see  \cite[Prop.~3.3]{GK1}).
\begin{proposition} \label{univ}
There exist universal functions 
$$
A_1(s_1,s_2,y,q), \ldots, A_{11}(s_1,s_2,y,q) \in 1 + q \, \Q[y](\!(s_1,s_2)\!)[[q]]
$$ 
such that for any smooth projective surface $S$ and $a_1, a_2, a_3 \in A^1(S)$ we have
$$
\sfZ_{S}(a_1,a_2,a_3,s_1,s_2,y,q) =  A_1^{a_1^2} A_2^{a_1a_2} A_3^{a_2^2} A_4^{a_2a_3} A_5^{a_3^2} A_6^{a_1a_3} A_7^{a_1K} A_8^{a_2K} A_9^{a_3K} A_{10}^{K^2} A_{11}^{\chi}.
$$
\end{proposition}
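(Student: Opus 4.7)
The plan is to combine the two bulleted properties stated just before Proposition \ref{univ} by a standard linearity (cobordism-style) argument. Let me denote the eleven intersection numbers by
\[
u_1 = a_1^2, \ u_2 = a_1 a_2, \ u_3 = a_2^2, \ u_4 = a_2 a_3, \ u_5 = a_3^2, \ u_6 = a_1 a_3, \ u_7 = a_1 K, \ u_8 = a_2 K, \ u_9 = a_3 K, \ u_{10} = K^2, \ u_{11} = \chi.
\]
Each $u_i$ is additive under disjoint union: if $S = S' \sqcup S''$ and $a_i = a_i' + a_i''$ then $u_i(S,a_\bullet) = u_i(S',a_\bullet') + u_i(S'',a_\bullet'')$.

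The first step is to invoke the Ellingsrud-G\"ottsche-Lehn machinery \cite{EGL} to see that each coefficient of $\sfZ_S$ in $q, y, s_1, s_2$ is a universal polynomial $P_{n,k,i,j}(u_1,\dots,u_{11})$. Concretely, the integrand of $\sfZ_S$ is built from Chern classes of tautological bundles $\O(a_i)^{[n_i]}$, Hilbert scheme tangent bundles, and the complexes $R\hom_\pi(\I_i(a_i), \I_j(a_j))$, all of which are polynomial-tautological in the sense of \cite{EGL}; their integrals over $\prod_i S^{[n_i]}$ are therefore polynomials in Chern numbers of $S$ and the $a_i$, i.e.\ in $u_1,\dots,u_{11}$ (there are no other Chern numbers of $S$ because $b_1=0$ is not assumed but the only topological invariants of $S$ that can appear through $T_S$ are $K^2$ and $\chi$, by Noether's formula and universality).

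The second step is the multiplicativity property. Since $\sfT_{-y}^{\C^*\times\C^*}(E_1 + E_2, 1-y) = \sfT_{-y}^{\C^*\times\C^*}(E_1, 1-y)\cdot \sfT_{-y}^{\C^*\times\C^*}(E_2, 1-y)$ and Euler classes are multiplicative, one checks directly that
\[
\sfZ_S(a_1,a_2,a_3,s_1,s_2,y,q) = \sfZ_{S'}(a_1',a_2',a_3',s_1,s_2,y,q)\, \sfZ_{S''}(a_1'',a_2'',a_3'',s_1,s_2,y,q)
\]
under a disjoint union decomposition (the Hilbert scheme of a disjoint union splits as a product, compatibly with all tautological constructions). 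Taking the formal logarithm (valid since $\sfZ_S \in 1 + q\,\Q[y](\!(s_1,s_2)\!)[[q]]$), each coefficient of $\log \sfZ_S$ is a universal polynomial $L_{n,k,i,j}(u_1,\dots,u_{11})$ satisfying
\[
L_{n,k,i,j}(u'+u'') = L_{n,k,i,j}(u') + L_{n,k,i,j}(u'')
\]
for every pair of vectors $u',u''$ realized by surfaces-with-divisors. The realized locus is Zariski dense in $\Q^{11}$ (by taking disjoint unions of sufficiently many $\PP^2$'s, $\PP^1\times\PP^1$'s, their blow-ups, and tensor powers, one realizes an open cone), so $L_{n,k,i,j}$ is an additive polynomial and therefore linear: $L_{n,k,i,j}(u) = \sum_{\ell=1}^{11} c^{(\ell)}_{n,k,i,j}\, u_\ell$ for some rational constants $c^{(\ell)}_{n,k,i,j}$.

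To conclude, define
\[
\log A_\ell (s_1,s_2,y,q) := \sum_{n,k,i,j} c^{(\ell)}_{n,k,i,j}\, q^n y^k s_1^i s_2^j \qquad (\ell = 1,\dots,11),
\]
which lies in $q\,\Q[y](\!(s_1,s_2)\!)[[q]]$ since $L_{0,0,0,0} = 0$ (as $\sfZ_S$ has constant term $1$), and set $A_\ell = \exp(\log A_\ell) \in 1 + q\,\Q[y](\!(s_1,s_2)\!)[[q]]$. Exponentiating the linear expression for $\log \sfZ_S$ gives the claimed product formula. The only genuinely subtle point is the Zariski density of the realized locus of $u$-vectors, and this is handled exactly as in \cite[Prop.~3.3]{GK1}, to which I would refer for the detailed density argument.
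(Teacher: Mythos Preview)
Your proposal is correct and follows exactly the approach the paper indicates: the paper does not give its own proof but simply observes that the two bulleted properties (universality of coefficients via \cite{EGL} and multiplicativity under disjoint union) yield the result by the standard linearity argument, referring to \cite[Prop.~3.3]{GK1} for details. You have spelled out that argument faithfully, including the log/exp step and the density consideration, so nothing is missing.
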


\subsection{Computer verifications} \label{verif}

Let $S$ be a smooth projective surface satisfying $b_1 = 0$ and $p_g>0$. When the assumptions of Theorem \ref{mocthm} are satisfied, formula  \eqref{keyeqn} expresses $\chi_{-y}^{\vir}(M_S^H(3,c_1,c_2))$ in terms of $\SW(a)$ and $\sfZ_S(a_1,a_2,a_3,s_1,s_2,q)$. Seiberg-Witten invariants of algebraic surfaces satisfying $b_1 = 0$ and $p_g>0$ are often rather easy to calculate. E.g.~when $S$ is minimal of general type, the Seiberg-Witten basic classes are $0$ and $K$ and 
$$
\SW(0) = 1, \qquad \SW(K) = (-1)^{\chi}.
$$ 
The generating function $\sfZ_S(a_1,a_2,a_3,s_1,s_2,q)$ is determined by eleven universal functions $A_i$ (Proposition \ref{univ}). 

Since $\sfZ_S(a_1,a_2,a_3,s_1,s_2,q)$ is defined for \emph{any} surface $S$, the universal functions are determined by
\begin{align*}
(S,a_1,a_2,a_3) = &(\PP^2,\O,\O,\O), \\ 
& (\PP^1 \times \PP^1,\O,\O,\O), \\
&(\PP^1 \times \PP^1,\O(-1,0),\O,\O), \\ 
&(\PP^1 \times \PP^1,\O,\O(-1,0),\O), \\ 
&(\PP^1 \times \PP^1,\O,\O,\O(-1,0))\\
& (\PP^1 \times \PP^1,\O(-1,1),\O,\O), \\
& (\PP^1 \times \PP^1,\O,\O(-1,1),\O), \\
& (\PP^1 \times \PP^1,\O,\O,\O(-1,1)), \\
&(\PP^1 \times \PP^1,\O(-1,0),\O(0,-1),\O), \\ 
&(\PP^1 \times \PP^1,\O(-1,0),\O,\O(0,-1)), \\
&(\PP^1 \times \PP^1,\O,\O(-1,0),\O(0,-1)).
\end{align*}

Then $(a_1^2,a_1a_2,a_2^2,a_2a_3,a_3^2,a_1a_3,a_1K,a_2K,a_3K,K^2,\chi)$ determines an $11 \times 11$ invertible matrix. Moreover, we can use the $\C^* \times \C^*$ torus action and Atiyah-Bott localization to express $\sfZ_S(a_1,a_2,a_3,s_1,s_2,q)$ in terms of sums over torus fixed points of $\prod_{i=1}^{3} S^{[n_i]}$, which are indexed by partitions. This turns the calculation of $\sfZ_S(a_1,a_2,a_3,s_1,s_2,q)$ into a purely combinatorial problem, which can be implemented in Maple or Pari/GP. For details see \cite[Sect.~4]{GK1}. This allows us to determine $A_i(s_1,s_2,y,q)$ up to the following orders in $s_1,s_2,y,q$:

\begin{itemize}
\item $A_i(s_1,s_2,1,q)$ up to (and including) orders $(35,40,12)$ in $(s_1,s_2,q)$, 
\item  $A_i(s_1,s_2,y,q)$ up to (and including) orders $(19,24,2,6)$ in $(s_1,s_2,y,q)$.
\end{itemize}

\begin{remark}
If $M$ is a proper $\C$-scheme with perfect obstruction theory of virtual dimension $d$, then $\chi_{-y}^{\vir} \in \Z[y]$ has degree $\leq d$ and satisfies $\chi_{-y}^{\vir}(M) = y^d \chi_{-1/y}^{\vir}(M)$ \cite[Thm.~4.5, Rem.~4.13]{FG}. Therefore
\begin{align*}
\chi_{-y}^{\vir}(M) \mod y^3, \qquad e^{\vir}(M) = \chi_{-1}(M)
\end{align*}
determine $\chi_{-y}^{\vir}(M)$ when $d \leq 7$. 
\end{remark}

We use this data to verify Conjecture \ref{conj1} in various examples by using Theorem \ref{mocthm}, equation \eqref{keyeqn}, and Proposition \ref{univ}. Suppose $S,H,c_1,c_2$ are chosen such that there exist no rank 3 strictly Gieseker $H$-semistable sheaves on $S$ with Chern classes $c_1,c_2$ and suppose 
\begin{equation*}
c_2 < \frac{1}{2}c_1(c_1-K) + 2\chi,
\end{equation*}
which is condition (ii) of Theorem \ref{mocthm}. In most examples, we also assume
\begin{equation} \label{polarineq}
\frac{1}{3} Hc_1> HK,
\end{equation}
which, in these examples, implies conditions (iii) and (iv) of Theorem \ref{mocthm}. In some examples, indicated by $\star$, we do not assume \eqref{polarineq} in which case we assume Remark \ref{assumpmocthm} holds (strong form of Mochizuki's formula). 

We verified Conjecture \ref{conj1} in the following cases:
\begin{enumerate}
\item $S=K3$ and
\begin{itemize}
\item $\vd \leq 8$, 
\item $y=1$ and $\vd \leq 20$,
\end{itemize}
\item $S$ is $K3$ blown up in a point and 
\begin{itemize}
\item $\vd \leq 8$, 
\item $y=1$, and $\vd \leq 20$,
\end{itemize}
\item $S$ is $K3$ blown up in two points and 
\begin{itemize} 
\item $\vd \leq 10$,
\item $y=1$, and $\vd \leq 14$,
\end{itemize}
\item $S$ is an elliptic surface and
\begin{itemize}
\item $S$ of type\footnote{An elliptic surface $S \rightarrow \PP^1$ is of type $E(n)$ when it has a section, $12n$ 1-nodal singular fibres, and no further singular fibres. We denote the class of its fibre by $F$.} $E(3)$ and $\vd \leq 6$,
\item $S$ of type $E(4)$ and $\vd \leq 4$,
\item $S$ of type $E(5)$ and $\vd \leq 2$,
\item $S$ of type $E(3)$, $c_1$ satisfies $c_1F \equiv 1,2 \mod 3$, $y=1$, and $\vd \leq 18$, 
\item $S$ of type $E(4)$ or $E(5)$, $y=1$, and $\vd \leq 16$,
\end{itemize}
\item $S$ is the blow up of an elliptic surface of type $E(3)$ in a point, $y=1$, and $\vd \leq 20$,
\item $S$ is a double cover of $\PP^2$ branched along a smooth octic, $y=1$, and $\vd \leq 4$,
\item[$^{\star}(7)$] $S$ a double cover of $\PP^2$ branched along a smooth octic and
\begin{itemize}
\item$\vd \leq 4$, 
\item $y=1$ and $\vd \leq 12$,
\end{itemize}
\item[(8)] $S$ is the blow up of a double cover of $\PP^2$ branched along a smooth octic, $y=1$, and $\vd \leq 4$,
\item[$^{\star}(9)$] $S$ is the blow up of a double cover of $\PP^2$ branched along a smooth octic and 
\begin{itemize}
\item $\vd \leq 4$,
\item $y=1$ and $\vd \leq 8$,
\end{itemize}
\item[$^{\star}(10)$] $S$ is a double cover of $\PP^1 \times \PP^1$ branched along a smooth curve of bidegree $(6,6)$, $y=1$, and $\vd \leq 6$,
\item[(11)] $S$ is the blow-up of a double cover of $\PP^1 \times \PP^1$ branched along a smooth curve of bidegree $(6,6)$, $y=1$, and $\vd \leq 6$,
\item[$^{\star}(12)$] $S$ is the blow-up of a double cover of $\PP^1 \times \PP^1$ branched along a smooth curve of bidegree $(6,6)$, $y=1$, and $\vd \leq 8$,
\item[$^{\star}(13)$] $S$ is a smooth quintic surface in $\PP^3$, $y=1$, and $\vd \leq 4$,
\item[$^{\star}(14)$] $S$ is the blow up of a smooth quintic surface in $\PP^3$, $y=1$, and $\vd \leq 6$.
\end{enumerate}

For $y=1$, this list contains several verifications for surfaces satisfying $K^2>0$. For general $y$, the only verification for surfaces satisfying $K^2>0$ are (7) and (9). In order to get further evidence, we turn our attention to a numerical version of Conjecture \ref{conj1}.

Suppose $S$ is a surface satisfying $b_1=0$, $p_g>0$, and its only Seiberg-Witten basic classes are $0$ and $K \neq 0$. Then $\SW(0) = 1$, $\SW(K) = (-1)^{\chi}$ and the formula of Conjecture \ref{conj1} only depends on 
$$
(\beta_1,\beta_2,\beta_3,\beta_4):=(c_1^2, c_1 K, K^2, \chi).
$$
So for each $v:=\vd$, the formula for $\overline{\chi}_{-y}^\vir(M)$ of Conjecture \ref{conj1} gives an explicit universal function
\begin{equation} \label{F}
(\beta_1,\beta_2,\beta_3,\beta_4) \mapsto F_v(\beta_1,\beta_2,\beta_3,\beta_4,y) \in \Q[y^{\pm\frac{1}{2}}].
\end{equation}
Assume the strong form of Mochizuki's formula (Remark \ref{assumpmocthm}) and $S$ is a surface as above. Multiplying by $y^{-\frac{v}{2}}$, expression \eqref{keyeqn} is also a universal function in $v$ and $(\beta_1,\beta_2,\beta_3,\beta_4)$, which we denote by
\begin{equation} \label{G}
(\beta_1,\beta_2,\beta_3,\beta_4) \mapsto G_v(\beta_1,\beta_2,\beta_3,\beta_4,y)  \in \Q[y^{\pm \frac{1}{2}}].
\end{equation}
This follows from Proposition \ref{univ}. Turning away from geometric examples, one can wonder whether \eqref{F} equals \eqref{G} for all $(\beta_1,\beta_2,\beta_3,\beta_4) \in \Z^4$ and $v \geq 0$. This turns out to be false. However, we conjecture the following:
\begin{conjecture} \label{numconj}
For all $(\beta_1,\beta_2,\beta_3,\beta_4) \in \Z^4$ and $v \geq 0$ satisfying
\begin{align}
\begin{split} \label{conditions}
\beta_1 &\equiv \beta_2 \mod 2, \\
0 &\leq v \leq \beta_1 - 3\beta_2 +4 \beta_4, \\
\beta_3 &\geq \beta_4 - 3, \quad \beta_3 \geq -1,
\end{split}
\end{align}
we have $F_v(\beta_1,\beta_2,\beta_3,\beta_4,y) = G_v(\beta_1,\beta_2,\beta_3,\beta_4,y)$.
\end{conjecture}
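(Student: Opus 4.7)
The strategy is to reduce Conjecture \ref{numconj}, for each fixed virtual dimension $v$, to a polynomial identity in the four variables $(\beta_1,\beta_2,\beta_3,\beta_4)$, and then to verify that identity by combining direct polynomial interpolation with the surface-by-surface verifications already carried out in (1)--(14). The first task is to make the polynomial structure of both sides manifest. For $F_v$ the explicit formula of Conjecture \ref{conj1} collapses, once one imposes that the only Seiberg--Witten basic classes are $0$ and $K$, to a finite sum over $(a,b)\in\{0,K\}^2$; each summand is visibly polynomial in $K^2=\beta_3$, $\chi=\beta_4$, and $c_1^2=\beta_1$, while the dependence on $c_1K=\beta_2$ enters only through $\epsilon^{(a-b)c_1}$ (hence through $\beta_2\bmod 3$). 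For $G_v$, Proposition \ref{univ} writes $\sfZ_S$ as a monomial expression in the universal series $A_1,\dots,A_{11}$ whose exponents are themselves polynomials in $(a_i\cdot a_j,\,a_iK,\,K^2,\,\chi)$; substituting $(a_1,a_2)\in\{0,K\}^2$, $a_3=c_1-a_1-a_2$, and applying $\Res_{s_2=0}\Res_{s_1=0}$ then yields an element of $\Q[y^{\pm 1/2}][\beta_1,\beta_2,\beta_3,\beta_4]$. Explicit degree bounds $D(v)$ in each $\beta_i$ can be read off of \eqref{keyeqn}, placing both $F_v$ and $G_v$ in a common finite-dimensional space $V_v$.

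The second step is to pin down the coefficients in $V_v$. Each verified entry in the list (1)--(14) that does not rely on the strong form of Mochizuki's formula (i.e.\ is not flagged by $\star$) provides a tuple $(\beta_1,\beta_2,\beta_3,\beta_4)$ at which $F_v=G_v$ is known unconditionally. For small $v$ the K3, blown-up K3, and elliptic-surface examples already span enough of the $\beta$-parameter space to determine all coefficients in $V_v$, and the polynomial identity then extends to arbitrary tuples satisfying \eqref{conditions}. Beyond what geometric examples supply, the same computation can be run purely formally: plug chosen numerical $4$-tuples into the known expansions of the $A_i$, compute $G_v$, and compare to the corresponding evaluation of $F_v$. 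This reduces Conjecture \ref{numconj} within any fixed range of $v$ to a finite computer-algebra check, whose size is controlled by $\dim V_v$ and the available precision of the $A_i$ (presently orders $(19,24,2,6)$ in $(s_1,s_2,y,q)$ for general $y$, and much higher orders at $y=1$).

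\textbf{Main obstacle.} The above recipe only proves the conjecture up to the precision of the $A_i$; a genuine proof for all $v$ requires a structural identity at the level of generating functions rather than coefficient-by-coefficient matching. The natural approach is to identify both $F_v$ and $G_v$ with specializations of a common rank-$3$ generating function built from the nested Hilbert scheme expressions of Gholampour--Thomas and Laarakker \cite{GT1,GT2,Laa}, factoring the residue calculation in Mochizuki's formula through the same $A_2^\vee$-theta-function structure that governs the right-hand side of Conjecture \ref{conj1}. The hard part is that, in contrast to the rank-$2$ setting of \cite{GK1,GK2} where a single theta constant sufficed, here one must match the quadratic relation defining $Z_\pm$ against a residue in two equivariant parameters $s_1,s_2$; finding an explicit change of variables that simultaneously diagonalizes the $(\mathfrak{T}_1,\mathfrak{T}_2,\mathfrak{T}_3)$-structure of \eqref{Tfrak} and produces the $A_2^\vee$-lattice sums of the conjectured formula is the central analytic obstruction.
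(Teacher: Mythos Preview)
The statement is a \emph{conjecture} in the paper, and the paper offers no proof. Its entire support for Conjecture \ref{numconj} is a list of direct computer checks at specific integer tuples $(\beta_3,\beta_4)$ with $v$ bounded by $4$ or $6$; there is no interpolation argument, no structural identity, and no claim that the statement has been established in any range. So there is no ``paper's own proof'' to compare your proposal against.

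Your proposal is likewise not a proof, and you say so in the ``Main obstacle'' paragraph: interpolation can at best reach the precision to which the $A_i$ are known. But even the finite-range part of your plan has gaps. First, you assert that $G_v$ is polynomial in $(\beta_1,\beta_2,\beta_3,\beta_4)$ with an explicit degree bound; the paper only calls it a ``universal function''. The normalization term $n_S$ in \eqref{keyeqn} carries factors such as $s_1^{-1+\sum_i\chi(a_i)}$ and $(T_j-T_i)^{-\chi(a_j)}$ whose exponents are affine in the $\beta_i$, so after $\Res_{s_2=0}\Res_{s_1=0}$ the dependence on those exponents is not obviously polynomial, and no degree bound $D(v)$ has been justified. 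Second, your claim that the unstarred entries in (1)--(14) furnish evaluation points for $F_v=G_v$ is incorrect: $F_v$ and $G_v$ are defined by specializing to the case where the only Seiberg--Witten basic classes are $0$ and $K\neq 0$, whereas the $K3$ and $E(n)$ examples have $K=0$ or many more basic classes. Those verifications confirm Conjecture \ref{conj1}, not the specialized identity $F_v=G_v$. The only geometric examples matching the hypotheses are the double covers and quintics, most of which are starred. Third, the ``purely formal'' checks you describe---plugging numerical tuples into the known $A_i$---are precisely what the paper already does to produce its evidence list; they do not add an independent mechanism.
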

The first equality of \eqref{conditions} corresponds to $c_1^2 \equiv c_1 K \mod 2$. The second inequality corresponds to the essential condition (ii) in Mochizuki's theorem. The last two inequalities were found to be necessary from computer experiments. 

\begin{remark}
Conjecture \ref{numconj} and the strong form of Mochizuki's fomula (Remark \ref{assumpmocthm}) imply Conjecture \ref{conj1} for surfaces $S$ satisfying $b_1=0$, $p_g>0$, and whose only SW basic classes are $0$, $K\neq0$. The rank 2 analog of this statement was proved in \cite[Prop.~6.3]{GK1}. The same proof applies to the rank 3 case.
\end{remark}

We checked Conjecture \ref{numconj} in the following cases, for many values of $\beta_1, \beta_2$:
\begin{enumerate}
\item $\beta_3=1$, $\beta_4=0$, and $v \leq 6$,
\item $\beta_3=1$, $\beta_4=1$, and $v \leq 6$,
\item $\beta_3=1$, $\beta_4=2$, and $v \leq 4$,
\item $\beta_3=1$, $\beta_4=3$, and $v \leq 4$,
\item $\beta_3=2$, $\beta_4=0$, and $v \leq 2$,
\item $\beta_3=2$, $\beta_4=1$, and $v \leq 4$,
\item $\beta_3=2$, $\beta_4=2$, and $v \leq 4$,
\item $\beta_3=2$, $\beta_4=3$, and $v \leq 4$.
\end{enumerate}
Cases (1) and (5) do not correspond to smooth projective surfaces (their Euler characteristic is negative, yet $K^2 > 0$). For cases (2) and (6), there (obviously) are no smooth projective surfaces with $b_1=0$ and $p_g>0$. Interestingly, there are minimal surfaces of general type satisfying $b_1=0$ and
\begin{itemize}
\item $p_g=1, K^2=1$ (case (3)) by Kanev \cite{Kyn},
\item $p_g=1, K^2=2$ (case (7)) by Catanese-Debarre \cite{CD}, 
\item $p_g=2, K^2=1,2$ (cases (4), (8)) by Persson \cite[Prop.~3.23]{Per}.
\end{itemize}

\section{Monopole branch} \label{monsec}

In \cite{MT}, Maulik-Thomas introduce $y$-refined $\SU(r)$ Vafa-Witten invariants $$\VW_S^H(r,c_1,c_2,y).$$ See also \cite{Tho}. As mentioned in the introduction, on the instanton branch the definition reduces to virtual $\chi_y$-genus. On the monopole branch the definition is more subtle. 
The evidence we present for Conj.~\ref{conj2} and Rem.~\ref{rank2conj} in this section comes entirely from calculations by Laarakker \cite{Laa} and Thomas \cite{Tho}.

\begin{remark}
We initially found Conjecture \ref{conj2} as follows. First we obtained an unrefined version of Conjecture \ref{conj1} using computer experiments and Mochizuki's formula as described in Section \ref{chiysec}. The modularity transformation \eqref{modunref} from the physics literature \cite{VW, LL} swaps (part of) the instanton contribution with the monopole contribution. Together with the formula of Conjecture \ref{conj1}, this gives a natural guess for the monopole contribution. We learned this trick from Dijkgraaf-Park-Schroers \cite{DPS} who used it in the rank 2 case in order to find the instanton formula from the monopole formula. Finally, we made a $y$-refinement of each step.
\end{remark}

\subsection*{Smooth moduli spaces}

Suppose $(S,H)$ is a polarized surface such that $b_1=0$, $|K|$ contains a smooth connected canonical curve, and any line bundle $L$ on $S$ satisfying $0 \leq \deg L \leq \frac{1}{2} \deg K$ is trivial.

The monopole branch of $N_S^H(2,c_1,c_2)^{\C^*}$ is smooth if and only if $c_2 \leq 3$ \cite{TT1}. Similarly, the monopole branch of $N_S^H(3,c_1,c_2)^{\C^*}$ is smooth if and only if $c_2 \leq 2$ \cite{Laa}. In these cases, the monopole contribution to the Vafa-Witten invariants can be calculated directly by intersection theory on the moduli space.
\begin{theorem}[Thomas]
Let $S$ be as above. For $c_2=0,1,2$, the contribution of the monopole branch to $\VW_S^H(2,K,c_2,y)$ is given by Remark \ref{rank2conj}.
\end{theorem}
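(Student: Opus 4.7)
The plan is to reduce the computation to explicit intersection theory on a smooth component of the fixed locus and then match Fourier coefficients term-by-term with the conjectural formula in Remark \ref{rank2conj}.

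First, I would identify the monopole branch of $N_S^H(2,K,c_2)^{\C^*}$ as a concrete smooth moduli space. The hypotheses on $S$ (no line bundles of degree in $[0,\tfrac{1}{2}\deg K]$, and $|K|$ containing a smooth connected curve $C$) force every $\C^*$-fixed Higgs pair $(E,\phi)$ with $\phi\neq 0$ to arise from an extension
$$
0 \To \O(D_1) \otimes I_{Z_1} \To E \To \O(K-D_1) \otimes I_{Z_2} \To 0
$$
whose Higgs field is concentrated on the canonical curve, so that the component is identified with a (nested) Hilbert scheme of points on $S$ and/or on $C$, of total length $c_2$. For $c_2=0,1,2$ this component is smooth and low-dimensional by \cite{TT1}, and can be described completely: for $c_2=0$ it is a point, for $c_2=1$ it is essentially $S$ (or a blow-up thereof), and for $c_2=2$ it involves $S^{[2]}$-type and nested-Hilbert pieces.

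Next, I would assemble the virtual normal bundle $N^{\vir}$ of this fixed component inside $N_S^H(2,K,c_2)$, using the Tanaka-Thomas deformation-obstruction theory of Higgs pairs, and then pass to the Maulik-Thomas $y$-refinement. The refinement twists each $\C^*$-weight piece of $N^{\vir}$ by an appropriate power of $y^{1/2}$ coming from the canonical orientation/square-root data of \cite{MT}. Because the fixed component is smooth, the localized K-theoretic Euler class $\sfT_{-y}(N^{\vir})^{-1}$ is just a concrete rational function in $y^{1/2}$ and the Chern roots of tangent and tautological bundles on Hilbert schemes; no delicate virtual corrections are needed.

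I would then compute the integral
$$
\int_{\text{monopole component}} \hat{A}\text{-type class} \cdot \sfT_{-y}(N^{\vir})^{-1}
$$
by a direct Chern-class calculation on $S^{[n]}$ and on symmetric-product-type loci over $C$, using standard identities of the form $c(L^{[n]})$ and Lehn-type formulae. Because only $c_2=0,1,2$ occur, this reduces to at most a few explicit intersection numbers on $S$, $C$, $S^{[2]}$ and on nested Hilbert schemes.

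Finally, I would Taylor-expand the conjectural formula of Remark \ref{rank2conj} around $q=0$, extract the coefficients of $q^{(\vd+\chi)/4}$ for $c_2=0,1,2$, and check equality with the numbers produced in the previous step. The key identities needed for this match are the leading-order expansions of $\theta_3, \theta_2, \phi_{-2,1}$, and $\Delta$, together with the dependence on $(K^2,\chi,KH,aK)$ for Seiberg-Witten basic classes $a\in\{0,K\}$. The main obstacle I expect is a careful bookkeeping step: getting the $y^{1/2}$-weights of the refined virtual normal bundle correct, in particular the contribution of the scaling $\C^*$-weight on $\phi$ and the square-root choice in \cite{MT}, so that the signs and half-integer powers on both sides genuinely agree rather than merely agree up to an overall unit. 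Once the $c_2=0$ case is pinned down (which fixes any global normalization), the $c_2=1,2$ checks become purely computational.
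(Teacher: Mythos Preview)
This theorem is not proved in the paper; it is attributed to Thomas and cited from \cite{Tho}. The paper merely records the statement together with the explicit predictions of Remark~\ref{rank2conj} for $c_1=K$ and $c_2=0,1,2$, so there is no in-paper argument to compare your proposal against.

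Your outline is broadly in the spirit of what Thomas does in \cite{TT1,Tho}: identify the monopole component explicitly for small $c_2$ using the constraint on line bundles on $S$, compute the virtual normal bundle from the Higgs-pair obstruction theory, and evaluate the localized $K$-theoretic class. A few points where your sketch is imprecise relative to the actual argument: the $\C^*$-fixed Higgs pairs with $\phi\neq 0$ and $c_1=K$ are not parametrized by nested Hilbert schemes of points on $S$ in the way you suggest, but rather (after the reductions in \cite{TT1}) by $|K|$-twisted data supported on the canonical curve $C$, so the relevant moduli for $c_2=1,2$ are symmetric products of $C$ and related loci rather than $S$ or $S^{[2]}$; and the refined invariant is defined in \cite{MT,Tho} via a twisted symmetrized $K$-theoretic Euler class (Nekrasov--Okounkov type), not literally $\sfT_{-y}(N^{\vir})^{-1}$ integrated against an $\hat A$-class. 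These are not fatal to your plan, but they mean the concrete computations you would carry out differ from what you have written. Since the paper itself offers no proof, the honest statement here is simply that the result is due to Thomas, with the detailed calculation in \cite{Tho}.
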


\begin{theorem}[Laarakker]
Let $S$ be as above. For $c_2=0,1,2$, the contribution of the monopole branch to $\VW_S^H(3,K,c_2,y)$ is given by Conjecture \ref{conj2}.
\end{theorem}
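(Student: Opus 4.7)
The plan is to exploit the smoothness of the monopole branch for $c_2 \leq 2$, reduce the refined Vafa-Witten invariant to a concrete intersection computation on a nested Hilbert scheme, and then match coefficients against the low-order expansion of Conjecture \ref{conj2}.

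First, I would invoke the description of $\C^*$-fixed stable Higgs pairs $(E,\phi)$ with $\phi\neq 0$ due to Gholampour-Thomas \cite{GT1,GT2}: $E$ splits as a sum of weight pieces $E = \bigoplus_i E_i$ with $\phi$ raising weight by one, so the data of a fixed point amounts to a chain of sheaves together with twisting maps determined by $K$. Under the hypotheses on $S$ (no non-trivial line bundle of degree between $0$ and $\frac{1}{2}\deg K$, and $|K|$ containing a smooth connected curve), the allowed weight filtrations on the monopole branch with $c_1 = K$ are rigidly constrained: the line-bundle summands of the graded pieces are forced into a small explicit list, and what remains free is a choice of nested zero-dimensional subschemes recording colengths along the Higgs chain, together with a section of $|K|$. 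Combined with the bound $c_2 \leq 2$, this pins the fixed locus down to a smooth, explicitly described nested Hilbert scheme.

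Next, I would apply the refined virtual localization formula of Maulik-Thomas \cite{MT,Tho}, expressing the monopole contribution to $\VW_S^H(3,K,c_2,y)$ as an integral of the $y$-refined equivariant Euler class of the virtual normal bundle against the virtual cycle of the fixed locus. Because the fixed locus is smooth and its induced obstruction theory is pulled back from the Higgs obstruction theory, the integrand is a polynomial in Chern classes of tautological bundles on the nested Hilbert scheme. By Laarakker's universality principle \cite{Laa}, analogous to Proposition \ref{univ} of the present paper, the resulting integrals depend only on $\chi(\O_S)$ and $K^2$ (since $c_1 = K$). These universal coefficients can then be pinned down by toric Atiyah-Bott localization on $\PP^2$ and $\PP^1\times\PP^1$ with appropriate line-bundle twists, in direct parallel with Section \ref{chiysec}. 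This is the computational core, carried out by Laarakker.

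Finally, one expands the right-hand side of Conjecture \ref{conj2} as a power series in $q$: the exponent $\vd/6 = c_2 - c_1^2/3 - 4\chi/3$ together with the prefactor $q^{-\chi/6 + K^2/8}$ pins down which Fourier coefficients correspond to $c_2 = 0,1,2$. Using $\SW(0)=1$, $\SW(K)=(-1)^\chi$, $\delta_{K+a,b}$ to reduce the double sum over SW basic classes to a single sum with $a\in\{0,K\}$, and the implicit quadratic defining $W_\pm$, one extracts the relevant $q$-coefficients and checks equality term-by-term with the universal polynomial in $\chi$ and $K^2$ produced in the previous step. The hard part will be the $y$-bookkeeping in the Maulik-Thomas refinement: beyond tracking $\C^*$-weights of the virtual normal bundle as in ordinary localization, one must carry the ``square-root'' refined structure of the symmetric Higgs obstruction theory through the computation so that the $y\to -1$ limit reproduces virtual Euler characteristics and the full $y$-dependence matches that sitting in $W_\pm(q^{1/2},y)$, $\Theta_{A_2,(1,0)}(q^{1/2},y)$, and $\phi_{-2,1}(q^3,y^3)$. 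The rank 2 calculation of \cite{Tho} provides an independent sanity check that this refined bookkeeping is consistent with the conjectured modular structure.
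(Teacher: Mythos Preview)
The paper does not supply its own proof of this statement; it is quoted as a result of Laarakker \cite{Laa} (in preparation). The only indication the paper gives of the method is the sentence preceding the theorem: since the monopole branch is smooth for $c_2 \leq 2$, ``the monopole contribution to the Vafa-Witten invariants can be calculated directly by intersection theory on the moduli space.'' The paper then records the explicit predictions for the $q^0,q^1,q^2$ coefficients.

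Your proposal is a plausible route, but it takes a detour through Laarakker's \emph{other} result in the same section, the monopole universality theorem, together with toric localization on $\PP^2$ and $\PP^1\times\PP^1$. That machinery is what the paper describes for the \emph{general} verification of Conjecture~\ref{conj2} (matching universal functions modulo $q^6$ for arbitrary $c_2$), presented separately from and in addition to the smooth-case theorem. For $c_2=0,1,2$ the fixed loci are explicit and low-dimensional (the $c_2=0$ component is a projective space $|K|\cong\PP^{p_g-1}$, and the higher ones are small bundles over it), so the intended argument is a direct evaluation of the Maulik--Thomas integrand on those specific spaces, not a reduction to toric surfaces. Invoking universality here is not wrong, but it is heavier than needed and has the awkward feature of proving one unpublished result of \cite{Laa} via another.

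Two smaller points. First, your claim that the answer ``depends only on $\chi(\O_S)$ and $K^2$'' is not what Laarakker's universality theorem says: that theorem involves the Seiberg--Witten classes and $\delta_{c_1,\sum i a_i}$ as well. The reduction to $(\chi,K^2)$ uses the specific hypotheses on $S$ (which force the basic classes to be $0$ and $K$) together with $c_1=K$; you should make that step explicit. Second, you do not address why only weight splittings of type $(1,1,1)$ contribute; for rank $3$ this is established by Thomas's cosection argument \cite{Tho}, as the paper notes, and is needed before one can identify the fixed locus with a nested Hilbert scheme.
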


For completeness we list the predictions of Remark \ref{rank2conj} and Conjecture \ref{conj2} for $c_1=K$ for ranks 2 and 3 respectively:
{\scriptsize{
\begin{align*}
&q^{-\chi - \frac{1}{6} K^2} \cdot (y^{\frac{1}{2}} + y^{-\frac{1}{2}})^{-\chi-K^2} \\
&\times \Bigg\{ 1-2K^2 q+\Big((-y+2K^2-2-y^{-1}) K^2 + (y^2 + 10 + y^{-2}) \chi \Big) q^2 + \cdots \Bigg\},
\end{align*}
\begin{align*}
&(-1)^{\chi} q^{-\frac{3}{2}\chi - \frac{5}{24} K^2} (y + 1 + y^{-1})^{-\chi-K^2} \Bigg\{ 1-(y+2+y^{-1})K^2 q  \\
&+\frac{1}{2} \Big(  (K^2-1)y^2 + (4K^2-2)y + 6K^2-6 + (4K^2-2)y^{-1} + (K^2-1) y^{-2}   \Big) K^2 q^2 + \cdots \Bigg\}.
\end{align*}
}}
The terms $q^0,q^1,q^2$ correspond to monopole components of the moduli space for $c_2 = 0,1,2$.

\subsection*{Monopole universality} 

Let $(S,H)$ be a polarized surface with $H_1(S,\Z) = 0$ and $p_g>0$. Suppose $r,c_1$ are chosen such that there are no rank $r$ strictly Gieseker $H$-semistable Higgs pairs on $S$ with first Chern class $c_1$. A Higgs pair $(E,\phi)$ on the monopole branch $N_S^H(r,c_1,c_2)^{\C^*}$ decomposes into eigensheaves with respect to the $\C^*$ action
$$
E = \bigoplus_{i} E_i,
$$
where finitely many $E_i \neq 0$. Higgs pairs with different sequences of ranks $\{r_i\}_{i}$ occur in different connected components of $N_S^H(r,c_1,c_2)^{\C^*}$. Denote the contribution of Higgs pairs with ranks $(1, \ldots, 1)$ to $\sfZ_{S,H,r,c_1}^{\mono}(q,y)$ by
$$
\sfZ_{S,H,r,c_1}^{(1^r)}(q,y).
$$
In \cite{GT1, GT2}, Gholampour and Thomas express this contribution in terms of virtual cycles on nested Hilbert schemes of curves and points on $S$ (see also \cite{GSY}). This leads to an expression in terms of Seiberg-Witten invariants of $S$ and intersection numbers on $S^{[n_1]} \times \cdots \times S^{[n_r]}$. Based on this result, Laarakker shows the following \cite{Laa}:
\begin{theorem}[Laarakker]
For any $r>1$, there exist universal Laurent series
$$
A^{(r)}(q,y), \qquad q^{-\frac{r}{24}}B^{(r)}(q,y), \qquad \{C_{ij}^{(r)}(q,y) \}_{1 \leq i \leq j \leq r-1}, 
$$
in $\Q(y^{\frac{1}{2}})(\!(q^{\frac{1}{2r}})\!)$ with the following property. Let $S$ be any smooth projective surface with polarization $H$ and satisfying $p_g>0$ and $H_1(S,\Z) = 0$. Let $H,r,c_1$ be chosen such that there exist no rank $r$ strictly Gieseker $H$-semistable Higgs pairs on $S$ with first Chern class $c_1$. Then 
\begin{align*}
\sfZ_{S,H,r,c_1}^{(1^r)}(q,y) = &\Big( A^{(r)} \Big)^{\chi}  \Big( B^{(r)} \Big)^{K^2} \!\!\!\!\! \!\! \sum_{a_1, \ldots, a_{r-1} \in H^2(S,\Z)} \!\! \delta_{c_1,\sum_{i=1}^{r-1} i a_i} \prod_{i=1}^{r-1} \SW(a_i) \prod_{i \leq  j} \Big( C_{ij}^{(r)} \Big)^{a_i a_j},
\end{align*}
where $\delta_{a,b}$ was defined in \eqref{deltadef}.
\end{theorem}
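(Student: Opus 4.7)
The plan is to follow the strategy already outlined in the paragraph preceding the theorem: apply the Gholampour--Thomas geometric description of the $(1^r)$-component of the monopole branch to reduce $\sfZ_{S,H,r,c_1}^{(1^r)}(q,y)$ to virtual intersection numbers on nested Hilbert schemes, then rewrite these as integrals over $\prod_{i=1}^r S^{[n_i]}$ weighted by Seiberg--Witten invariants, and finally extract universality via an Ellingsrud--G\"ottsche--Lehn argument combined with multiplicativity under disjoint unions.

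First, I would use the $\C^*$-fixed-locus description of the $(1^r)$-component. A $\C^*$-fixed Higgs pair $(E,\phi)$ with rank sequence $(1,\dots,1)$ decomposes as $E=\bigoplus_{i=1}^r L_i$ with $\phi$ inducing maps $L_i \to L_{i+1}\otimes K_S$, which are equivalent to effective divisors $D_i$ in linear systems on $S$. Writing $a_i := c_1(L_i/L_{i+1})$ and imposing $\det E \cong \O(c_1)$ gives the delta constraint $\delta_{c_1,\sum_i i a_i}$. By \cite{GT1,GT2,GSY}, the contribution to $\sfZ^{(1^r)}_{S,H,r,c_1}$ from such Higgs pairs is a sum over tuples $(a_1,\ldots,a_{r-1})$ of virtual integrals on a moduli space of nested pairs (flags of subschemes plus effective divisors) fibered over $\prod_i |a_i|$; a vanishing argument of Durr--Kabanov--Okonek type reduces the integrals over the linear systems to the Seiberg--Witten invariants $\SW(a_i)$, leaving an integral over $\prod_i S^{[n_i]}$.

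Second, I would show that after the Seiberg--Witten reduction the remaining integrand on $S^{[n_1]} \times \cdots \times S^{[n_r]}$ is a polynomial in Chern classes of tautological bundles $\O(a_i)^{[n_j]}$, universal ideal sheaf complexes $R\hom_\pi(\I_i(a_i),\I_j(a_j))$, and classes pulled back from $S$. By \cite{EGL} (together with its extensions to products of Hilbert schemes, as already used to prove Proposition \ref{univ}), any such integral is a universal polynomial in the pairings $a_i a_j$, $a_i K$, $K^2$, and $\chi(\O_S)$. Assembling the generating series, this gives
\[
\sfZ_{S,H,r,c_1}^{(1^r)}(q,y) = \sum_{a_1,\ldots,a_{r-1}} \delta_{c_1,\sum_i i a_i} \prod_i \SW(a_i)\, \Phi\bigl(\chi,K^2,\{a_i K\},\{a_i a_j\};q,y\bigr),
\]
for a single universal series $\Phi$ in these numerical inputs.

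Third, to promote $\Phi$ from polynomial-universal to the asserted exponential shape, I would invoke multiplicativity under disjoint unions. If $S=S'\sqcup S''$ with $a_i = a_i'+a_i''$ and $K=K'+K''$, the moduli of $(1^r)$ Higgs pairs on $S$ splits as a product, and Tanaka--Thomas's localization integrals factor, so $\Phi$ is multiplicative in $(\chi,K^2,a_i K,a_i a_j)$; since each of these invariants is additive under disjoint union, $\Phi$ must be an exponential in them. Collecting exponents gives the claimed decomposition into universal series $A^{(r)}(q,y)^\chi$, $B^{(r)}(q,y)^{K^2}$, and $\prod_{i\leq j}C_{ij}^{(r)}(q,y)^{a_i a_j}$ (the $a_i K$ factors can be absorbed by writing $a_i K = a_i\cdot(\sum_j a_j)$ using the delta constraint and Wu's relation $a^2\equiv aK\bmod 2$, or, alternatively, incorporated into redefined $C_{ij}^{(r)}$). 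The $q^{-r/24}$ normalization on $B^{(r)}$ comes directly from matching the overall shift $q^{-\chi/(2r)+rK^2/24}$ built into the definition of $\sfZ_{S,H,r,c_1}$.

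The hard part will be the first step: making the Gholampour--Thomas virtual cycle on the nested Hilbert scheme genuinely computable on the $(1^r)$-component, correctly handling the equivariant obstruction theory (including the twist by $K_S$ in the Higgs field and the $y$-refined insertions of Maulik--Thomas), and executing the reduction of the linear-system integrals to $\SW(a_i)$ in the refined setting. Once this geometric input is in place, the universality and exponentiation arguments are essentially formal, running in close parallel to the analogous results on the instanton branch (Proposition \ref{univ}).
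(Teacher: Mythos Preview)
The paper does not contain its own proof of this theorem: it is stated with attribution to Laarakker's work in preparation \cite{Laa}, and only the surrounding paragraph sketches the strategy (Gholampour--Thomas nested Hilbert scheme description, then reduction to Seiberg--Witten invariants and universal integrals on $\prod_i S^{[n_i]}$, determined on $\PP^2$ and $\PP^1\times\PP^1$ as in Section~\ref{chiysec}). Your proposal is a reasonable expansion of precisely this sketch, and the three-step structure (geometric input from \cite{GT1,GT2}, EGL-type universality as in Proposition~\ref{univ}, multiplicativity under disjoint union to obtain the exponential shape) is the intended one.

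One point that needs correction: your mechanism for eliminating the $a_iK$ exponents is not right. The delta constraint relates $c_1$ to $\sum_i i\,a_i$, not $K$ to $\sum_j a_j$, and Wu's formula $a^2\equiv aK\bmod 2$ is only a mod-2 statement, so neither suffices to absorb $a_iK$ into the $a_ia_j$ factors. The correct argument is that the sum only sees classes with $\SW(a_i)\neq 0$, and for Seiberg--Witten basic classes on a surface with $p_g>0$ one has the exact identity $a_iK=a_i^2$ from \eqref{SWeqn}; this lets you replace every $a_iK$ exponent by $a_i^2$ and absorb it into $C_{ii}^{(r)}$. With that fix, your outline matches what the paper describes as Laarakker's argument.
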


Similar to Section \ref{chiysec}, Laarakker shows that these universal functions are determined on $\PP^2$ and $\PP^1 \times \PP^1$. Using torus localization, he calculates these universal functions up to some order. Normalizing such that the RHS is a formal power series in $q$ starting with constant coefficient 1, he obtains \cite{Laa}:
\begin{align*}
-(y^{\frac{1}{2}} + y^{-\frac{1}{2}}) q A^{(2)} &=  \frac{(y-y^{-1}) q}{\phi_{-2,1}(q^2,y^2)^{\frac{1}{2}} \Delta(q^2)^{\frac{1}{2}}}   \mod q^7 \\
q^{-\frac{1}{12}} B^{(2)} &=\frac{q^{-\frac{1}{12}} \eta(q)^2}{ \theta_3(q,y)} \mod q^7 \\
(y^{\frac{1}{2}} + y^{-\frac{1}{2}}) q^{\frac{1}{4}} C^{(2)}_{11} &=\frac{(y^{\frac{1}{2}} + y^{-\frac{1}{2}}) q^{\frac{1}{4}} \theta_3(q,y)}{\theta_2(q,y)} \mod q^7 \\
(y+1+y^{-1}) q^{\frac{3}{2}} A^{(3)} &= \frac{(y^{\frac{3}{2}} - y^{-\frac{3}{2}}) q^{\frac{3}{2}}}{\phi_{-2,1}(q^3,y^3)^{\frac{1}{2}} \Delta(q^3)^{\frac{1}{2}}}  \mod q^6 \\
q^{-\frac{1}{8}} B^{(3)} &= \frac{q^{-\frac{1}{8}} \eta(q)^3 W_{-}(q^{\frac{1}{2}},y)}{\Theta_{A_2,(1,0)}(q^{\frac{1}{2}},y)}   \mod q^6 \\
(y+1+y^{-1}) q^{\frac{1}{3}} C^{(3)}_{11} &= \frac{(y+1+y^{-1}) q^{\frac{1}{3}}}{W_{-}(q^{\frac{1}{2}},y)} \mod q^6\\
(y+1+y^{-1}) q^{\frac{1}{3}} C^{(3)}_{22} &=  \frac{(y+1+y^{-1}) q^{\frac{1}{3}}}{W_{-}(q^{\frac{1}{2}},y)} \mod q^6 \\
\frac{y+1+y^{-1}}{y+2+y^{-1}} q^{\frac{1}{3}} C^{(3)}_{12} &= \frac{y+1+y^{-1}}{y+2+y^{-1}} q^{\frac{1}{3}} W_{+}(q^{\frac{1}{2}},y) W_{-}(q^{\frac{1}{2}},y) \mod q^6.
\end{align*}
This precisely recovers Remark \ref{rank2conj} and Conjecture \ref{conj2} up to the given orders.

Besides providing evidence for our conjectures, Laarakker's calculations suggest the following 
\begin{equation} \label{1^rconj}
\sfZ_{S,H,r,c_1}^{(1^r)}(q,y) \stackrel{?}{=} \sfZ_{S,H,r,c_1}^{\mono}(q,y).
\end{equation}
For rank 2 this is obvious and for rank 3 it implies that Higgs pairs with ranks $(1,2)$ and $(2,1)$ do not contribute. Indeed, for low prime rank (such as $r=3$) or $S = K3$ and any prime rank, Thomas establishes \eqref{1^rconj} using an interesting cosection argument \cite{Tho}.

\section{Modularity} \label{modsec}

In this section, we give evidence for Conjecture \ref{conj3}. We show, among other things, that that our conjectural formulae for $\sfZ_{S,H,2,c_1}(q,y)$ and $\sfZ_{S,H,3,c_1}(q,y)$ (Conjecture \ref{conj2} and Remark \ref{rank2conj}) satisfy the $y$-refined modularity transformation of Conjecture \ref{conj3}. This involves a delicate interplay between quite diverse mathematical objects:
\begin{itemize}
\item properties of Seiberg-Witten invariants,
\item lattice theory of $(H^2(S,\Z),\cup)$,
\item Gauss sums and Dedekind sums,
\item transformations of theta functions.
\end{itemize}

\subsection{Seiberg-Witten invariants}

Let $S$ be a smooth projective surface satisfying $p_g>0$ and $b_1=0$. Then any Seiberg-Witten basic class $a \in H^2(S,\Z)$ satisfies (\cite[Sect.~6.3]{Moc} or \cite{Mor})
\begin{align} \label{SWeqn}
a K = a^2, \qquad \SW(K - a) = (-1)^{\chi} \, \SW(a).
\end{align}

\subsection{Lattice sums and Gauss sums}

Let $S$ be a smooth projective surface satisfying $H_1(S,\Z) = 0$ and $p_g>0$. Then $H^2(S,\Z)$ is torsion free and we consider the unimodular lattice $(H^2(S,\Z),\cup)$. For any prime $p$, we have
$$
H^2(S,\Z) / pH^2(S,\Z) \cong H^2(S,\Z) \otimes \Z_p \cong H^2(S,\Z_p) 
$$
with its induced pairing. 
We denote the Betti numbers of $S$ by $b_i$ and its signature by $\sigma$. In particular, $b_2 = b_2^+ + b_2^-$ and $\sigma = b_2^+ - b_2^-$. Define
$$
\delta_{a,b}^{(p)} := \left\{\begin{array}{cc} 1 & \textrm{if \ } a-b \in pH^2(S,\Z)  \\ 0 & \textrm{otherwise}.  \end{array}\right.
$$
We usually write $\delta_{a,b} =\delta_{a,b}^{(p)}$, when $p$ is fixed. The following results are due to Vafa-Witten and Labastida-Lozano \cite{VW, LL}. 
\begin{proposition}[Vafa-Witten] \label{VWlattice}
\begin{align*}
\sum_{[x] \in H^2(S,\Z_2)} (-1)^{c_1x}&= 2^{b_2} \delta_{c_1,0}, \\
\sum_{[x] \in H^2(S,\Z_2)} (-1)^{c_1x} i^{x^2}&= 2^{\frac{b_2}{2}} i^{\frac{\sigma}{2} - c_1^2}.
\end{align*}
\end{proposition}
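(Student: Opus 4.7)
For the first identity, I would apply character orthogonality on the finite abelian group $G := H^2(S,\Z)/2H^2(S,\Z) \cong H^2(S,\Z_2)$. Since $H_1(S,\Z)=0$, $H^2(S,\Z)$ is torsion-free and the intersection form makes $(H^2(S,\Z),\cup)$ a unimodular lattice, so its reduction mod $2$ is a non-degenerate $\FF_2$-bilinear form on $G$. The character $x \mapsto (-1)^{c_1 x}$ of $G$ is trivial iff $c_1 x \in 2\Z$ for every $x \in H^2(S,\Z)$, which by unimodularity is equivalent to $c_1 \in 2H^2(S,\Z)$, i.e.\ $\delta_{c_1,0} = 1$. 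Character orthogonality on $G$ then yields $\sum_{[x]}(-1)^{c_1 x} = |G| \, \delta_{c_1,0} = 2^{b_2} \delta_{c_1,0}$.

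For the second identity, I would first note that $i^{x^2}$ descends to a function on $G$: if $x' = x + 2z$ then $x'^2 - x^2 = 4(xz + z^2) \equiv 0 \pmod 4$. Completing the square via $(-1)^{c_1 x} = i^{2 c_1 x}$,
$$
(-1)^{c_1 x}\, i^{x^2} \;=\; i^{(x+c_1)^2 - c_1^2} \;=\; i^{-c_1^2}\, i^{(x+c_1)^2},
$$
and then translating $y = x+c_1$ (a bijection of $G$) reduces the claim to the Gauss-sum evaluation
$$
\sum_{[y] \in G} i^{y^2} \;=\; 2^{b_2/2}\, i^{\sigma/2}. \qquad (*)
$$

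To establish $(*)$, the cleanest route is Milgram's formula applied to the quadratic refinement $q: G \to \Z/4\Z$, $q(y) := y^2 \bmod 4$: the relation $q(y_1+y_2) \equiv q(y_1)+q(y_2) + 2(y_1 y_2) \pmod 4$ identifies $q$ as a non-degenerate quadratic form refining the $\FF_2$-bilinear form on $G$, and Milgram gives $\sum_{[y]} e^{2\pi i q(y)/4} = |G|^{1/2} e^{2\pi i \mu/8}$ for a Brown-Arf invariant $\mu \in \Z/8\Z$. The identification $\mu \equiv \sigma \pmod 8$ is van der Blij's theorem. A more hands-on alternative is to stably decompose $(H^2(S,\Z),\cup)$ into the standard unimodular blocks $\langle \pm 1\rangle$, the hyperbolic plane $H$, and $\pm E_8$, on each of which $(*)$ is verified directly: $\sum_{a\in \Z/2} i^{\pm a^2} = 1 \pm i = \sqrt{2}\, i^{\pm 1/2}$; $\sum_{a,b} i^{2ab} = 2$ for $H$; and $\sum i^{x^2} = 16 = 2^{8/2}\, i^{\pm 4}$ for $\pm E_8$, using that the $\FF_2$-quadratic form $x^2/2 \bmod 2$ on $E_8/2E_8$ has vanishing Arf invariant.

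The main obstacle is the identification $\mu \equiv \sigma \pmod 8$ (van der Blij's theorem), which is the only step beyond elementary Fourier analysis and quadratic-form manipulations; once granted, combining the prefactor $i^{-c_1^2}$ with $(*)$ produces the asserted $2^{b_2/2} i^{\sigma/2 - c_1^2}$.
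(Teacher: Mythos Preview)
Your argument is correct and follows the same outline as the paper: the first identity by unimodularity/character orthogonality, the second by completing the square (translating $x \mapsto x+c_1$) to reduce to a pure Gauss sum over $H^2(S,\Z)/2H^2(S,\Z)$, and then exploiting multiplicativity under orthogonal sum.

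The difference lies in how the Gauss sum $(\ast)$ is evaluated. You invoke Milgram's formula together with van der Blij's identification of the Brown--Arf invariant with $\sigma \bmod 8$, or alternatively a block decomposition into $\langle\pm 1\rangle$, $H$, and $\pm E_8$. The paper instead uses a single trick that avoids both the named theorem and the even blocks: since $\phi(L_1\oplus L_2)=\phi(L_1)\phi(L_2)$ and $\phi(I_\pm)\neq 0$, one may first add a copy of $I_+$ or $I_-$ to $H^2(S,\Z)$ to make it odd and indefinite, after which it decomposes \emph{entirely} as $b_2^+\,I_+\oplus b_2^-\,I_-$ by Serre's classification. Then $(\ast)$ follows from the single rank-one computation $\phi(I_+)=(1+i)/\sqrt{2}$ and its conjugate. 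This is more elementary than your Milgram/van der Blij route and cleaner than your block-by-block alternative, since it sidesteps the $E_8$ Arf computation altogether; on the other hand, your Milgram argument is the conceptually ``right'' one and generalizes more readily.
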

\begin{proposition}[Labastida-Lozano] \label{LLlattice}
For $r>2$ prime and $m =1, \ldots, r-1$, we have
\begin{align*}
\sum_{[x] \in H^2(S,\Z_r)} e^{\frac{2 \pi i}{r} (c_1x)} &= r^{b_2} \delta_{c_1,0}, \\
\sum_{[x] \in H^2(S,\Z_r)} e^{\frac{2 \pi i}{r} (c_1x)} e^{\frac{\pi i (r-1)}{r} m x^2} &= \epsilon(m)^{b_2} r^{\frac{b_2}{2}} e^{-\frac{\pi i}{8} (r-1)^2 \sigma} e^{\frac{\pi i (r-1)}{r} n c_1^2},
\end{align*}
where $mn \equiv -1 \mod r$,
$$
\epsilon(m) := \left\{ \begin{array}{cc} \Big( \frac{m/2}{r} \Big) & \textrm{if} \ m \ \textrm{is even} \\ \Big( \frac{(m+r)/2}{r} \Big) & \textrm{if} \ m \ \textrm{is odd} \end{array} \right.
$$
and $\big( \frac{a}{r} \big)$ denotes the Legendre symbol.
\end{proposition}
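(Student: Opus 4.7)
The first identity is pure character orthogonality. Cup product with $c_1$ descends to a homomorphism $H^2(S,\Z_r)\to \Z_r$, $x\mapsto c_1 x \bmod r$, and hence defines an additive character $\chi_{c_1}\colon H^2(S,\Z_r)\to \C^*$. Because $(H^2(S,\Z),\cup)$ is unimodular, this character is trivial precisely when $c_1\in rH^2(S,\Z)$. Summing a nontrivial character over a finite abelian group gives $0$, and summing the trivial character gives $|H^2(S,\Z_r)|=r^{b_2}$, which is exactly the asserted formula.

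For the second identity the plan is to diagonalize the cup product on $H^2(S,\F_r)$ and reduce to a product of one-variable Gauss sums. Since $r$ is odd and the form is non-degenerate mod $r$ (by unimodularity over $\Z$), over $\F_r$ there is an orthogonal basis $e_1,\dots,e_{b_2}$ with $e_i\cup e_i=\alpha_i\in\F_r^\times$. Writing $x=\sum x_i e_i$ and $c_1=\sum c_{1,i}e_i$ (in $\F_r$), and setting $M:=\tfrac{(r-1)m}{2}\in\Z$, the sum factorizes as
\[
\prod_{i=1}^{b_2}\sum_{x_i\in\F_r} e^{\frac{2\pi i}{r}\bigl(c_{1,i}x_i+M\alpha_i x_i^2\bigr)}.
\]
Since $r$ is odd, $2$ is invertible mod $r$, so each factor can be completed to a square. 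The linear contributions assemble, using unimodularity (so that the dual bilinear form coincides with the original), into the global phase $e^{-\frac{2\pi i}{r}\cdot\frac{c_1^2}{4M}}$. A short computation with the relation $mn\equiv -1\pmod r$ rewrites this as $e^{\frac{\pi i(r-1)}{r}nc_1^2}$, which is the $c_1$-dependent factor in the answer.

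The remaining quadratic Gauss sum in each variable is the classical
\[
\sum_{x\in\F_r} e^{\frac{2\pi i}{r} M\alpha_i x^2} = \Big(\tfrac{M\alpha_i}{r}\Big)\, g(r),\qquad g(r)=\epsilon_r\sqrt r,
\]
with $\epsilon_r=e^{\pi i(r-1)^2/8}$. Taking the product over $i$ gives the $r^{b_2/2}$ factor together with $\epsilon_r^{b_2}=e^{\pi i(r-1)^2 b_2/8}$ and the Legendre symbol $\bigl(\tfrac{M^{b_2}\det Q}{r}\bigr)$. By Milnor's classification, the unimodular form $(H^2(S,\Z),\cup)$ has discriminant $(-1)^{b_2^-}$, so $\bigl(\tfrac{\det Q}{r}\bigr)=\bigl(\tfrac{-1}{r}\bigr)^{b_2^-}$, which combines with $\epsilon_r^{b_2}$ to produce $e^{-\pi i(r-1)^2\sigma/8}$ via $\epsilon_r^2=(-1)^{(r-1)/2}=\bigl(\tfrac{-1}{r}\bigr)$ and $b_2=b_2^++b_2^-$, $\sigma=b_2^+-b_2^-$. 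Finally, the factor $\bigl(\tfrac{M^{b_2}}{r}\bigr)$ is reorganized as $\epsilon(m)^{b_2}$ using $M=\tfrac{(r-1)m}{2}\equiv -\tfrac{m}{2}\pmod r$, and the case split in the definition of $\epsilon(m)$ is precisely the device needed to make sense of "$m/2$" mod $r$ when $m$ is odd by replacing it with $(m+r)/2\in\Z$.

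The main obstacle is bookkeeping: matching signs between the sign of the discriminant, the power $\epsilon_r^{b_2}$, and the factor $\bigl(\tfrac{M^{b_2}}{r}\bigr)$ so that $\sigma$ (and not just $b_2$) appears in the exponent; and carefully handling the non-integral ``$1/2$'' in $\epsilon(m)$ via quadratic reciprocity in $\F_r$. Everything else is routine.
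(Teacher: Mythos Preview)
Your argument is correct in outline but takes a different route from the paper. The paper works over $\Z$: it defines $\phi(L)=r^{-\rk L/2}\sum_{[x]\in L/rL}e^{\frac{\pi i(r-1)}{r}mx^2}$, uses multiplicativity under orthogonal direct sum, and then invokes the classification of indefinite unimodular lattices to write $H^2(S,\Z)$ (after stabilizing by one copy of $I_\pm$ if necessary) as $b_2^+I_+\oplus b_2^-I_-$. This reduces everything to the single computation of $\phi(I_+)$, with $\phi(I_-)=\overline{\phi(I_+)}$ by complex conjugation; the signature $\sigma$ then appears automatically because the positive and negative pieces contribute $\overline{\epsilon_r}$ and $\epsilon_r$ respectively. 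Your approach instead diagonalizes only over $\F_r$ and recovers $\sigma$ indirectly through the discriminant $(-1)^{b_2^-}$ of the integral form. Both are valid; the paper's route is slightly cleaner because it never has to disentangle $\sigma$ from $b_2$, whereas yours has to cancel several powers of $\bigl(\tfrac{-1}{r}\bigr)$ against each other.

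On that point, there is a small slip in your bookkeeping. You assert $\bigl(\tfrac{M}{r}\bigr)=\epsilon(m)$, but since $M\equiv -m/2\pmod r$ while $\epsilon(m)=\bigl(\tfrac{m/2}{r}\bigr)$, actually $\bigl(\tfrac{M}{r}\bigr)=\bigl(\tfrac{-1}{r}\bigr)\epsilon(m)$. Separately, your claim that $\epsilon_r^{b_2}\bigl(\tfrac{-1}{r}\bigr)^{b_2^-}=\epsilon_r^{-\sigma}$ is off by $\epsilon_r^{2b_2}=\bigl(\tfrac{-1}{r}\bigr)^{b_2}$. These two errors are each a factor of $\bigl(\tfrac{-1}{r}\bigr)^{b_2}$ and cancel, so your final formula is right; but the intermediate identities as stated are not, and you should insert the missing $\bigl(\tfrac{-1}{r}\bigr)$ factors explicitly (the cancellation then uses $\epsilon_r^4=1$).
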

Since $(H^2(S,\Z), \cup)$ is unimodular of rank $b_2$, the first equation in both propositions is clear. As described by Vafa-Witten and Labastida-Lozano, the other two identities can proved using basic facts from lattice theory and Gauss sums. For the sake of completeness, we include the argument. 

For any $r \geq 2$ prime, $m=1, \ldots, r-1$, and unimodular lattice $L$, define
\begin{align*}
\phi(L) &:= r^{-\rk(L)/2} \sum_{[x] \in L / r L} e^{\frac{\pi i (r-1)}{r} m x^2}.
\end{align*}
Then
\begin{align} \label{product}
\phi(L_1 \oplus L_2) &= \phi(L_1) \phi(L_2), 
\end{align}
for all unimodular lattices $L_1$, $L_2$. Let $I_{\pm}$ denote the rank 1 lattice $\Z$ with quadratic form $\pm x^2$. Any odd indefinite unimodular lattice is of the form $m I_+ \oplus nI_-$ for some $m,n \geq 1$ \cite[Ch.~V.2.2]{Ser}. We can make $H^2(S,\Z)$ odd and indefinite after adding $I_{+}$ or $I_{-}$, so \eqref{product} implies
\begin{equation} \label{prodIpm}
\phi(H^2(S,\Z)) = \phi(I_+)^{b_2^+} \phi(I_-)^{b_2^-} = \phi(I_+)^{b_2^+} \overline{\phi(I_+)}^{b_2^-}.
\end{equation}
This argument requires $\phi(I_{\pm}) \neq 0$, which we now show by calculation. 

For $r=2$, we have 
$$
\phi(I_+) = \frac{1+i}{\sqrt{2}}
$$
and the second equation of Proposition \ref{VWlattice}, for $c_1 = 0$, follows from \eqref{prodIpm}. The formula for $c_1 \neq 0$ follows by replacing the sum over $x$ by a sum over $x+c_1$.

For $r \geq 3$ prime and $m=1, \ldots, r-1$, we want to calculate
$$
\sqrt{r} \phi(I_+) = \Bigg\{ \begin{array}{cc} \overline{G(m/2,r)} & \textrm{if} \ m \ \textrm{even} \\ \overline{G((m+r)/2,r)} & \textrm{if} \ m \ \textrm{odd}, \end{array} 
$$
where
$$
G(m,r) := \sum_{x=0}^{r-1} e^{\frac{2 \pi i}{r} m x^2}.
$$ 
This is a familiar object from number theory known as a Gauss sum. The second equation of Proposition \ref{LLlattice} for $c_1=0$ follows from \cite[Ch.~4.3]{Lan}, which states (after some rewriting)
$$
G(m,r) = \Big( \frac{m}{r} \Big) \sqrt{r} e^{\frac{\pi i}{8} (r-1)^2},  
$$
for any odd number $r > 0$ and $m \in \Z$ such that $\gcd(r,m) = 1$. The formula for $c_1 \neq 0$ follows by replacing the sum over $x$ by a sum over $x+n c_1$, where $mn \equiv -1 \mod r$.

\subsection{Dedekind sums} 

We often encounter the expression
$$
\phi_{-2,1}(q,y)^{\frac{1}{2}} \Delta(q)^{\frac{1}{2}} := (y^{\frac{1}{2}} - y^{-\frac{1}{2}}) \, q^{\frac{1}{2}} \, \prod_{n=1}^{\infty} (1-q^n)^{10} (1-q^n y) (1-q^n y^{-1}),
$$
where $\phi_{-2,1}(q,y) \Delta(q)$ is the unique Jacobi cusp form of weight 10 and index 1 \cite{EZ}. It transforms as follows
\begin{equation} \label{transepsilon}
\phi_{-2,1}\Big(\frac{a \tau + b}{c \tau + d},\frac{z}{c \tau + d}\Big)^{\frac{1}{2}} \Delta\Big(\frac{a \tau + b}{c \tau + d}\Big)^{\frac{1}{2}} = \epsilon(a,b,c,d) (c \tau + d)^{5} e^{\frac{\pi i c z^2}{c \tau + d}} \phi_{-2,1}(\tau,z)^{\frac{1}{2}} \Delta(\tau)^{\frac{1}{2}}
\end{equation}
for all 
$$
\Big( \begin{array}{cc} a & b \\ c & d \end{array}\Big) \in \mathrm{SL}(2,\Z),
$$
where $\epsilon(a,b,c,d)^2=1$. Clearly $\epsilon(a,b,c,d) = e^{\pi i b}$ when $c=0$ and $a=d=1$. The following lemma determines the signs $\epsilon(a,b,c,d)$ for $c>0$.
\begin{lemma} \label{Dedekind} For $c>0$ we have
$$
\epsilon(a,b,c,d) = - e^{\pi i \Big( \frac{a+d}{c} + 12 s(-d,c) \Big)},
$$
where $s(h,k)$ denotes the Dedekind sum
$$
s(h,k) = \sum_{r=1}^{k-1} \frac{r}{k} \Big(\frac{hr}{k} - \Big\lfloor \frac{hr}{k} \Big\rfloor - \frac{1}{2} \Big).
$$
\end{lemma}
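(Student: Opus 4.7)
The strategy is to express $\phi_{-2,1}(q,y)^{1/2}\Delta(q)^{1/2}$ in terms of the Dedekind eta function and the Jacobi theta function $\theta_1$, for which $\mathrm{SL}(2,\Z)$ transformation laws are classical. Applying Jacobi's triple product
$$\theta_1(z|\tau) = -i\, q^{1/8}(y^{1/2}-y^{-1/2})\prod_{n\geq 1}(1-q^n)(1-q^n y)(1-q^n y^{-1}),$$
and comparing with the infinite product written just before the lemma, one reads off
$$\phi_{-2,1}(q,y)^{1/2}\Delta(q)^{1/2} = i\,\eta(\tau)^9\,\theta_1(z|\tau).$$
The proof thus reduces to combining the transformation laws of $\eta^9$ and $\theta_1$ under $\gamma = \bigl(\begin{smallmatrix}a&b\\c&d\end{smallmatrix}\bigr) \in \mathrm{SL}(2,\Z)$ with $c > 0$.

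For the eta factor I would invoke Rademacher's classical transformation
$$\eta(\gamma\tau) = v_\eta(\gamma)\,(c\tau+d)^{1/2}\eta(\tau),$$
where $v_\eta(\gamma)$ is a $24$th root of unity explicit in terms of $s(d,c)$, and a short calculation (or direct reference) yields $v_\eta(\gamma)^{12} = -\exp(\pi i((a+d)/c - 12 s(d,c)))$. For $\theta_1$, general principles for Jacobi forms of weight $1/2$ and index $1/2$ force a transformation of the shape
$$\theta_1\bigl(z/(c\tau+d)\,\bigl|\,\gamma\tau\bigr) = A(\gamma)(c\tau+d)^{1/2}\exp(\pi i cz^2/(c\tau+d))\,\theta_1(z|\tau)$$
for some constant $A(\gamma)$. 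Differentiating in $z$ at $z=0$ and invoking the identity $\theta_1'(0|\tau) = 2\pi\eta(\tau)^3$ pins down $A(\gamma) = v_\eta(\gamma)^3$.

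Multiplying the two transformations, the $v_\eta$-factors compound to $v_\eta(\gamma)^{9+3}=v_\eta(\gamma)^{12}$ and the powers of $(c\tau+d)$ add to $5$, producing
$$\phi_{-2,1}^{1/2}\Delta^{1/2}\Big|_{(\gamma\tau,\,z/(c\tau+d))} = v_\eta(\gamma)^{12}(c\tau+d)^5\exp(\pi i cz^2/(c\tau+d))\,\phi_{-2,1}^{1/2}\Delta^{1/2}(\tau,z),$$
so $\epsilon(a,b,c,d) = v_\eta(\gamma)^{12}$. The identity $s(-d,c) = -s(d,c)$ (oddness of the Dedekind sum in its first argument, immediate from $\{-dr/c\} = 1 - \{dr/c\}$ for $\gcd(d,c)=1$) then rewrites this as $-\exp(\pi i((a+d)/c + 12 s(-d,c)))$, matching the claim.

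The main obstacle I anticipate is branch-tracking of the square roots in the eta transformation. The usual statement is often phrased with a $(-i(c\tau+d))^{1/2}$ rather than $(c\tau+d)^{1/2}$, introducing a spurious $e^{-i\pi/4}$ per copy of $\eta$; twelve such factors multiply to $(-i)^6 = -1$, and it is precisely this $-1$ that produces the leading minus sign in the claimed formula. Once a branch convention is fixed consistently throughout the computation, the remaining identifications amount to routine bookkeeping.
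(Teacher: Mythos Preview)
Your proof is correct and follows essentially the same approach as the paper: both reduce the determination of $\epsilon(a,b,c,d)$ to the known transformation law of $\eta(\tau)^{12}$ by passing to $z=0$. The paper does this in one step---dividing both sides of the transformation equation by $(y^{1/2}-y^{-1/2})$ and sending $z\to 0$ yields the $\eta^{12}$ transformation directly---whereas you first factor $\phi_{-2,1}^{1/2}\Delta^{1/2}=i\,\eta^9\theta_1$ and then pin down the $\theta_1$ multiplier via $\theta_1'(0|\tau)=2\pi\eta^3$, arriving at the same $v_\eta^{12}$.
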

\begin{proof}
Using 
$$
\frac{e^{\pi i z} - e^{-\pi i z}}{e^{\frac{\pi i z}{c \tau + d}} - e^{-\frac{\pi i z}{c \tau + d}}} = c \tau + d + O(z),
$$
and setting $z=0$, \eqref{transepsilon} becomes
\begin{align*}
\eta\Big( \frac{a \tau + b}{c \tau + d} \Big)^{12} &= \epsilon(a,b,c,d) (c \tau + d)^{6} \eta(\tau)^{12} \\
&= - e^{\pi i \Big( \frac{a+d}{c} + 12 s(-d,c) \Big)} \eta(\tau)^{12},
\end{align*}
where the second equality follows from the transformation laws of the Dedekind eta function \cite[Thm.~3.4]{Apo}.
\end{proof}

\subsection{Theta functions}

In this section, we review some facts about theta functions (e.g.~see \cite[Sect.~3.3]{GZ}).
Let $\Gamma$ be a positive definite lattice of rank $r$.  We write 
$V=\Gamma\otimes \C$ and $V_\Q=\Gamma\otimes\Q$.  
For vectors $v,w\in V$, let $\langle v,w\rangle$
be the bilinear form.
Denote by $M_\Gamma$ the set of meromorphic functions $f:{\mathfrak H}\times V\to \C$. 
For 
$(\lambda,\mu)\in V \times V$, 
let
\begin{align*}
f|(\lambda,\mu)(\tau,x)&=q^{\frac{1}{2} \langle \lambda,\lambda\rangle} \exp(2\pi i \langle \lambda,x+\mu/2\rangle) \,  f(\tau,x+\lambda\tau+\mu).
\end{align*}
We also write $$f|_{r/2}S(\tau,x):= \Big( \frac{\tau}{i} \Big)^{-\frac{r}{2}} \, e^{-\pi i \langle x,x\rangle/\tau}f(-1/\tau,x/\tau).$$
It is easy to see that 
\begin{align}\label{Sv} f|(\lambda,\mu)|_{r/2}S(\tau,x)= f|_{r/2}S |(\mu,-\lambda)(\tau,x).
\end{align}
The theta function for $\Gamma$ is 
$$\Theta_\Gamma(\tau,x) := \sum_{v\in \Gamma} q^{\frac{1}{2}\langle v, v\rangle} e^{2 \pi i \langle v ,x\rangle}\in M_{\Gamma}.$$ 
If $\Gamma$ has rank $r$ it is well-known that
\begin{align} \label{STh}\sqrt{N} \,
\Theta_{\Gamma}|_{r/2}S(\tau,x)
=\Theta_{\Gamma^\vee}(\tau,x)=\sum_{v\in P} \Theta_{\Gamma}|(v,0)(\tau,x).
\end{align}
Here is $N$ the determinant of the matrix of the bilinear form on $\Gamma$ and
$$\Gamma^\vee:=\big\{v\in \Gamma\otimes \Q\bigm| \langle v,w\rangle\in \Z,  \forall \, w\in \Gamma \big\}$$
is the dual lattice to $\Gamma$. For the second equality of \eqref{STh}, we assume that $\Gamma$ is integral and $P$ is a system of representatives of $\Gamma^\vee/\Gamma$.

The $A_1$ lattice consists of $\Z$ with bilinear form with ``matrix'' $(2)$. 
Then the theta functions appearing in the rank 2 conjectures of Section \ref{intro} can be expressed in terms of $\Theta_{A_1}(\tau,x)$ and $\Theta_{A_1^\vee}(\tau,x)$ as follows 
\begin{align*}
\theta_3(q,y) = \Theta_{A_1}(\tau,z/2), \quad \theta_3(q,y) + \theta_2(q,y) = \Theta_{A_1^\vee}(\tau,z/2).
\end{align*}

Next, consider the $A_2$ lattice and let $A$ denote the matrix corresponding to its bilinear form (see \eqref{matrixA}). We also consider the dual lattice $A_2^\vee$. Taking 
\begin{equation} \label{basis}
(2/3,1/3),(1/3,2/3)
\end{equation}
as its basis, the matrix corresponding to its bilinear form is $A^{-1}$ (see \eqref{matrixAinv}). Then the theta functions appearing in the rank 3 conjectures of Section \ref{intro} can be expressed in terms of $\Theta_{A_2}(\tau,x)$ and $\Theta_{A_2^\vee}(\tau,x)$ as follows
\begin{align*}
\Theta_{A_2,(0,0)}(q^{\frac{1}{2}},y)&=\Theta_{A_2}(\tau,(z,z)),\quad \Theta_{A_2,(1,0)}(q^{\frac{1}{2}},y)=
\Theta_{A_2}|(1/3,-1/3),0)(\tau,(z,z)), \\
\Theta_{A^\vee_2,(0,0)}(q^{\frac{1}{6}},y)&=\Theta_{A^\vee_2}(\tau,(z,z)),\quad \Theta_{A^\vee_2,(0,1)}(q^{\frac{1}{6}},y)=\Theta_{A^\vee_2}|(0,(1,-1))(\tau,(z,z)).
\end{align*}

\subsection{Rank 1 and specialization} 

\begin{proposition}
Conjecture \ref{conj3} holds for $r=1$.
\end{proposition}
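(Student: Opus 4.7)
For $r=1$ the assumption $H_1(S,\Z)=0$ implies $\Pic^0(S)=0$, so $M_S^H(1,c_1,c_2)\cong S^{[c_2]}$ is smooth of dimension $2c_2=\vd$, the monopole branch is empty, and the $c_1$-dependence drops out since $H^2(S,\Z)/H^2(S,\Z)=0$. Hence
$$
\sfZ_{S,H,1,c_1}(\tau,z)\;=\;q^{-\chi/2+K^2/24}\sum_{n\geq 0}y^{-n}\chi_{-y}(S^{[n]})\,q^n.
$$
The plan is to identify this with an explicit product of $\phi_{-2,1}\Delta$ and $\eta$, and then read \eqref{modref} off from their classical transformation laws.

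First, I would apply G\"ottsche's product formula for the generating function of $\chi_y$-genera of $S^{[n]}$ (valid for $b_1=0$). Substituting $q\mapsto q/y$ and using the Hirzebruch-Riemann-Roch identity $\chi(\Omega_S^1)=K^2-10\chi$ gives
$$
\sum_{n\geq 0}y^{-n}\chi_{-y}(S^{[n]})\,q^n\;=\;\prod_{n\geq 1}\frac{1}{(1-q^n)^{10\chi-K^2}(1-q^n y)^{\chi}(1-q^n y^{-1})^{\chi}}.
$$
Comparing this with the product expansions
$$
\phi_{-2,1}(\tau,z)^{1/2}\Delta(\tau)^{1/2}=(y^{1/2}-y^{-1/2})\,q^{1/2}\prod_{n\geq 1}(1-q^n)^{10}(1-q^n y)(1-q^n y^{-1})
$$
and $\eta(\tau)=q^{1/24}\prod_n(1-q^n)$, and absorbing the prefactor $q^{-\chi/2+K^2/24}$, yields the closed form
$$
\sfZ_{S,H,1,c_1}(\tau,z)\;=\;(y^{1/2}-y^{-1/2})^{\chi}\,\bigl(\phi_{-2,1}(\tau,z)^{1/2}\Delta(\tau)^{1/2}\bigr)^{-\chi}\eta(\tau)^{K^2}.
$$

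The $T$- and $S$-transformations now reduce to classical ones. Under $\tau\mapsto\tau+1$, only the $q^{1/2}$ factor in $\Delta^{1/2}$ picks up a sign, so $(\phi_{-2,1}^{1/2}\Delta^{1/2})^{-\chi}$ contributes $(-1)^{\chi}$ and $\eta^{K^2}$ contributes $e^{\pi i K^2/12}$; since $(r-1)c_1^2/r=0$ for $r=1$, this matches the $T$-transformation in \eqref{modref}. Under $(\tau,z)\mapsto(-1/\tau,z/\tau)$, Lemma \ref{Dedekind} applied to $(a,b,c,d)=(0,-1,1,0)$ gives $\epsilon=-1$ (using $s(0,1)=0$), so
$(\phi_{-2,1}\Delta)^{1/2}(-1/\tau,z/\tau)=-\tau^{5}e^{\pi i z^2/\tau}(\phi_{-2,1}\Delta)^{1/2}(\tau,z)$;
raising to the $(-\chi)$-th power yields $(-1)^{\chi}\tau^{-5\chi}e^{-\pi i\chi z^2/\tau}$. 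Combined with $\eta(-1/\tau)^{K^2}=\tau^{K^2/2}i^{-K^2/2}\eta(\tau)^{K^2}$, this produces the $S$-transformation in \eqref{modref}: the sum over $[a]\in H^2(S,\Z)/H^2(S,\Z)$ is a single term, the factor $r^{1-e/2}=1$, and the $(r-1)^2K^2/4$ contribution to the Mumford exponent vanishes, leaving only the $-r\chi/2=-\chi/2$ piece.

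The main technical obstacle is the sign-tracking of the square roots $\phi_{-2,1}^{1/2}\Delta^{1/2}$ under the two generators of $\mathrm{SL}(2,\Z)$; Lemma \ref{Dedekind} resolves this cleanly, and every phase in \eqref{modref} is then matched on the nose.
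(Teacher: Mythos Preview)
Your proof is correct and follows essentially the same approach as the paper: identify $\sfZ_{S,H,1,c_1}$ with $(y^{1/2}-y^{-1/2})^\chi(\phi_{-2,1}^{1/2}\Delta^{1/2})^{-\chi}\eta^{K^2}$ via the G\"ottsche--Soergel formula, then read off the $T$- and $S$-transformations from the standard ones for $\eta$ and $\phi_{-2,1}^{1/2}\Delta^{1/2}$ (the latter sign coming from Lemma~\ref{Dedekind}). The only cosmetic difference is that you spell out the derivation of the closed form from the product formula and the identity $\chi(\Omega_S^1)=K^2-10\chi$, whereas the paper simply cites \cite{GS}; and your remark about needing $\Pic^0(S)=0$ is slightly overcautious, since the determinant is already fixed in the definition of $M_S^H(1,c_1,c_2)$, so $M\cong S^{[c_2]}$ holds regardless (indeed the paper notes that neither $H_1(S,\Z)=0$ nor $p_g>0$ is actually needed here, only $b_1=0$).
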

\begin{proof}
Let $S$ be a smooth projective surface satisfying $b_1=0$ ($p_g>0$, $H_1(S,\Z) = 0$ are not needed in this proof). When $r=1$, we have
\begin{align*}
\sfZ_{S,H,1,c_1}(q,y) &= q^{-\frac{1}{2} \chi + \frac{1}{24} K^2} \sum_{n=0}^{\infty} \overline{\chi}_{-y}(\Hilb^n(S)) \, q^{n} \\
&= \Bigg( \frac{y^{\frac{1}{2}} - y^{-\frac{1}{2}}}{\phi_{-2,1}(q,y)^{\frac{1}{2}} \Delta(q)^{\frac{1}{2}}} \Bigg)^{\chi} \Bigg( \frac{1}{\eta(q)} \Bigg)^{-K^2}
\end{align*}
by a result of the first named author and W.~Soergel \cite{GS}. \\

\noindent \textbf{Step 1:} For $\tau \mapsto \tau+1$, $z \mapsto z$, the result follows from the following transformations
\begin{align*}
\eta(q) &\mapsto e^{\frac{\pi i}{12}} \eta(q), \\
\phi_{-2,1}(q,y)^{\frac{1}{2}} \Delta(q^{\frac{1}{2}})^{\frac{1}{2}} &\mapsto - \phi_{-2,1}(q,y)^{\frac{1}{2}} \Delta(q^{\frac{1}{2}})^{\frac{1}{2}}.
\end{align*}

\noindent \textbf{Step 2:} For $\tau \mapsto -1/\tau$, $z \mapsto z/\tau$, the result follows from the following transformations
\begin{align*}
\eta(q) &\mapsto \Big( \frac{\tau}{i} \Big)^{\frac{1}{2}} \eta(q), \\
\phi_{-2,1}(q,y)^{\frac{1}{2}} \Delta(q^{\frac{1}{2}})^{\frac{1}{2}} &\mapsto - \tau^5 e^{\frac{\pi i z^2}{\tau}} \phi_{-2,1}(q,y)^{\frac{1}{2}} \Delta(q^{\frac{1}{2}})^{\frac{1}{2}},
\end{align*}
where the minus sign in the second equation comes from Lemma \ref{Dedekind}. 
\end{proof}

\begin{proposition}
Conjecture \ref{conj3} implies transformations \eqref{modunref}.
\end{proposition}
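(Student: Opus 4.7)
The plan is to obtain \eqref{modunref} from Conjecture \ref{conj3} by specializing at $z=0$ (equivalently $y=1$). The foundational observation is that $\chi_{-y}^{\vir}(M)|_{y=1}=e^{\vir}(M)$, so $\VW_S^H(r,c_1,c_2,y)$ restricts to the unrefined Vafa-Witten invariant at $y=1$, giving $\sfZ_{S,H,r,c_1}(\tau,z)|_{z=0}=\sfZ_{S,H,r,c_1}(\tau)$.

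For the first transformation of Conjecture \ref{conj3}, nothing other than $\sfZ_{S,H,r,c_1}$ depends on $z$, so setting $z=0$ recovers the first line of \eqref{modunref} immediately.

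For the $S$-duality transformation, the plan is to first multiply both sides by $(y^{\frac{1}{2}}-y^{-\frac{1}{2}})^\chi=(e^{\pi i z}-e^{-\pi i z})^\chi$ to clear the $\chi$-th power denominators. The LHS then acquires the prefactor
$$
\biggl(\frac{e^{\pi i z}-e^{-\pi i z}}{e^{\pi i z/\tau}-e^{-\pi i z/\tau}}\biggr)^{\!\chi},
$$
which, by the Taylor expansion used in the proof of Lemma \ref{Dedekind}, tends to $\tau^\chi$ as $z\to 0$. The Gaussian factor $e^{\frac{2\pi i z^2}{\tau}(\cdots)}$ on the RHS tends simultaneously to $1$. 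In the limit the expected identity is
$$
\tau^\chi\, \sfZ_{S,H,r,c_1}(-1/\tau)=(-1)^{r\chi}\, r^{1-\frac{e}{2}}\, i^{-\frac{K^2}{2}}\, \tau^{-5\chi+\frac{K^2}{2}}\sum_{[a]} e^{\frac{2\pi i}{r}c_1 a}\, \sfZ_{S,H,r,a}(\tau).
$$

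The remaining step is arithmetic bookkeeping via Noether's formula $12\chi=K^2+e$: after dividing by $\tau^\chi$, the factor $\tau^{-5\chi+K^2/2}$ becomes $\tau^{-6\chi+K^2/2}=\tau^{-e/2}$, while $i^{-6\chi}=(-1)^\chi$ gives $i^{-K^2/2}=(-1)^\chi i^{e/2}$, so that $(-1)^{r\chi} i^{-K^2/2}=(-1)^{(r-1)\chi} i^{e/2}$. Combined, these reassemble into $(-1)^{(r-1)\chi}(\tau/i)^{-e/2}$, matching the $S$-duality line of \eqref{modunref}. No real obstacle is anticipated; the one conceptually interesting point is that the $\tau^\chi$ produced by the $(y^{1/2}-y^{-1/2})^\chi$ limit is precisely what converts the ``refined'' modular weight $-5\chi+K^2/2$ into the ``unrefined'' weight $-e/2$.
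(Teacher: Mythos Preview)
Your proof is correct and follows essentially the same approach as the paper: take the limit $z\to 0$ in \eqref{modref}, use the Taylor expansion of $(e^{\pi i z}-e^{-\pi i z})/(e^{\pi i z/\tau}-e^{-\pi i z/\tau})$ (as in the proof of Lemma \ref{Dedekind}) to produce the factor $\tau^\chi$, and then invoke Noether's formula $12\chi=K^2+e$ to reconcile the constants and the power of $\tau$. You have spelled out the arithmetic more explicitly than the paper does, but the argument is the same.
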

\begin{proof}
Taking the limit $z \rightarrow 0$, \eqref{modref} implies \eqref{modunref}. This follows from
$$
\frac{\Big(y^{\frac{1}{2}} - y^{-\frac{1}{2}}\Big)\Big|_{\frac{z}{\tau}}}{y^{\frac{1}{2}} - y^{-\frac{1}{2}}} = \frac{1}{\tau} + O(z)
$$
and a simple calculation involving Noether's formula 
\begin{equation*}
\chi = \frac{1}{12}(K^2+e). \qedhere
\end{equation*}
\end{proof}

\subsection{$K3$ surfaces}

In \cite{LL} (and \cite{VW} when $c_1=0$), the authors conjecture a formula for $\sfZ_{K3,H,r,c_1}(q)$ for $r$ prime. When $c_1=0$, this formula was proved by Tanaka-Thomas \cite{TT2} (and extended to all integers $r>0$). When $r$ does not divide $Hc_1$, $M_S^H(r,c_1,c_2)$ is deformation equivalent to $S^{[\vd/2]}$ \cite{Yos4} and the formula essentially follows from \cite{GS}. We conjecture a natural $y$-refinement:
\begin{conjecture} \label{K3conj}
For any K3 surface with polarization $H$, first Chern class $c_1$, and $r$ prime, we have
\begin{align*}
\sfZ_{K3,H,r,c_1}^{\inst}(q,y) &= \frac{1}{r} \sum_{m=0}^{r-1} e^{\frac{i \pi (r-1)}{r} m c_1^2} \frac{(y^{\frac{1}{2}} - y^{-\frac{1}{2}})^2}{\phi_{-2,1}\Big(\frac{\tau+m}{r},z\Big) \Delta\Big(\frac{\tau+m}{r}\Big)}, \\
\sfZ_{K3,H,r,c_1}^{\mono}(q,y) &= \frac{(y^{\frac{1}{2}} - y^{-\frac{1}{2}})^2}{\phi_{-2,1}(r\tau,rz) \Delta(r \tau)} \delta_{c_1,0}.
\end{align*}
\end{conjecture}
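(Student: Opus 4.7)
The plan is to split Conjecture~\ref{K3conj} into its two pieces (instanton and monopole) and reduce each to a computation that is either already in the literature or a mild refinement of one. The key structural observation for $K3$ is that $K_S=0$, so the scaling $\C^*$-action on the Higgs field admits nonzero fixed Higgs pairs only through strictly Gieseker semistable sheaves: when $E$ is stable, a traceless endomorphism must be nilpotent, hence zero. Consequently, when no strictly semistable sheaves exist, the entire refined generating function reduces to the instanton branch, and the monopole piece contributes only in the $c_1\in rH^2(K3,\Z)$ regime, which is exactly where the $\delta_{c_1,0}$ in the conjecture is nonzero.

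For the instanton branch, I would first handle $r\nmid Hc_1$. Yoshioka's deformation-equivalence result gives $M_{K3}^H(r,c_1,c_2)\sim_{\mathrm{def}} K3^{[\vd/2]}$, so $M$ is smooth of dimension $\vd$ and $\chi^{\vir}_{-y}(M)=\chi_{-y}(K3^{[\vd/2]})$. The G\"ottsche--Soergel product formula \cite{GS} for the $\chi_y$-genus of Hilbert schemes then identifies the generating function, up to the normalization in \eqref{instantongenfun}, with $q(y^{1/2}-y^{-1/2})^2/(\phi_{-2,1}(\tau,z)\Delta(\tau))$. It remains to verify that the Hecke-type average
\[
\frac{1}{r}\sum_{m=0}^{r-1} e^{\frac{i\pi(r-1)}{r}mc_1^2}\,f\!\left(\frac{\tau+m}{r},z\right)
\]
projects onto precisely the powers of $q^{1/r}$ arising from integral $c_2$: the phase picks out the residue class $k\equiv -(r-1)c_1^2/2\pmod r$, which matches the integrality constraint on $\vd/(2r)$ coming from \eqref{vd}. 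This is a routine Fourier-series manipulation.

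The harder case is $c_1\in rH^2(K3,\Z)$, in particular $c_1=0$, where strictly Gieseker semistable sheaves force one to use the extended definition. Here one starts from Tanaka-Thomas \cite{TT2}, who prove the unrefined formula for $c_1=0$ and \emph{all} $r>0$ by means of a dimensional reduction to the Quot scheme and a direct calculation on the associated nilpotent cone. The plan is to lift their argument to the Maulik-Thomas $y$-refinement \cite{MT,Tho}: the $\C^*$-localized virtual structure sheaf on $N_{K3}^H(r,L,c_2)^{\C^*}$ is compatible with the refinement, and the monopole components (which carry nonzero nilpotent $\phi$) contribute, after an equivariant character calculation formally identical to G\"ottsche-Soergel, the rescaled series $(y^{1/2}-y^{-1/2})^2/(\phi_{-2,1}(r\tau,rz)\Delta(r\tau))$. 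Specialising at $y=-1$ must recover the unrefined Tanaka-Thomas output, which serves as a strong consistency check.

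The main obstacle is this refined lift in the strictly semistable case: the Maulik-Thomas definition requires delicate bookkeeping of the symmetrised virtual structure sheaf when $H$-stability and $H$-semistability fail to coincide, and extracting the specific Jacobi form $\phi_{-2,1}(r\tau,rz)\Delta(r\tau)$ from the refined localisation on the nilpotent Higgs strata is where the real work will be. A secondary, largely combinatorial, difficulty is tracking the signs and phases $e^{i\pi(r-1)mc_1^2/r}$ that mediate between the equivariant orientation choices on $K3^{[n]}$ and the physicists' conventions of \cite{LL}; once these signs are fixed and the refined monopole computation is carried out, the remainder of the proof is a direct comparison of $q$-expansions via the Hecke identity above.
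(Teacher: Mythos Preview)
The statement you are trying to prove is stated in the paper as a \emph{conjecture}, and the paper makes no attempt to prove it. What the paper does prove is the subsequent Proposition, namely that Conjecture~\ref{K3conj} \emph{implies} Conjecture~\ref{conj3} for $K3$ surfaces. So there is no ``paper's own proof'' to compare against.

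That said, your outline is consistent with what the paper itself says about the status of the conjecture. For $r\nmid Hc_1$ (so that stable $=$ semistable), the paper explicitly notes that Yoshioka's deformation equivalence $M_{K3}^H(r,c_1,c_2)\sim_{\mathrm{def}} K3^{[\vd/2]}$ together with the G\"ottsche--Soergel formula essentially gives the instanton formula; your Hecke-averaging argument to pick out the correct residue class of $q$-powers is the right mechanism and is indeed routine. For $c_1=0$ and the \emph{unrefined} invariants, the paper cites Tanaka--Thomas \cite{TT2} as having proved the formula.

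The genuine gap --- which you correctly identify --- is the $y$-refined computation in the strictly semistable regime, i.e.~lifting the Tanaka--Thomas argument through the Maulik--Thomas definition. This is not a matter of bookkeeping: at the time of the paper the refined invariants in the semistable case were new (the paper cites \cite{MT,Tho} as ``in preparation'' or ``preprint''), and extracting the specific Jacobi form $\phi_{-2,1}(r\tau,rz)\Delta(r\tau)$ from the refined localisation on nilpotent Higgs strata had not been carried out. Your proposal is an honest sketch of what a proof would require, but the ``main obstacle'' you flag is precisely why the paper leaves this as a conjecture rather than a theorem.
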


The following can be seen as evidence for Conjecture \ref{conj3}. The proof uses the well-known transformation properties of $\phi_{-2,1}$ and $\Delta$ \cite{EZ}.
\begin{proposition}
Conjecture \ref{K3conj} implies Conjecture \ref{conj3} for K3 surfaces.
\end{proposition}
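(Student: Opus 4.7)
The plan is to check the two transformation laws of Conjecture \ref{conj3} separately on the formula in Conjecture \ref{K3conj}. For a K3 surface one has $\chi=2$, $K^2=0$, $e=24$, $b_2=22$, and $\sigma=-16$, so after canceling $(y^{1/2}-y^{-1/2})^2$ the target identities become
\[
\sfZ_{K3,H,r,c_1}(\tau+1,z)=e^{-\frac{\pi i(r-1)}{r}c_1^2}\sfZ_{K3,H,r,c_1}(\tau,z)
\]
and
\[
\sfZ_{K3,H,r,c_1}(-1/\tau,z/\tau)=r^{-11}\tau^{-10}e^{-2\pi irz^2/\tau}\sum_{[a]}e^{\frac{2\pi i}{r}c_1a}\sfZ_{K3,H,r,a}(\tau,z).
\]
Every piece of the formula in Conjecture \ref{K3conj} is built from the weight $10$, index $1$ Jacobi cusp form $G(\tau,z):=\phi_{-2,1}(\tau,z)\Delta(\tau)$. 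Writing $F^{(m)}(\tau,z):=G((\tau+m)/r,z)^{-1}$ for $m=0,\dots,r-1$ and $F^{(\infty)}(\tau,z):=G(r\tau,rz)^{-1}$, one has
\[
\sfZ_{K3,H,r,c_1}/(y^{1/2}-y^{-1/2})^2=\tfrac{1}{r}\textstyle\sum_m e^{\pi i(r-1)mc_1^2/r}F^{(m)}+F^{(\infty)}\delta_{c_1,0}.
\]

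For the $T$-transformation, $G$ is $1$-periodic in its first slot, so $F^{(\infty)}$ is invariant and the reindexing $m\mapsto m+1\pmod r$ cycles the instanton sum, producing exactly the phase $e^{-\pi i(r-1)c_1^2/r}$; consistency with the monopole piece is automatic since $\delta_{c_1,0}$ forces that phase to be trivial. The $S$-transformation is the main content. First I will show that $S$ permutes $\{F^{(0)},\dots,F^{(r-1)},F^{(\infty)}\}$ via $0\leftrightarrow\infty$ and $m\leftrightarrow -m^{-1}\pmod r$ for $m\neq 0$. The case $0\leftrightarrow\infty$ follows from $G(-1/\sigma,w/\sigma)=\sigma^{10}e^{2\pi iw^2/\sigma}G(\sigma,w)$ applied with $\sigma=r\tau$ or $\sigma=\tau/r$. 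For $m\neq 0$, since $r$ is prime and $\gcd(m,r)=1$, choose $n,k\in\Z$ with $mn-kr=1$ and factor
\[
\begin{pmatrix}m&-1\\r&0\end{pmatrix}=\begin{pmatrix}m&k\\r&n\end{pmatrix}\begin{pmatrix}1&-n\\0&r\end{pmatrix}
\]
with the left factor in $SL_2(\Z)$; applying the $SL_2(\Z)$-transformation of $G$ via this matrix and the $1$-periodicity of $G$ then identifies $F^{(m)}|_S$ with $\tau^{-10}e^{-2\pi irz^2/\tau}F^{(m')}$ where $m'\equiv -n\equiv -m^{-1}\pmod r$.

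Finally, expanding the right-hand side using Conjecture \ref{K3conj} produces the theta-like Gauss sums $\sum_{[a]}e^{2\pi ic_1a/r}e^{\pi i(r-1)ma^2/r}$, which are evaluated by Propositions \ref{VWlattice} (for $r=2$) and \ref{LLlattice} (for $r$ odd prime). For a K3 surface the prefactor $\epsilon(m)^{b_2}e^{-\pi i(r-1)^2\sigma/8}$ collapses to $1$ since $b_2=22$ is even and $\sigma=-16$ makes $e^{2\pi i(r-1)^2}=1$; the sum becomes $r^{11}e^{\pi i(r-1)nc_1^2/r}$ with $mn\equiv -1\pmod r$, matching exactly, after the reindexing $m\mapsto -m^{-1}$, the phases produced in the previous step. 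The predicted factor $r^{-11}$ and scaling $\tau^{-10}e^{-2\pi irz^2/\tau}$ then drop out as claimed. The hard part will be the sign bookkeeping: aligning the reciprocity $mn\equiv -1\pmod r$ from Labastida-Lozano with the Bezout identity $mn-kr=1$ used in the cusp permutation, where the crucial minus sign arises from the off-diagonal entry $-1$ in $\begin{pmatrix}m&-1\\r&0\end{pmatrix}$, and verifying that the $r=2$ case (using Vafa-Witten's formula) fits into the same framework with no extra signs.
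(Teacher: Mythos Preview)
Your proposal is correct and follows essentially the same approach as the paper: cycle the instanton terms under $T$, swap $F^{(0)}\leftrightarrow F^{(\infty)}$ and permute $F^{(m)}\mapsto F^{(-m^{-1}\bmod r)}$ under $S$ via the weight-$10$, index-$1$ Jacobi transformation of $\phi_{-2,1}\Delta$ (your explicit $\mathrm{SL}_2(\Z)$ factorization is exactly how one derives the transformation the paper simply states), and then match coefficients on the right-hand side using the Gauss sums of Propositions~\ref{VWlattice}--\ref{LLlattice} with $b_2=22$, $\sigma=-16$. The one point you gloss over is that the wraparound $m=r-1\to m=0$ in the $T$-step, and the alignment of the phases $e^{\pi i(r-1)mc_1^2/r}$ with $e^{\pi i(r-1)n'c_1^2/r}$ (where $m\equiv n'\bmod r$ but not on the nose) in the $S$-step, both require $e^{\pi i(r-1)c_1^2}=1$; this holds because the $K3$ intersection form is even, and the paper makes this explicit.
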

\begin{proof}
\noindent \textbf{Step 1:} For $\tau \mapsto \tau+1$, $z \mapsto z$, we claim
$$
\sfZ_{K3,H,r,c_1}(q,y) \mapsto e^{-\frac{i \pi(r-1)}{r} c_1^2} \sfZ_{K3,H,r,c_1}(q,y).
$$
The monopole branch contribution does not transform. For the transformation of the instanton contribution we use
\begin{align*}
 \Delta\Big(\frac{\tau+m}{r}\Big) &\mapsto \Delta\Big(\frac{\tau+m+1}{r}\Big), \\
\phi_{-2,1}\Big(\frac{\tau+m}{r},z\Big) &\mapsto \phi_{-2,1}\Big(\frac{\tau+m+1}{r},z\Big).
\end{align*}
This transformation maps term $m$ to term $m+1$ (modulo $r$) up to a factor $e^{\frac{\pi i(r-1)}{r} c_1^2}$. In particular, term $r-1$ gets mapped to term $0$, for which we use
$$
e^{\pi i (r-1) c_1^2} = 1.
$$
This equality follows at once from the fact that $c_1^2$ is even on a $K3$ surface. \\

\noindent \textbf{Step 2:} For $\tau \mapsto -1/\tau$, $z \mapsto z/\tau$, we claim
$$
\frac{\sfZ_{K3,H,r,c_1}(q,y)}{(y^{\frac{1}{2}} - y^{-\frac{1}{2}})^2} \mapsto r^{-11} \, \tau^{-10} \, e^{-\frac{2 \pi i z^2}{\tau} r} \sum_{[a]} e^{\frac{2 \pi i}{r} c_1a} \frac{\sfZ_{K3,H,r,a}(q,y)}{(y^{\frac{1}{2}} - y^{-\frac{1}{2}})^2}.
$$
For any $m = 1, \ldots, r-1$, we define $n \in \{1, \ldots, r-1\}$ by the equation $mn \equiv -1 \mod r$. In order to prove the claim, we use the following transformations
\begin{align*}
\Delta(r \tau) &\mapsto \Big( \frac{\tau}{r} \Big)^{12} \Delta\Big( \frac{\tau}{r} \Big), \\
\Delta\Big( \frac{\tau+m}{r} \Big) &\mapsto \tau^{12} \Delta\Big( \frac{\tau+n}{r} \Big), \\
\phi_{-2,1}(r\tau,rz) &\mapsto \Big( \frac{\tau}{r} \Big)^{-2} \, e^{\frac{2 \pi i z^2}{\tau} r} \phi_{-2,1}\Big(\frac{\tau}{r},z \Big), \\
\phi_{-2,1}\Big( \frac{\tau+m}{r},z \Big) &\mapsto \tau^{-2} \, e^{\frac{2 \pi i z^2}{\tau} r} \phi_{-2,1}\Big( \frac{\tau+n}{r},z \Big).
\end{align*}
We also use the following lattice identities (Propositions \ref{VWlattice}, \ref{LLlattice})
\begin{align*}
\sum_{[x] \in H^2(K3,\Z_r)} e^{\frac{2 \pi i}{r} (c_1x)} &= r^{22} \delta_{c_1,0}, \\
\sum_{[x] \in H^2(K3,\Z_r)} e^{\frac{2 \pi i}{r} (c_1x)} e^{\frac{\pi i (r-1)}{r} m x^2} &= r^{11} e^{\frac{\pi i (r-1)}{r} n c_1^2},
\end{align*}
where we used $(-1)^{c_1^2}=1$. The result now follows from a direct calculation.
\end{proof}

\subsection{Rank 2 modularity} 

\begin{proposition} 
The formula for $\sfZ_{S,H,2,c_1}(q,y)$ from Remarks \ref{rank2cor} and \ref{rank2conj} satisfies Conjecture \ref{conj3}.
\end{proposition}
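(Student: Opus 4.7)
The plan is to verify the two transformation laws of Conjecture \ref{conj3} separately for $r=2$, following the strategy used for $r=1$ and for $K3$, but now tracking how the two instanton terms of Remark \ref{rank2cor} interact with the single monopole term of Remark \ref{rank2conj}.

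\textbf{T-transformation.} Under $\tau \mapsto \tau+1$, $z \mapsto z$ I would record $q \mapsto q$, $q^{1/2} \mapsto -q^{1/2}$, $\eta \mapsto e^{\pi i/12}\eta$, $\theta_3 \mapsto \theta_3$, $\theta_2 \mapsto i\theta_2$, and (by Lemma \ref{Dedekind} or directly $\epsilon(1,1,0,1)=-1$) $\phi_{-2,1}^{1/2}\Delta^{1/2} \mapsto -\phi_{-2,1}^{1/2}\Delta^{1/2}$. These permute the theta combinations so that the two ratios $(\theta_3+\theta_2)/(\theta_3-\theta_2)$ and $(\theta_3+i\theta_2)/(\theta_3-i\theta_2)$ swap; consequently the two instanton terms are interchanged while the monopole term (expressed in $q^2$) is invariant. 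A direct check shows each piece picks up precisely $e^{\pi i K^2/6}e^{-\pi i c_1^2/2}$, as required.

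\textbf{S-transformation of the modular building blocks.} For $\tau \mapsto -1/\tau$, $z \mapsto z/\tau$ I would establish:
(i) the standard identities $\eta(-1/\tau)^{-2} = (\tau/i)^{-1}\eta(\tau)^{-2}$ and $\phi^{1/2}\Delta^{1/2}(-1/\tau,z/\tau) = -\tau^5 e^{\pi i z^2/\tau}\phi^{1/2}\Delta^{1/2}(\tau,z)$ (Lemma \ref{Dedekind});
(ii) $\phi^{1/2}\Delta^{1/2}(q^{1/2},y)\big|_S = -(2\tau)^5 e^{2\pi i z^2/\tau}\phi^{1/2}\Delta^{1/2}(q^2,y^2)$, obtained by the substitution $\sigma = 2\tau$ and applying (i) at $(\sigma, 2z)$;
(iii) $\phi^{1/2}\Delta^{1/2}(-q^{1/2},y)\big|_S = -\tau^5 e^{2\pi i z^2/\tau}\phi^{1/2}\Delta^{1/2}(-q^{1/2},y)$, via the matrix $\bigl(\begin{smallmatrix} 1 & -1 \\ 2 & -1\end{smallmatrix}\bigr)\in \mathrm{SL}(2,\Z)$ sending $(\tau+1)/2$ to $(\tau-1)/(2\tau)$; Lemma \ref{Dedekind} with $s(1,2)=0$ yields $\epsilon=-1$;
(iv) the theta identifications $\theta_3 = \Theta_{A_1}(\tau,z/2)$, $\theta_3+\theta_2 = \Theta_{A_1^\vee}(\tau,z/2)$, combined with \eqref{STh}--\eqref{Sv}, so $S$ swaps $A_1$ with $A_1^\vee$ and produces the prefactor $(\tau/i)^{1/2}e^{\pi i z^2/(2\tau)}/\sqrt{2}$.
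Items (ii)--(iii) show that the first instanton term ($q^{1/2}$) and the monopole term ($q^2$) are interchanged, while the second instanton term ($-q^{1/2}$) is $S$-self-dual; item (iv) swaps the theta ratios of the instanton and monopole formulae.

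\textbf{Matching the sums.} It remains to reconcile $\sum_{a}\SW(a)(-1)^{ac_1}(\cdots)^{aK}$ on one side with $\sum_{[a]}e^{\pi i c_1 a}\,\sfZ_{S,H,2,a}$ on the other. For the instanton--monopole swap I use the symmetry $\SW(K-a) = (-1)^\chi \SW(a)$ of \eqref{SWeqn} (which inverts $\theta_3/\theta_2$ under $a \mapsto K-a$) together with the orthogonality $\sum_{[x]}(-1)^{c_1 x} = 2^{b_2}\delta_{c_1,0}$ of Proposition \ref{VWlattice}, which collapses the lattice sum on the $S$-dual side to the SW-sum on the original side and absorbs the factor $2^{1-e/2}$ via $e = 12\chi - K^2$. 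For the self-dual second instanton, the Gauss-sum identity $\sum_{[x]}(-1)^{c_1 x}i^{x^2} = 2^{b_2/2}i^{\sigma/2 - c_1^2}$ produces the prefactor $i^{-K^2/2}$ through $\sigma = K^2 - 8\chi$ and simultaneously recovers the $i^{c_1^2}$ appearing in the second instanton term. Finally, the $\tau^5$, $(\tau/i)^{1/2}$, and $e^{\pi i z^2/\tau}$ contributions weighted by $\chi$ and $K^2$ combine to yield the target multiplier $\tau^{-5\chi+K^2/2}e^{\frac{2\pi i z^2}{\tau}(-\chi - K^2/4)}$.

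\textbf{Main obstacle.} The delicate step is the simultaneous bookkeeping of phases: the signs from Dedekind-sum multipliers of $\phi_{-2,1}^{1/2}\Delta^{1/2}$ at the three cusps $\tau/2,\;(\tau+1)/2,\;2\tau$; the eighth roots of unity from lattice Gauss sums over $H^2(S,\Z_2)$; and the fourth roots appearing in the theta-function $S$-transformation at half-period $z/2$ --- all must combine to produce exactly $(-1)^\chi\cdot i^{-K^2/2}$ with no residual phase. Once each modular building block has been transformed individually and the identities of Proposition \ref{VWlattice} applied, what is left is a finite phase-matching identity that can be verified on a minimal lattice (e.g.\ $b_2^+=1$ and small $b_2^-$), fixing all remaining constants.
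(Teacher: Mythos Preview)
Your proposal is correct and follows essentially the same approach as the paper: list the $T$- and $S$-transformations of $\eta$, $\phi_{-2,1}^{1/2}\Delta^{1/2}$ at the three relevant arguments, and $\theta_2,\theta_3$; then close the argument with the Seiberg-Witten identities \eqref{SWeqn}, Wu's formula $(-1)^{c_1K}=(-1)^{c_1^2}$, and the lattice sums of Proposition \ref{VWlattice}. One small inaccuracy: under $T$ the two theta ratios do \emph{not} simply swap---$(\theta_3+i\theta_2)/(\theta_3-i\theta_2)$ goes to the \emph{inverse} of $(\theta_3+\theta_2)/(\theta_3-\theta_2)$---so the matching of the two instanton terms requires the reparametrization $a\mapsto K-a$ (which you correctly invoke in the $S$-step); the paper makes this explicit in both steps.
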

\begin{proof}
\textbf{Step 1:} For $\tau \mapsto \tau+1$, $z \mapsto z$, we use the following transformations
\begin{align*}
\eta(q) &\mapsto e^{\frac{\pi i}{12}} \eta(q), \\
\phi_{-2,1}(q^{\frac{1}{2}},y)^{\frac{1}{2}} \Delta(q^{\frac{1}{2}})^{\frac{1}{2}} &\mapsto \phi_{-2,1}(-q^{\frac{1}{2}},y)^{\frac{1}{2}} \Delta(-q^{\frac{1}{2}})^{\frac{1}{2}}, \\
\phi_{-2,1}(-q^{\frac{1}{2}},y)^{\frac{1}{2}} \Delta(q^{\frac{1}{2}})^{\frac{1}{2}} &\mapsto -\phi_{-2,1}(q^{\frac{1}{2}},y)^{\frac{1}{2}} \Delta(-q^{\frac{1}{2}})^{\frac{1}{2}} \\
\theta_3(q,y) &\mapsto \theta_3(q,y), \\
\theta_2(q,y) &\mapsto i \theta_2(q,y). 
\end{align*}
The rest of the calculation is straight-forward. On the instanton branch, it involves changing the summation variable $a$ in 
$$
\sum_{a \in H^2(S,\Z)} \SW(a) (-1)^{c_1 a} ( \cdots )^{a K} 
$$
to $K-a$, which gives
$$
\sum_{a \in H^2(S,\Z)} \SW(K-a) (-1)^{c_1 (K-a)} ( \cdots )^{(K-a)K}. 
$$
In addition, one requires the following equations 
\begin{align}
\begin{split} \label{listofeqn}
a K = a^2, \qquad\SW(K - a) = (-1)^{\chi} \, \SW(a), \qquad(-1)^{c_1 K} = (-1)^{c_1^2},
\end{split}
\end{align}
where the first two equations hold for any Seiberg-Witten basic class $a$ by \eqref{SWeqn}. The third is Wu's formula, which holds for any $c_1 \in H^2(S,\Z)$. On the monopole branch, one only requires
\begin{align*}
i^{-aK} \delta_{c_1,a} = i^{-a^2} \delta_{c_1,a}=  i^{-c_1^2} \delta_{c_1,a},
\end{align*}
where again $a$ is a Seiberg-Witten basic class. \\

\noindent \textbf{Step 2:} For $\tau \mapsto -1/\tau$, $z \mapsto z/\tau$, we use the following transformations
\begin{align*}
\eta(q) &\mapsto \Big( \frac{\tau}{i} \Big)^{\frac{1}{2}} \eta(q), \\
\phi_{-2,1}(q^2,y^2)^{\frac{1}{2}} \Delta(q^{2})^{\frac{1}{2}} &\mapsto - \Big(\frac{\tau}{2} \Big)^{5} e^{\frac{2 \pi i z^2}{\tau}} \phi_{-2,1}(q^{\frac{1}{2}},y)^{\frac{1}{2}} \Delta(q^{\frac{1}{2}})^{\frac{1}{2}}, \\
\phi_{-2,1}(-q^{\frac{1}{2}},y)^{\frac{1}{2}} \Delta(q^{\frac{1}{2}})^{\frac{1}{2}} &\mapsto - \tau^{5} e^{\frac{2 \pi i z^2}{\tau}} \phi_{-2,1}(-q^{\frac{1}{2}},y)^{\frac{1}{2}} \Delta(q^{\frac{1}{2}})^{\frac{1}{2}}, \\
\theta_3(q,y) &\mapsto \frac{1}{\sqrt{2}} \Big(\frac{\tau}{i} \Big)^{\frac{1}{2}} e^{\frac{\pi i z^2}{2\tau}} (\theta_3(q,y)+\theta_2(q,y)), \\
\theta_2(q,y) &\mapsto \frac{1}{\sqrt{2}} \Big(\frac{\tau}{i} \Big)^{\frac{1}{2}} e^{\frac{\pi i z^2}{2\tau}} (\theta_3(q,y)-\theta_2(q,y)).
\end{align*}
For the third transformation, we use coordinates $\tilde{\tau} = \frac{\tau+1}{2}$, $\tilde{z} = z$, which allows us to apply \eqref{transepsilon} and Lemma \ref{Dedekind}. The transformations for the theta functions are standard (see \eqref{STh} or  \cite[Sect.~5]{VW}).

Using Proposition \ref{VWlattice}, changing the summation variable $a$ as in Step 1 and using equations \eqref{listofeqn}, the result follows.
\end{proof}

\subsection{Rank 3 modularity}

\begin{lemma} \label{thetalemma} Under the transformation 
$\tau\mapsto -1/\tau$, $z\mapsto z/\tau$ we have 
\begin{equation} \label{transtheta00} 
\Theta_{A_2,(0,0)}(q^{\frac{1}{2}},y) \mapsto \frac{1}{\sqrt{3}} \Big( \frac{\tau}{i} \Big)  e^{\frac{2 \pi i z^2}{\tau}} \Theta_{A_2^{\vee},(0,0)}(q^{\frac{1}{6}},y).
\end{equation}
Furthermore,  we have the following identities
\begin{equation}\label{ident}
\begin{split}
\Theta_{A_2,(1,0)}(q^{\frac{1}{2}},y) &= -\frac{1}{2} \Theta_{A_2,(0,0)}(q^{\frac{1}{2}},y) + \frac{1}{2} \Theta_{A_2^\vee,(0,0)}(q^{\frac{1}{6}},y),  \\
\Theta_{A_2^{\vee},(0,1)}(q^{\frac{1}{6}},y) &= \frac{3}{2} \Theta_{A_2,(0,0)}(q^{\frac{1}{2}},y) - \frac{1}{2} \Theta_{A_2^\vee,(0,0)}(q^{\frac{1}{6}},y),  \\
\Theta_{A_2^{\vee},(0,0)}(\epsilon q^{\frac{1}{6}},y) &= (2+\epsilon) \Theta_{A_2,(0,0)}(q^{\frac{1}{2}},y) - (1+\epsilon) \Theta_{A_2^\vee,(0,0)}(q^{\frac{1}{6}},y),  \\
\Theta_{A_2^{\vee},(0,0)}(\epsilon^2 q^{\frac{1}{6}},y) &= (1-\epsilon) \Theta_{A_2,(0,0)}(q^{\frac{1}{2}},y) + \epsilon \, \Theta_{A_2^\vee,(0,0)}(q^{\frac{1}{6}},y),  \\
\Theta_{A_2^{\vee},(0,1)}(\epsilon q^{\frac{1}{6}},y) &= \frac{1-\epsilon}{2} \Theta_{A_2,(0,0)}(q^{\frac{1}{2}},y) + \frac{1+\epsilon}{2} \Theta_{A_2^\vee,(0,0)}(q^{\frac{1}{6}},y),  \\
\Theta_{A_2^{\vee},(0,1)}(\epsilon^2 q^{\frac{1}{6}},y) &= \Big(1+\frac{\epsilon}{2}\Big)\Theta_{A_2,(0,0)}(q^{\frac{1}{2}},y) -\frac{\epsilon}{2} \Theta_{A_2^\vee,(0,0)}(q^{\frac{1}{6}},y),
\end{split}
\end{equation}
where $\epsilon:=e^{\frac{2 \pi i}{3}}$. In particular, under
$\tau\mapsto -1/\tau$, $z\mapsto z/\tau$ we have 
\begin{align*}
Z(q^{\frac{1}{6}},y) &\mapsto \frac{Z(q^{\frac{1}{6}},y)+2}{Z(q^{\frac{1}{6}},y)-1} = W(q^{\frac{1}{2}},y), \\
Z_{\pm}(q^{\frac{1}{6}},y) &\mapsto W_{\pm}(q^{\frac{1}{2}},y), \\
Z_{\pm}(\epsilon q^{\frac{1}{6}},y) &\mapsto \epsilon Z_{\pm}(\epsilon^2 q^{\frac{1}{6}},y), \\
Z_{\pm}(\epsilon^2 q^{\frac{1}{6}},y) &\mapsto \epsilon^2 Z_{\mp}(\epsilon q^{\frac{1}{6}},y).
\end{align*}
\end{lemma}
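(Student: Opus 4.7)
The plan proceeds in three stages: first, establish the transformation \eqref{transtheta00} from the general theta transformation \eqref{STh}; second, derive the six identities of \eqref{ident} via a coset decomposition of $A_2^\vee/A_2 \cong \Z/3$; and third, extract the consequences for $Z$, $W$, $Z_\pm$, $W_\pm$ from this linear data.

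For the first stage, apply \eqref{STh} with $\Gamma = A_2$ (rank $2$, determinant $3$) and $x=(z,z)$; since $(z,z)A(z,z)^t = 2z^2$, the identity $\sqrt{3}\,\Theta_{A_2}|_1 S = \Theta_{A_2^\vee}$ yields \eqref{transtheta00} directly. The companion formula
$$\Theta_{A_2^\vee,(0,0)}(q^{1/6},y) \mapsto \sqrt{3}\,(\tau/i)\,e^{2\pi iz^2/\tau}\,\Theta_{A_2,(0,0)}(q^{1/2},y),$$
obtained by applying \eqref{STh} with $\Gamma = A_2^\vee$ (determinant $1/3$), will be needed in the third stage.

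For the second stage, split each theta function into three sub-sums $S_j$ ($j=0,1,2$) indexed by $(m-n)\bmod 3$. Using the congruence $m^2+mn+n^2 \equiv (m-n)^2 \pmod 3$, one obtains
$$\Theta_{A_2^\vee,(0,0)}(\epsilon^k q^{1/6},y) = S_0 + \epsilon^{2k}(S_1+S_2), \ \ \Theta_{A_2^\vee,(0,1)}(\epsilon^k q^{1/6},y) = S_0+\epsilon^{1+2k}S_1+\epsilon^{2+2k}S_2,$$
for $k=0,1,2$. The involution $(m,n)\leftrightarrow (n,m)$ shows $S_1=S_2$, and the second equality of \eqref{STh} applied to cosets $A_2^\vee = \bigsqcup_j (A_2+v_j)$ (with $v_0 = 0$, $v_1 = (1/3,-1/3)$, $v_2 = (2/3,-2/3)$) identifies $S_0 = \Theta_{A_2,(0,0)}(q^{1/2},y)$ and $S_1 = \Theta_{A_2,(1,0)}(q^{1/2},y)$. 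The first identity of \eqref{ident} follows at once, and the remaining five identities reduce to elementary algebra using $1+\epsilon+\epsilon^2=0$ and $\epsilon^3=1$.

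For the third stage, combining the first two identities of \eqref{ident} with the first-stage transformations shows that $W(q^{1/2},y)$ is the image of $Z(q^{1/6},y)$ under $\tau\mapsto -1/\tau$, $z\mapsto z/\tau$, and that $W=(Z+2)/(Z-1)$. Because the quadratic defining $Z_\pm$ depends formally on $Z(x,y)$ and $Z(x,1)$, it follows that $Z_\pm(q^{1/6},y)\mapsto W_\pm(q^{1/2},y)$. For the $\epsilon$-twisted cases, identities $3$--$6$ of \eqref{ident} combined with the first-stage transformations give
$$\Theta_{A_2^\vee,(0,0)}(\epsilon q^{1/6},y) \mapsto (-1-2\epsilon)\,\kappa\,\Theta_{A_2^\vee,(0,0)}(\epsilon^2 q^{1/6},y), \ \ \Theta_{A_2^\vee,(0,1)}(\epsilon q^{1/6},y) \mapsto (2+\epsilon)\,\kappa\,\Theta_{A_2^\vee,(0,1)}(\epsilon^2 q^{1/6},y),$$
where $\kappa = (\tau/i)e^{2\pi i z^2/\tau}/\sqrt{3}$; the key cancellation $(2+\epsilon)^2 = 3(1+\epsilon)$ is used to verify the second formula. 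Since $(-1-2\epsilon)/(2+\epsilon) = \epsilon^2$, this gives $Z(\epsilon q^{1/6},y)\mapsto \epsilon^2 Z(\epsilon^2 q^{1/6},y)$. Substituting $\zeta=\epsilon\omega$ in the quadratic for $Z_\pm(\epsilon q^{1/6},y)$ then yields $Z_\pm(\epsilon q^{1/6},y)\mapsto \epsilon Z_\pm(\epsilon^2 q^{1/6},y)$, and the final relation follows by applying the involution once more (the swap $\pm\leftrightarrow\mp$ arising from which root is labeled as ``$+$'' versus ``$-$''). The main delicate point will be the bookkeeping in the second stage: ensuring that the two parametrizations of the non-trivial cosets of $A_2^\vee/A_2$---by $(1/3,-1/3)\in A_2\otimes\Q$ in $\Theta_{A_2,(1,0)}$, versus by $(1,-1)$ under the dual pairing in $\Theta_{A_2^\vee,(0,1)}$---are translated consistently.
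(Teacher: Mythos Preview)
Your proposal is correct and follows the same overall architecture as the paper, but the middle stage is organized differently. The paper obtains identity~1 from the coset decomposition exactly as you do, but then derives identity~2 by first computing the $S$-transform of $\Theta_{A_2,(1,0)}$ directly via \eqref{Sv} (obtaining $\Theta_{A_2,(1,0)}\mapsto \kappa\,\Theta_{A_2^\vee,(0,1)}$) and applying $S$ to identity~1; identities~3--6 are then produced by iterating $\tau\mapsto\tau+1$. You instead extract all six identities uniformly from the single congruence splitting $S_0+S_1+S_2$ and the relation $S_1=S_2$. Your route is more elementary and self-contained (it never needs the commutation formula \eqref{Sv}), while the paper's route makes the modular $S$/$T$ interplay more visible and gets the $S$-transform of $\Theta_{A_2,(1,0)}$ as a byproduct. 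For the third stage the paper simply asserts that the consequences for $Z$, $W$, $Z_\pm$ follow from \eqref{transtheta00} and \eqref{ident}; your explicit computation (including the identification $(-1-2\epsilon)/(2+\epsilon)=\epsilon^2$ and the substitution $\zeta=\epsilon\omega$) fills in what the paper leaves to the reader. The only point you should tighten is the final $\pm\leftrightarrow\mp$ swap: rather than invoking the involution abstractly, it is cleaner to fix the labeling of $Z_\pm$ once (e.g.\ by leading coefficient in $q$) and check directly which branch maps to which.
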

\begin{proof}
The first equality of \eqref{STh} gives
\begin{equation}\label{Strans}
\begin{split}
\sqrt{3} \Big( \frac{\tau}{i} \Big)^{-1} e^{-\frac{2\pi iz^2}{\tau}}\Theta_{A_2,(0,0)}(q^{\frac{1}{2}},y)|_{(\tau,z)=(-1/\tau,z/\tau)}&
=\Theta_{A_2^\vee,(0,0)}(q^{\frac{1}{6}},y),\\
\frac{1}{\sqrt{3}} \Big( \frac{\tau}{i} \Big)^{-1}e^{-\frac{2\pi iz^2}{\tau}}\Theta_{A_2^\vee,(0,0)}(q^{\frac{1}{6}},y)|_{(\tau,z)=(-1/\tau,z/\tau)}&=
\Theta_{A_2,(0,0)}(q^{\frac{1}{2}},y).
\end{split}
\end{equation}
Here we note that $(z,z)$ with respect to the basis \eqref{basis} is also $(z,z)$. Using the first equality of \eqref{STh} combined with \eqref{Sv} gives
\begin{align*}
\Big( \frac{\tau}{i}\Big)^{-1} e^{-\frac{2\pi iz^2}{\tau}}\Theta_{A_2,(1,0)}(q^{\frac{1}{2}},y)|_{(\tau,z)=(-1/\tau,z/\tau)} &= \Theta_{A_2}|((1/3,-1/3),0)|_{1} S (\tau,(z,z)) \\
&= \Theta_{A_2}(\tau,x)|_{1} S |(0,(-1/3,1/3))(\tau,(z,z)), \\
&=\frac{1}{\sqrt{3}} \Theta_{A_2^\vee} |(0,(-1,1))(\tau,(z,z)), \\
&=\frac{1}{\sqrt{3}} \Theta_{A_2^\vee,(0,1)}(q^{\frac{1}{6}},y).
\end{align*}
Here the third equality uses that $(-\frac{1}{3},\frac{1}{3})$ with respect to the basis \eqref{basis} equals $(-1,1)$. This shows
\begin{equation} \label{10trans}
\sqrt{3} \Big( \frac{\tau}{i} \Big)^{-1} e^{-\frac{2\pi iz^2}{\tau}}\Theta_{A_2,(1,0)}(q^{\frac{1}{2}},y)|_{(\tau,z)=(-1/\tau,z/\tau)} =\Theta_{A_2^\vee,(0,1)}(q^{\frac{1}{6}},y).
\end{equation}

Since $(0,0),(1/3,-1/3),(-1/3,1/3)$
is a system of representatives for $A_2^\vee/A_2$, the second equality of \eqref{STh} gives
\begin{align*}
&\Theta_{A_2^\vee,(0,0)}(q^{\frac{1}{6}},z)= \Theta_{A_2^\vee}(\tau,(z,z)) \\
&=\Theta_{A_2}(\tau,(z,z))+\Theta_{A_2}|((1/3,-1/3),0)(\tau,(z,z))+\Theta_{A_2}|((-1/3,1/3),0)(\tau,(z,z))\\
&=\Theta_{A_2,(0,0)}(q^{\frac{1}{2}},y)+2\Theta_{A_2,(1,0)}(q^{\frac{1}{2}},y).
\end{align*}
Therefore, we obtain
\begin{equation}\label{A10A00}
\Theta_{A_2,(1,0)}(q^{\frac{1}{2}},y)=- \frac{1}{2} \Theta_{A_2,(0,0)}(q^{\frac{1}{2}},y) + \frac{1}{2} \Theta_{A_2^\vee,(0,0)}(q^{\frac{1}{6}},y).
\end{equation}
Applying $\tau\mapsto -1/\tau$, $z\to z/\tau$ to this equation and using \eqref{Strans}, \eqref{10trans} gives
\begin{equation}\label{A01A00}
\Theta_{A^\vee_2,(0,1)}(q^{\frac{1}{6}},y)=\frac{3}{2}\Theta_{A_2,(0,0)}(q^{\frac{1}{2}},y)-\frac{1}{2}\Theta_{A_2^\vee,(0,0)}(q^{\frac{1}{6}},y).\end{equation}
Applying $\tau\mapsto \tau+1$ to \eqref{A10A00} and then using \eqref{A10A00} again gives 
\begin{equation}\label{theps}\begin{split}
\Theta_{A_2^\vee,(0,0)}(\epsilon^2 q^{\frac{1}{6}},y)&=2\epsilon \, \Theta_{A_2,(1,0)}(q^{\frac{1}{2}},y)+\Theta_{A_2,(0,0)}(q^{\frac{1}{2}},z)\\
&=
(1-\epsilon)\Theta_{A_2,(0,0)}(q^{\frac{1}{2}},y)+\epsilon \, \Theta_{A^\vee_2,(0,0)}(q^{\frac{1}{6}},y).
\end{split}
\end{equation}
Similarly applying $\tau\mapsto \tau+1$ to \eqref{A01A00} and then using \eqref{theps} gives
\begin{align*} \Theta_{A_2^\vee,(0,1)}(\epsilon^2q^{\frac{1}{6}}, y)&=\frac{3}{2}\Theta_{A_2,(0,0)}(q^{\frac{1}{2}},y)-\frac{1}{2}\Theta_{A_2^\vee,(0,0)}(\epsilon^2 q^{\frac{1}{6}},y)\\
&=\Big(1+\frac{\epsilon}{2}\Big)\Theta_{A_2,(0,0)}(q^{\frac{1}{2}},y)-\frac{\epsilon}{2}\Theta_{A_2^\vee,(0,0)}(q^{\frac{1}{6}},y).
\end{align*}
The other two formulae of \eqref{ident} follow by applying $\tau\mapsto \tau+1$ again. 

The rest of the lemma follows from \eqref{transtheta00} and \eqref{ident}. E.g.~the first two equations of \eqref{ident} imply $(Z(q^{\frac{1}{6}},y)-1) W(q^{\frac{1}{2}},y) = Z(q^{\frac{1}{6}},y)+2$.
\end{proof}

\begin{proposition}
The formula for $\sfZ_{S,H,3,c_1}(q,y)$ from Corollary \ref{corconj1} and Conjecture \ref{conj2} satisfies Conjecture \ref{conj3}.
\end{proposition}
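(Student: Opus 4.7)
The strategy mirrors the rank $2$ case treated in the previous proposition: I would verify the two modularity transformations of \eqref{modref} for $r=3$ by computing, factor by factor, how each piece of Corollary \ref{corconj1} and Conjecture \ref{conj2} transforms under $\tau\mapsto\tau+1$, $z\mapsto z$ and under $\tau\mapsto -1/\tau$, $z\mapsto z/\tau$, and then reassembling everything using Lemma \ref{thetalemma}, Proposition \ref{LLlattice}, the Seiberg--Witten identities \eqref{SWeqn}, Wu's formula, and Noether's formula.

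\textbf{Step 1 ($T$-transformation.)} Under $\tau\mapsto\tau+1$ the substitution $q^{1/6}\mapsto e^{\pi i/3}q^{1/6}$ cyclically permutes the three summands of Corollary \ref{corconj1}. I would track how $\eta(q)^3$, $\phi_{-2,1}(\epsilon^{2k}q^{1/3},y)^{1/2}\Delta(\epsilon^{2k}q^{1/3})^{1/2}$ (via Lemma \ref{Dedekind} applied to $\tfrac{\tau+k}{3}\mapsto\tfrac{\tau+k+1}{3}$), $\Theta_{A_2^\vee,(0,1)}(\epsilon^k q^{1/6},y)$ and $Z_\pm(\epsilon^k q^{1/6},y)$ transform for each $k=0,1,2$. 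Using $\SW(a)\ne 0 \Rightarrow a^2=aK$ from \eqref{SWeqn} and Wu's formula, these combine to produce the overall factor $(-1)^\chi e^{\pi i K^2/4} e^{-2\pi i c_1^2/3}$ required by the first line of \eqref{modref}. The monopole contribution involves only $q^3$ (invariant) and $q^{1/2}\mapsto -q^{1/2}$; the constraint $\delta_{c_1+a,b}$ forces $(a-b)^2\equiv c_1^2\bmod 3$, which together with $a^2=aK$ and the transformations of $\Theta_{A_2,(1,0)}(\pm q^{1/2},y)$ and $W_\pm(\pm q^{1/2},y)$ yields the same factor.

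\textbf{Step 2 ($S$-transformation.)} Lemma \ref{thetalemma} gives the key identifications $\Theta_{A_2^\vee,(0,1)}(q^{1/6},y)\leftrightarrow \Theta_{A_2,(1,0)}(q^{1/2},y)$ and $Z_\pm(q^{1/6},y)\leftrightarrow W_\pm(q^{1/2},y)$ (up to the factor $\tfrac{\tau}{i\sqrt{3}}e^{2\pi i z^2/\tau}$), so that the $k=0$ summand of the instanton contribution maps to an expression of monopole shape and, dually, the monopole contribution maps to a $k=0$ instanton-shape expression. The $k=1,2$ summands are exchanged via the transformation formulas for $\Theta_{A_2^\vee,(0,1)}(\epsilon^{\pm 1}q^{1/6},y)$ and $Z_\pm(\epsilon^{\pm 1}q^{1/6},y)$ obtained by combining Lemma \ref{thetalemma} with a shift of $\tau$, while $\phi_{-2,1}(\epsilon^{2k}q^{1/3},y)^{1/2}\Delta(\epsilon^{2k}q^{1/3})^{1/2}$ transforms via \eqref{transepsilon} applied to $\tilde\tau=\tfrac{\tau+k}{3}$, producing the required $\tau^5 e^{2\pi i z^2/\tau}$ factors and computable signs via Lemma \ref{Dedekind}. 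At the end of this step the image of $\sfZ_{S,H,3,c_1}/(y^{1/2}-y^{-1/2})^\chi$ has the algebraic shape of $\sum_{[a']} e^{2\pi i c_1 a'/3}\sfZ_{S,H,3,a'}/(y^{1/2}-y^{-1/2})^\chi$, but with the $[a']$-sum still written as a weighted double sum over $(a,b)\in H^2(S,\Z)^2$.

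\textbf{Step 3 (Gauss sums and bookkeeping.)} To convert $\epsilon^{(a-b)c_1}$ (on the instanton side) and $\delta_{c_1+a,b}$ (on the monopole side) into the required sum $\sum_{[a']\in H^2(S,\Z)/3H^2(S,\Z)} e^{2\pi i c_1 a'/3}$, I would apply Proposition \ref{LLlattice} with $r=3$, $m=1$, $n=2$, producing the prefactor $\epsilon(1)^{b_2}\,3^{b_2/2}\,e^{-\pi i \sigma/2}$. Reindexing $a\mapsto K-a$, $b\mapsto K-b$ (which by \eqref{SWeqn} contributes $(-1)^{2\chi}=1$ and swaps $ab\leftrightarrow (K-a)(K-b)$ in harmony with $Z_\pm\leftrightarrow W_\pm$) and applying Noether's formula $12\chi=K^2+e$ to rewrite the $\tau$-exponents then assembles the prefactor into exactly $(-1)^\chi\,3^{1-e/2}\,i^{-K^2/2}\,\tau^{-5\chi+K^2/2}\,e^{2\pi i z^2(-\tfrac{3}{2}\chi-K^2)/\tau}$ of \eqref{modref}. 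The main obstacle is this last step: the coordinated cancellation of every cube-root phase, every sign from Lemma \ref{Dedekind} for the three $\mathrm{SL}(2,\Z)$ matrices implicit in the $k=0,1,2$ decomposition, the Gauss-sum sign $\epsilon(1)^{b_2}e^{-\pi i \sigma/2}$, and the SW-reindexing signs must all line up exactly. A secondary but delicate check is that the many $e^{2\pi i z^2/\tau}$ factors produced by the separate theta and $\phi_{-2,1}^{1/2}\Delta^{1/2}$ transformations sum to exactly the required exponent $-\tfrac{3}{2}\chi-K^2$ predicted by \eqref{modref} at $r=3$.
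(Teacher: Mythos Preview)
Your proposal is correct and follows essentially the same route as the paper's proof: verify the $T$- and $S$-transformations factor by factor, use Lemma \ref{thetalemma} to identify the $k=0$ instanton summand with the monopole contribution and to swap the $k=1,2$ summands, and then invoke Proposition \ref{LLlattice}, the Seiberg--Witten identities \eqref{SWeqn}, and Noether's formula to match the constants. One small correction: in Step 3 you will need \emph{both} cases $m=1$ and $m=2$ of Proposition \ref{LLlattice} (with $\epsilon(1)=-1$ and $\epsilon(2)=+1$), one for each of the two swapped $k=1,2$ summands, not just $m=1$; and the $k=0\leftrightarrow$monopole identification uses only the first (orthogonality) identity of Proposition \ref{LLlattice}, not the Gauss sum.
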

\begin{proof}
\textbf{Step 1:} For $\tau \mapsto \tau+1$, $z \mapsto z$, we use the following transformations
\begin{align*}
\eta(q) &\mapsto e^{\frac{\pi i}{12}} \eta(q), \\
\phi_{-2,1}(q^3,y^3)^{\frac{1}{2}} \Delta(q^3)^{\frac{1}{2}} &\mapsto - \phi_{-2,1}(q^3,y^3)^{\frac{1}{2}} \Delta(q^3)^{\frac{1}{2}}, \\
\phi_{-2,1}(q^{\frac{1}{3}},y)^{\frac{1}{2}} \Delta(q^{\frac{1}{3}})^{\frac{1}{2}} &\mapsto \phi_{-2,1}(\epsilon q^{\frac{1}{3}},y)^{\frac{1}{2}} \Delta(\epsilon q^{\frac{1}{3}})^{\frac{1}{2}}, \\
\phi_{-2,1}(\epsilon q^{\frac{1}{3}},y)^{\frac{1}{2}} \Delta(\epsilon q^{\frac{1}{3}})^{\frac{1}{2}} &\mapsto \phi_{-2,1}(\epsilon^2 q^{\frac{1}{3}},y)^{\frac{1}{2}} \Delta(\epsilon^2 q^{\frac{1}{3}})^{\frac{1}{2}}, \\
\phi_{-2,1}(\epsilon^2 q^{\frac{1}{3}},y)^{\frac{1}{2}} \Delta(\epsilon^2 q^{\frac{1}{3}})^{\frac{1}{2}} &\mapsto - \phi_{-2,1}(q^{\frac{1}{3}},y)^{\frac{1}{2}} \Delta(q^{\frac{1}{3}})^{\frac{1}{2}}, \\
\Theta_{A_2,(0,0)}(q^{\frac{1}{2}},y) &\mapsto \Theta_{A_2,(0,0)}(q^{\frac{1}{2}},y), \\
\Theta_{A_2,(1,0)}(q^{\frac{1}{2}},y) &\mapsto \epsilon \, \Theta_{A_2,(1,0)}(q^{\frac{1}{2}},y), \\
\Theta_{A_2^\vee,v}(q^{\frac{1}{6}},y) &\mapsto \Theta_{A_2^\vee,v}(\epsilon^2 q^{\frac{1}{6}},y), \\
\Theta_{A_2^\vee,v}(\epsilon^2 q^{\frac{1}{6}},y) &\mapsto \Theta_{A_2^\vee,v}(\epsilon q^{\frac{1}{6}},y), \\
\Theta_{A_2^\vee,v}(\epsilon q^{\frac{1}{6}},y) &\mapsto \Theta_{A_2^\vee,v}(q^{\frac{1}{6}},y),
\end{align*}
for both $v = (0,0), (0,1)$. 

We deduce that the contribution of the instanton branch gets mapped to itself up to a factor $(-1)^{\chi} e^{\frac{\pi i}{4} K^2} \epsilon^{-c_1^2}$. The same holds for the contribution of the monopole branch, where we use the following identity
$$
\epsilon^{-K^2} \epsilon^{ab} \epsilon^{(K-a)(K-b)} \delta_{c_1+a,b} = \epsilon^{2ab -a^2-b^2} \delta_{c_1+a,b} = \epsilon^{2(b-a)^2} \delta_{c_1+a,b} = \epsilon^{-c_1^2}  \delta_{c_1+a,b},
$$
which holds for all Seiberg-Witten basic classes $a,b$ by \eqref{SWeqn}. \\

\noindent \textbf{Step 2:} For $\tau \mapsto -1/\tau$, $z \mapsto z/\tau$, we use the following transformations (Lemma's \ref{Dedekind} and \ref{thetalemma})
\begin{align*}
\eta(q) &\mapsto \Big( \frac{\tau}{i} \Big)^{\frac{1}{2}} \eta(q), \\
\phi_{-2,1}(q^3,y^3)^{\frac{1}{2}} \Delta(q^3)^{\frac{1}{2}} &\mapsto - \Big(\frac{\tau}{3}\Big)^5 e^{\frac{3 \pi i z^2}{\tau}} \phi_{-2,1}(q^{\frac{1}{3}},y)^{\frac{1}{2}} \Delta(q^{\frac{1}{3}})^{\frac{1}{2}},  \\
\phi_{-2,1}(\epsilon q^{\frac{1}{3}},y)^{\frac{1}{2}} \Delta(\epsilon q^{\frac{1}{3}})^{\frac{1}{2}} &\mapsto \tau^5 e^{\frac{3 \pi i z^2}{\tau}}\phi_{-2,1}(\epsilon^2 q^{\frac{1}{3}},y)^{\frac{1}{2}} \Delta(\epsilon^2 q^{\frac{1}{3}})^{\frac{1}{2}}, \\
\phi_{-2,1}(\epsilon^2 q^{\frac{1}{3}},y)^{\frac{1}{2}} \Delta(\epsilon^2 q^{\frac{1}{3}})^{\frac{1}{2}} &\mapsto \tau^5 e^{\frac{3 \pi i z^2}{\tau}}\phi_{-2,1}(\epsilon q^{\frac{1}{3}},y)^{\frac{1}{2}} \Delta(\epsilon q^{\frac{1}{3}})^{\frac{1}{2}}, \\
\Theta_{A_2,(0,0)}(q^{\frac{1}{2}},y) &\mapsto \frac{1}{\sqrt{3}} \Big( \frac{\tau}{i} \Big)  e^{\frac{2 \pi i z^2}{\tau}} \Theta_{A_2^{\vee},(0,0)}(q^{\frac{1}{6}},y).  
\end{align*}
The other required transformations can be derived from these using the identities of Lemma \ref{thetalemma}.
The terms involving $\Theta_{A_2^{\vee},(0,1)}(\epsilon q^{\frac{1}{6}},y)$ and $\Theta_{A_2^{\vee},(0,1)}(\epsilon^2 q^{\frac{1}{6}},y)$ in LHS of \eqref{modref} map to the terms involving $\Theta_{A_2^{\vee},(0,1)}(\epsilon^2 q^{\frac{1}{6}},y)$, $\Theta_{A_2^{\vee},(0,1)}(\epsilon q^{\frac{1}{6}},y)$ in RHS of \eqref{modref}. Here we use (besides the transformations listed above):
\begin{itemize}
\item Proposition \ref{LLlattice} and $\epsilon(2) = \big( \frac{1}{3}\big) = 1$, $\epsilon(1) = \big( \frac{2}{3}\big) = -1$,
\item $\epsilon^3 = 1$, where $\epsilon = e^{\frac{2 \pi i}{3}}$, $\chi = \frac{1}{12}(K^2+e)$, and $\sigma = -8\chi+K^2$,
\item replacing summation variables in $\sum_{a,b} \SW(a) \SW(b) \cdots$ by $K-a, K-b$,
\item $aK = a^2$ and $\SW(K-a) = (-1)^\chi \SW(a)$ for all SW basic classes $a$ \eqref{SWeqn}.
\end{itemize}
The term involving $\Theta_{A_2^{\vee},(0,1)}(q^{\frac{1}{6}},y)$ in LHS of \eqref{modref} map to the term involving $\Theta_{A_2,(1,0)}(q^{\frac{1}{2}},y)$ in RHS of \eqref{modref} (and vice versa). 
\end{proof}

\section{Consequences} \label{conseq}

In this section, we discuss some consequences of our conjectures. In particular, we specialize to the following settings:
\begin{itemize}
\item $S$ satisfies $H_1(S,\Z)=0$, $p_g>0$ and only has SW basic classes $0$, $K \neq 0$,
\item $|K|$ contains a reduced curve with irreducible connected components,
\item $\widetilde{S}$ is the blow-up of $S$ in one point.
\end{itemize}

\subsection{Minimal surfaces of general type}

Let $S$ be a smooth projective surface such that $H_1(S,\Z)=0$, $p_g>0$, and its only SW basic classes are $0$ and $K \neq 0$. Then $\SW(0)=0$ and $\SW(K) = (-1)^{\chi}$. Prominent examples are minimal surfaces of general type satisfying $H_1(S,\Z)=0$ and $p_g>0$. In this case, the conjectural formulae for $\sfZ_{S,H,2,c_1}(q,y)$ and $\sfZ_{S,H,3,c_1}(q,y)$ from the introduction simplify as follows (recall that $i = \sqrt{-1}$ and $\epsilon = e^{\frac{2 \pi i}{3}}$):

{{\scriptsize{
\begin{align*}
\frac{\sfZ_{S,H,2,c_1}^{\inst}(q,y)}{(y^{\frac{1}{2}} - y^{-\frac{1}{2}})^{\chi}} =&\, 2\Bigg( \frac{1}{2\phi_{-2,1}(q^{\frac{1}{2}},y)^{\frac{1}{2}} \Delta(q^\frac{1}{2})^{\frac{1}{2}}} \Bigg)^{\chi} \Bigg\{ \Bigg( \frac{\theta_3(q,y)+\theta_2(q,y)}{2\eta(q)^2}\Bigg)^{-K^2} + (-1)^{c_1^2-\chi} \Bigg(\frac{\theta_3(q,y)-\theta_2(q,y)}{2\eta(q)^2}\Bigg)^{-K^2} \Bigg\} \\
&\!\!\!\!\!\!\!\!\!\!\!\!\!\!\!\!\!\!\!\!\!\!\!\!\!\!\!\!\!\!\!\!\!\!\!\!\!\!\!\!\!\!\!\!\!+2 (-1)^{\chi} i^{-c_1^2} \Bigg( \frac{1}{2\phi_{-2,1}(-q^{\frac{1}{2}},y)^{\frac{1}{2}} \Delta(-q^\frac{1}{2})^{\frac{1}{2}}} \Bigg)^{\chi}  \Bigg\{ \Bigg( \frac{\theta_3(q,y)-i\theta_2(q,y)}{2\eta(q)^2}\Bigg)^{-K^2} + (-1)^{c_1^2-\chi} \Bigg(\frac{\theta_3(q,y)+i\theta_2(q,y)}{2\eta(q)^2}\Bigg)^{-K^2} \Bigg\}, \\
\frac{\sfZ_{S,H,2,c_1}^{\mono}(q,y)}{(y^{\frac{1}{2}} - y^{-\frac{1}{2}})^{\chi}} =& \Bigg( \frac{1}{\phi_{-2,1}(q^2,y^2)^{\frac{1}{2}} \Delta(q^2)^{\frac{1}{2}}} \Bigg)^{\chi} \Bigg\{ (-1)^\chi \delta_{c_1,0} \Bigg( \frac{\theta_3(q,y)}{\eta(q)^2} \Bigg)^{-K^2} + \delta_{c_1,K} \Bigg( \frac{\theta_2(q,y)}{\eta(q)^2} \Bigg)^{-K^2} \Bigg\}, \\
\frac{\sfZ_{S,H,3,c_1}^{\inst}(q,y)}{(y^{\frac{1}{2}} - y^{-\frac{1}{2}})^{\chi}} =&\, 3\Bigg( \frac{1}{3\phi_{-2,1}(q^{\frac{1}{3}},y)^{\frac{1}{2}}  \Delta(q^{\frac{1}{3}})^{\frac{1}{2}} } \Bigg)^{\chi} \Bigg(\frac{\Theta_{A_2^\vee,(0,1)}(q^{\frac{1}{6}},y)}{3\eta(q)^3}  \Bigg)^{-K^2} \\
&\qquad\qquad \times \Bigg\{  Z_+(q^{\frac{1}{6}},y)^{K^2} + Z_-(q^{\frac{1}{6}},y)^{K^2}  + (-1)^\chi (\epsilon^{c_1K} + \epsilon^{-c_1K}) \Bigg\} \\
&+3\epsilon^{2c_1^2} \Bigg( \frac{1}{3\phi_{-2,1}(\epsilon^2 q^{\frac{1}{3}},y)^{\frac{1}{2}}  \Delta(\epsilon^2 q^{\frac{1}{3}})^{\frac{1}{2}} } \Bigg)^{\chi} \Bigg(\frac{\Theta_{A_2^\vee,(0,1)}(\epsilon q^{\frac{1}{6}},y)}{3\eta(q)^3}  \Bigg)^{-K^2} \\
&\qquad\qquad \times \Bigg\{  Z_+(\epsilon q^{\frac{1}{6}},y)^{K^2} + Z_-(\epsilon q^{\frac{1}{6}},y)^{K^2}  + (-1)^\chi (\epsilon^{c_1K} + \epsilon^{-c_1K}) \Bigg\} \\
&+3(-1)^\chi \epsilon^{c_1^2} \Bigg( \frac{1}{3\phi_{-2,1}(\epsilon q^{\frac{1}{3}},y)^{\frac{1}{2}}  \Delta(\epsilon q^{\frac{1}{3}})^{\frac{1}{2}} } \Bigg)^{\chi} \Bigg(\frac{\Theta_{A_2^\vee,(0,1)}(\epsilon^2 q^{\frac{1}{6}},y)}{3\eta(q)^3}  \Bigg)^{-K^2} \\
&\qquad\qquad \times \Bigg\{  Z_+(\epsilon^2 q^{\frac{1}{6}},y)^{K^2} + Z_-(\epsilon^2 q^{\frac{1}{6}},y)^{K^2}  + (-1)^\chi (\epsilon^{c_1K} + \epsilon^{-c_1K}) \Bigg\}, \\
\frac{\sfZ_{S,H,3,c_1}^{\mono}(q,y)}{(y^{\frac{1}{2}} - y^{-\frac{1}{2}})^{\chi}} =& \Bigg( \frac{1}{\phi_{-2,1}(q^3,y^3)^{\frac{1}{2}}  \Delta(q^3)^{\frac{1}{2}} } \Bigg)^{\chi} \Bigg(\frac{\Theta_{A_2,(1,0)}(q^{\frac{1}{2}},y)}{\eta(q)^3}  \Bigg)^{-K^2}  \\
&\qquad\qquad \times \Bigg\{ \delta_{c_1,0} \big(W_+(q^{\frac{1}{2}},y)^{K^2} + W_-(q^{\frac{1}{2}},y)^{K^2}\big)  + (-1)^\chi \big( \delta_{c_1,K} + \delta_{c_1,-K} \big) \Bigg\}.
\end{align*}
}}

\subsection{Disconnected canonical divisor} 

Let $S$ be a smooth projective surface such that $b_1=0$, $p_g>0$, and $|K|$ contains a reduced curve with irreducible connected components $C_1, \ldots, C_m$. E.g.~elliptic surfaces over $\PP^1$ of type $E(n)$ with $n \geq 3$, which have $12n$ rational 1-nodal fibres, a section, and no further singular fibres. Then $|K| = |(n-2) F|$, where $F$ is the fibre class. 

For any $I \subset M:=\{1,\ldots, m\}$, define $C_I := \sum_{i \in I} C_i$ and write $I \sim J$ whenever $C_I$ and $C_J$ are linearly equivalent. In \cite[Lem.~6.14]{GK1} we prove that the Seiberg-Witten basic classes are $\{C_I\}_{I \subset M}$ and
\begin{equation} \label{SWdisconn}
\SW(C_I) = |[I]| \prod_{i \in I} (-1)^{h^0(N_{C_i/S})},
\end{equation}
where $|[I]|$ denotes the number of elements of equivalence class $[I]$ and $N_{C_i / S}$ denotes the normal bundle of $C_i \subset S$. 

Suppose $H,c_1$ are chosen such that there are no rank 3 strictly Gieseker $H$-semistable sheaves on $S$ with first Chern class $c_1$. Let
$$
\sfZ_{S,H,3,c_1}^{\inst}(x,y) := \sum_{c_2} \overline{\chi}_{-y}^{\vir}(M_S^H(3,c_1,c_2)) \, x^{\vd},
$$
where $\vd$ is given by \eqref{vd}. Then Conjecture \ref{conj1} applied to $S$ gives
{\scriptsize{
\begin{align*}
\sfZ_{S,H,3,c_1}^{\inst}(x,y) =&\, 3\Bigg( \frac{1}{3 \prod_{n=1}^{\infty}(1-x^{2n})^{10}(1-x^{2n}y)(1-x^{2n}y^{-1})} \Bigg)^{\chi} \Bigg(\frac{\Theta_{A_2^\vee,(0,1)}(x,y)}{3\overline{\eta}(x^6)^3}  \Bigg)^{-K^2} \\
&\qquad\qquad \times \prod_{j=1}^{m} \Bigg( Z_+(x,y)^{C_j^2} + Z_-(x,y)^{C_j^2} + (-1)^{h^0(N_{C_j/S})}(\epsilon^{c_1C_j} + \epsilon^{-c_1C_j}) \Bigg) \\
&+3\epsilon^{2c_1^2+2\chi} \Bigg( \frac{1}{3 \prod_{n=1}^{\infty}(1-\epsilon^{2n} x^{2n})^{10}(1- \epsilon^{2n} x^{2n}y)(1-\epsilon^{2n} x^{2n}y^{-1})} \Bigg)^{\chi} \Bigg(\frac{\Theta_{A_2^\vee,(0,1)}(\epsilon x,y)}{3\overline{\eta}(x^6)^3}  \Bigg)^{-K^2} \\
&\qquad\qquad \times \prod_{j=1}^{m} \Bigg( Z_+(\epsilon x,y)^{C_j^2} + Z_-(\epsilon x,y)^{C_j^2} + (-1)^{h^0(N_{C_j/S})}(\epsilon^{c_1C_j} + \epsilon^{-c_1C_j}) \Bigg) \\
&+3\epsilon^{c_1^2+\chi} \Bigg( \frac{1}{3 \prod_{n=1}^{\infty}(1-\epsilon^{n} x^{2n})^{10}(1- \epsilon^{n} x^{2n}y)(1-\epsilon^{n} x^{2n}y^{-1})} \Bigg)^{\chi} \Bigg(\frac{\Theta_{A_2^\vee,(0,1)}(\epsilon^2 x,y)}{3\overline{\eta}(x^6)^3}  \Bigg)^{-K^2} \\
&\qquad\qquad \times \prod_{j=1}^{m} \Bigg( Z_+(\epsilon^2 x,y)^{C_j^2} + Z_-(\epsilon^2 x,y)^{C_j^2} + (-1)^{h^0(N_{C_j/S})}(\epsilon^{c_1C_j} + \epsilon^{-c_1C_j}) \Bigg).
\end{align*}
}}
The rank 2 analog of this formula is given in \cite[Prop.~6.11]{GK1}.

\begin{proof}[Proof of formula]
As in the proof of Corollary \ref{corconj1}, we denote the formula of Conjecture \ref{conj1} by $\psi_{S,c_1}(x,y) = \sum_{n \geq 0} \psi_n(y) \, x^n$. Then
$$
\sfZ_{S,H,3,c_1}^{\inst}(x,y) = \sum_{n \equiv -2c_1^2 - 8\chi \mod 3} \psi_n(y) \, x^{n} = \sum_{k=0}^{2} \frac{1}{3} \epsilon^{k(2c_1^2+ 8\chi)} \psi_{S,c_1}(\epsilon^k x,y).
$$
Therefore, it suffices to calculate $\psi_{S,c_1}(x,y)$, which equals 
\begin{align*}
&9\Bigg( \frac{1}{3 \prod_{n=1}^{\infty}(1-x^{2n})^{10}(1-x^{2n}y)(1-x^{2n}y^{-1})} \Bigg)^{\chi} \Bigg(\frac{\Theta_{A_2^\vee,(0,1)}(x,y)}{3\overline{\eta}(x^6)^3}  \Bigg)^{-K^2} \\
&\times \sum_{a,b} \SW(a) \SW(b) \epsilon^{(a-b) c_1} Z_+(x,y)^{ab} Z_-(x,y)^{(K-a)(K-b)}.
\end{align*}
Using \eqref{SWdisconn}, the second line becomes
$$
\sum_{I,J} (-1)^{h^0(N_{C_I / S}) + h^0(N_{C_J / S})} \epsilon^{(C_I - C_J)c_1} Z_+(x,y)^{C_I C_J} Z_-(x,y)^{(K-C_I)(K-C_J)},
$$
where the sum runs over all pairs $I,J \subset M$. Writing $I_1 := I \cap J$, $I_2 = I \setminus J$, $I_3 = J \setminus I$, $I_4 = M \setminus (I \cup J)$ and using $K = C_M$, this can be rewritten as
$$
\sum_{I_1 \sqcup I_2 \sqcup I_3 \sqcup I_4 = M} (-1)^{h^0(N_{C_{I_2} / S}) + h^0(N_{C_{I_3} / S})} \epsilon^{(C_{I_2} - C_{I_3})c_1} Z_+(x,y)^{C_{I_1}^2} Z_-(x,y)^{C_{I_4}^2},
$$
where $\sqcup$ stands for disjoint union, from which the result follows.
\end{proof}

\subsection{Blow-ups}

Let $\pi : \widetilde{S} \rightarrow S$ be the blow-up in a point of a smooth projective surface $S$ satisfying $b_1=0$ and $p_g>0$. Let $H,c_1$ be chosen such that there are no rank 3 strictly Gieseker $H$-semistable sheaves on $S$ with first Chern class $c_1$. Furthermore, let
$$
\widetilde{c}_1 = \pi^* c_1 - \ell E,
$$
where $E$ denotes the exceptional divisor and $\ell=0,1,2$. Suppose $\widetilde{H}$ is a polarization on $\widetilde{S}$ such that there are no rank 3 strictly Gieseker $\widetilde{H}$-semistable sheaves on $\widetilde{S}$ with first Chern class $\widetilde{c}_1$. As in the previous section, we consider $\sfZ_{S,H,3,c_1}^{\inst}(x,y)$, $\sfZ_{\widetilde{S},\widetilde{H},3,\widetilde{c}_1}^{\inst}(x,y)$. Conjecture \ref{conj1} applied to $S$, $\widetilde{S}$ gives
\begin{equation} \label{blowupcases}
\sfZ_{\widetilde{S},\widetilde{H},3,\widetilde{c}_1}^{\inst}(x,y) = \left\{\begin{array}{cc} \frac{\Theta_{A_2,(0,0)}(x^3,y)}{\overline{\eta}(x^6)^3} \, \sfZ_{S,H,3,c_1}^{\inst}(x,y) & \mathrm{if \ } \ell=0 \\ \frac{\Theta_{A_2,(1,0)}(x^3,y)}{\overline{\eta}(x^6)^3} \, \sfZ_{S,H,3,c_1}^{\inst}(x,y) & \mathrm{if \ } \ell=1,2.
\end{array}\right.
\end{equation}
Specializing to $y=1$ gives a blow-up formula for virtual Euler characteristics. Surprisingly, the latter coincides with the blow-up formula for topological Euler characteristics \cite[Prop.~3.1]{Got}. The rank 2 analog of \eqref{blowupcases} is \cite[Prop.~6.9]{GK1}.

\begin{proof}[Proof of formula]
Using the same notation as in the previous section, we have
$$
\sfZ_{\widetilde{S},\widetilde{H},3,\widetilde{c}_1}^{\inst}(x,y) = \sum_{k=0}^{2} \frac{1}{3} \epsilon^{k(2\widetilde{c}_1^2+ 8\chi(\O_{\widetilde{S}}))} \psi_{\widetilde{S},\widetilde{c}_1}(\epsilon^k x,y),
$$
where $\widetilde{c}_1^2 = c_1^2 - \ell^2$ and $\chi(\O_{\widetilde{S}}) = \chi(\O_S)$. We calculate $\psi_{\widetilde{S},\widetilde{c}_1}(x,y)$. The Seiberg-Witten basic classes of $\widetilde{S}$ are $\pi^*a$, $\pi^* a + E$, where $a$ runs over all Seiberg-Witten basic classes of $S$, and \cite[Thm.~7.4.6]{Mor}
\begin{equation} \label{SWblowup}
\SW(\pi^*a) = \SW(\pi^*a+E) = \SW(a).
\end{equation}
Conjecture \ref{conj1}, $\chi(\O_{\widetilde{S}}) = \chi(\O_S)$, $K_{\widetilde{S}} = K_S + E$, and \eqref{SWblowup} together imply
\begin{align*}
\psi_{\widetilde{S},\widetilde{c}_1}(x,y) &= \Bigg( \frac{\Theta_{A_2^\vee,(0,1)}(x,y)}{3 \overline{\eta}(x^6)^3} \Bigg) \Big[ Z_+(x,y)^{-1} + Z_-(x,y)^{-1} + \epsilon^k + \epsilon^{-k} \Big] \psi_{S,c_1}(x,y) \\
&= \Bigg( \frac{\Theta_{A_2^\vee,(0,0)}(x,y) +  (\epsilon^k + \epsilon^{-k})\Theta_{A_2^\vee,(0,1)}(x,y) }{3 \overline{\eta}(x^6)^3} \Bigg) \psi_{S,c_1}(x,y),
\end{align*}
where the second equality uses
\begin{align*}
Z_+(x,y)^{-1} + Z_-(x,y)^{-1} &= Z(x,y) = \frac{\Theta_{A_2^\vee,(0,0)}(x,y) }{\Theta_{A_2^\vee,(0,1)}(x,y)},
\end{align*}
which follows from the definition of $Z, Z_{\pm}$. The rest of the proof follows by splitting up the cases $\ell=0,1,2$ and some rewriting using Lemma \ref{thetalemma}.
\end{proof}

{\tt{gottsche@ictp.it, m.kool1@uu.nl}}
\end{document}